\crefname{theorem}{Theorem}{Theorems}
\crefname{thm}{Theorem}{Theorems}
\crefname{mainthm}{Theorem}{Theorems}
\crefname{conj}{Conjecture}{Theorems}
\crefname{lemma}{Lemma}{Lemmas}
\crefname{lem}{Lemma}{Lemmas}
\crefname{remark}{Remark}{Remarks}
\crefname{prop}{Proposition}{Propositions}
\crefname{defn}{Definition}{Definitions}
\crefname{corollary}{Corollary}{Corollaries}
\crefname{cor}{Corollary}{Corollaries}
\crefname{section}{Section}{Sections}
\crefname{figure}{Figure}{Figures}
\crefname{quest}{Question}{Questions}
\newcommand{\N}{\mathbb{N}}
\newcommand{\Z}{\mathbb{Z}}
\newcommand{\R}{\mathbb{R}}
\newcommand{\B}{\mathcal{B}}
\newcommand{\CC}{\mathsf{C}}
\newcommand{\T}{\mathbb{T}}
\newcommand{\bR}{\mathbf{R}}
\newcommand{\ignore}[1]{}
\newtheorem{thm}{Theorem}[section]
\newtheorem{lemma}[thm]{Lemma}
\newtheorem{prop}[thm]{Proposition}
\newtheorem{cor}[thm]{Corollary}
\newtheorem{claim}[thm]{Claim}
\theoremstyle{definition}
\newtheorem{remark}[thm]{Remark}
\newtheorem*{thm*}{Theorem}
\newtheorem*{prop*}{Proposition}
\newtheorem*{Prob*}{Problem}
\newif\ifdraft\drafttrue
\begin{document}

\title[Multidimensional LCLT in deterministic systems]{Multidimensional local limit theorem in deterministic systems and an application to non-convergence of polynomial multiple averages}

\author{Zemer Kosloff}
\address{Einstein Institute of Mathematics, Hebrew University of Jerusalem, Edmond J. Safra Campus, Jerusalem, 9190401, Israel.}
\email{zemer.kosloff@mail.huji.ac.il}

\author{Shrey Sanadhya}
\address{Einstein Institute of Mathematics, Hebrew University of Jerusalem, Edmond J. Safra Campus, Jerusalem, 9190401, Israel.}
\email{shrey.sanadhya@mail.huji.ac.il}

\subjclass[2020]{28D05, 37A05, 37A50, 37A30, 60F05, 60G10}
\keywords{Local central limit theorem; Polynomial multiple convergence; Zero entropy stationary process}

\begin{abstract}
We show that for every ergodic and aperiodic probability preserving system $(X,\B,m,T)$, there exists $f:X\to \mathbb{Z}^d$, whose corresponding cocycle satisfies the $d$-dimensional local central limit theorem. 

We use the $2$-dimensional result to resolve a question of Huang, Shao and Ye and Franzikinakis and Host regarding non-convergence in $L^2$ of polynomial multiple averages of non-commuting zero entropy transformations. Our methods also give the first examples of failure of multiple recurrence for zero entropy transformations along polynomial iterates. 
\end{abstract}

\maketitle

\section{Introduction}

Given a probability preserving transformation $(X,\B,m,T)$, and a function $g:X\to\mathbb{R}$, its \textbf{sum process} is defined by $S_n(g):=\sum_{k=0}^{n-1}g\circ T^k$, $n\in\mathbb{N}$. Similarly if $f:X\to\mathbb{Z}^d$ is a $d$-dimensional map given by $f (x) = (f^{(1)} (x),\ldots, f^{(d)} (x)$ for $x \in X$, then $f\circ T (x)= (f^{(1)}\circ T(x),\ldots, f^{(d)}\circ T(x))$ and the sum process represents component wise summation. In other words for $n\in\mathbb{N}$,
\[
S_n(f):=\sum_{k=0}^{n-1}f\circ T^k = \sum_{k=0}^{n-1} (f^{(1)}\circ T^k,\ldots, f^{(d)}\circ T^k).
\]
We will also refer to $S_n(f)$ as the cocycle corresponding to $f$. 

In \cite{MR891642}, Burton and Denker proved the following surprising result: For every $(X,\B,m,T)$ an aperiodic and probability preserving system there exists a square integrable function whose corresponding cocycle satisfies a non-degenerate central limit theorem. In \cite{MR1624218},  Voln\'y proved the existence of a function $f$ whose corresponding cocycle  satisfies the central limit theorem and its corresponding sum process converges to a non-degenerate Brownian motion. In particular the variance of $S_n(f)$ grows linearly. For a comprehensive history of such results we refer the reader to the introduction of \cite{MR1624218} and \cite{MR4374685}.

In \cite{MR4374685}, it was shown by the first author and Voln\'y that for every ergodic and aperiodic probability preserving system $(X,\B,m,T)$, there exists a $\Z$ valued function whose corresponding cocycle satisfies a lattice local central limit theorem. In the first part of this work, we prove the following $d$-dimensional lattice local central limit theorem. 

\begin{thm}\label{thm:LCLT}
Let $(X,\B,m,T)$ be an ergodic and aperiodic probability preserving transformation. There exists a square integrable function $f:X\to\mathbb{Z}^d$ with $\int_X fdm = 0$, such that
\[
\sup_{x\in\Z^d} n^{d/2}\left|m\left(S_n(f)=x\right)-\frac{1}{(2\pi n\sigma^2)^{d/2}}e^{-\frac{\|x\|^2}{2n\sigma^2}}\right|\xrightarrow[n\to\infty]{}0,
\]
where $\sigma^2=2 (\ln 2)^2$.
\end{thm}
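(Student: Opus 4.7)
My plan is to reduce the $d$-dimensional LCLT to the one-dimensional construction of Kosloff--Voln\'y \cite{MR4374685} by producing $d$ coordinate functions whose contributions come from disjoint portions of a sequence of Rokhlin towers, so that their joint behavior approximates that of $d$ independent copies of the 1D construction. The argument would then follow the classical Fourier-analytic route: control the joint characteristic function and invert.

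For the construction, I would use aperiodicity and ergodicity of $T$ together with Rokhlin's lemma to obtain a sequence of towers with base $B_k$ and height $N_k \to \infty$ exhausting $X$ in measure. Within the $k$-th tower, I would partition the $N_k$ levels into $d$ consecutive slabs $L_{k,1},\dots,L_{k,d}$ of roughly equal size $N_k/d$. On the slab $L_{k,i}$ I would place, only in the $i$-th coordinate, a suitably rescaled copy of the 1D Kosloff--Voln\'y building block of height $N_k/d$, setting the other coordinates to zero on that slab. Summing these building blocks across $k$ with coefficients calibrated so that each coordinate has asymptotic per-step variance $2(\ln 2)^2$ would give a square-integrable $f:X\to\Z^d$ with zero mean. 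The key feature of the slab construction is that for a typical $x$, the times $0\le k<n$ at which the orbit contributes to distinct coordinates of $S_n(f)$ lie in disjoint subintervals of $\{0,1,\dots,n-1\}$, up to an error controlled by the tower defect.

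To derive the LCLT, I would analyze the joint characteristic function $\phi_n(t) = \int_X e^{i\langle t, S_n(f)\rangle}\,dm$ for $t\in[-\pi,\pi]^d$. The disjoint-time structure should yield an approximate factorization $\phi_n(t)\approx \prod_{i=1}^d \phi_n^{(i)}(t_i)$, where $\phi_n^{(i)}$ is the characteristic function of $S_n(f^{(i)})$. For each coordinate I would invoke the 1D results of \cite{MR4374685} to obtain both a Gaussian approximation $\phi_n^{(i)}(t_i)\approx e^{-n\sigma^2 t_i^2/2}$ for $t_i$ near $0$ and a strict decay estimate for $t_i$ bounded away from $0$. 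Inserting these into the Fourier inversion formula
\[
m(S_n(f)=x) \;=\; \frac{1}{(2\pi)^d}\int_{[-\pi,\pi]^d} e^{-i\langle t,x\rangle}\,\phi_n(t)\,dt,
\]
comparing with the corresponding Gaussian expression, and splitting the integration into a neighborhood of the origin and its complement, would produce the uniform bound claimed in the theorem.

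The main technical obstacle will be establishing the factorization $\phi_n(t)\approx \prod_i \phi_n^{(i)}(t_i)$ with an error small enough that, after multiplication by $n^{d/2}$ from the Fourier inversion, it still tends to zero uniformly in $x\in\Z^d$. This requires controlling cross-coordinate correlations at frequencies $t$ whose components live on different scales (some small, some of order $1$) and showing that the building blocks for distinct coordinates, though defined on the same orbit, are sufficiently decorrelated by the disjoint-slab structure. Matching the exact variance $\sigma^2=2(\ln 2)^2$ in each coordinate, as in the 1D case, would require a careful choice of the coefficients multiplying the $k$-th building block, designed so that the resulting series reproduces the $(\ln 2)^2$ constant emerging from the (dyadic) tower structure.
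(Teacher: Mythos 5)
The overall architecture---reduce to $d$ (nearly) independent copies of the one-dimensional Kosloff--Voln\'y construction and conclude from the 1D LCLT---is the right one, but there is a genuine gap exactly at the point you flag as the main obstacle: the factorization $\phi_n(t)\approx\prod_{i=1}^d\phi_n^{(i)}(t_i)$. Placing the building blocks for distinct coordinates on disjoint slabs of a Rokhlin tower does not decorrelate them. Disjointness of the times at which the orbit ``activates'' each coordinate says nothing about the joint distribution of the resulting sums: in a general ergodic aperiodic system the values of a function along disjoint blocks of the same orbit can be completely dependent (think of a rotation factor), so no approximate independence follows from the slab geometry alone. The quantitative demand is also severe: after Fourier inversion you would need $\int_{[-\pi,\pi]^d}\bigl|\phi_n(t)-\prod_i\phi_n^{(i)}(t_i)\bigr|\,dt=o\bigl(n^{-d/2}\bigr)$ to survive the multiplication by $n^{d/2}$, and nothing in your construction produces such a bound.

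The paper makes the independence exact rather than approximate, which removes the need for any multidimensional Fourier analysis. Using the copying lemma (\cite[Proposition 2]{MR4374685}, restated as \cref{prop:copying_lemma}) it produces at each scale $k$ a single function $g_k$ such that $(g_k\circ T^j)_{j=0}^{D(2d_k+p_k)-1}$ is i.i.d.\ and independent of all earlier scales, and defines the $i$-th coordinate block as $\bar f_k^{(i)}=U^{(i-1)(2d_k+p_k)}g_k$; your ``disjoint slabs'' must be disjoint segments of a genuinely i.i.d.\ sequence supplied by the copying lemma, not of a raw orbit. With this, the dominant part $\mathsf{U}(F,n)$ of $S_n(F)$ has exactly independent coordinates $\mathsf{U}(f^{(i)},n)$, each satisfying the 1D LCLT, so $m(\mathsf{U}(F,n)=x)$ factorizes exactly and the $d$-dimensional Gaussian asymptotic is just a product of one-dimensional ones; the remainder $S_n(F)-\mathsf{U}(F,n)$ is independent of $\mathsf{U}(F,n)$ with $\|S_n(F)-\mathsf{U}(F,n)\|_2^2=O(n/\sqrt{\log n})$, and the perturbation lemma \cref{prop:bla} transfers the LCLT to $S_n(F)$. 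To repair your argument you would either have to adopt this copying-lemma construction or prove the $o(n^{-d/2})$ decorrelation estimate directly, which in effect amounts to the same thing; the variance $2(\ln 2)^2$ then comes for free because each coordinate is an exact copy of the 1D construction, with no extra calibration of coefficients.
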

The local limit theorem of \cite{MR4374685}, is the $d=1$ case of \cref{thm:LCLT}. 

Given $S,T$, two measure preserving transformations of the probability space $(X,\mathcal{B},m)$ and $f,g\in L^2(X,m)$ the corresponding double averages are defined by
\[
\mathbb{A}_n(f,g):=\frac{1}{N}\sum_{n=0}^{N-1}f\circ T^{n}g\circ S^{n}. 
\]
When $S$ and $T$ commute, Conze and Lesigne \cite{MR788966} proved that for all $f,g\in L^\infty(m)$, $\mathbb{A}_n(f,g)$ converges in $L^2$ as $n\to\infty$. This was extended by Bergelson and Leibman \cite{Bergelson_Leibman_2002} to the case where $T$ and $S$ generate a nilpotent group. Given $p_1$ and $p_2$, two integer polynomials one can ask whether the corresponding polynomial double averages 
\begin{equation}\label{eq:polymial_averages}
\frac{1}{N}\sum_{n=0}^{N-1}f\circ T^{p_1(n)} g\circ S^{p_2(n)} 
\end{equation}
converge in $L^2$ for every $f,g\in L^\infty(X,m)$. Walsh \cite{MR2912715} showed that \eqref{eq:polymial_averages} converges in $L^2$ when $T$ and $S$ generate a nilpotent group. When $S$ and $T$ have positive entropy, the limit in \eqref{eq:polymial_averages} may not exist for certain bounded functions, see for example \cite[Proposition 1.4]{MR4585298}.

The following $L^2$ convergence result without commutativity was proved by Frantzikinakis and Host. 
\begin{thm*}\cite[Theorem 1.1]{MR4585298}
Let $T,S$ be measure preserving transformations acting on a probability space $(X,\B,\mu)$ such that the system $(X,\B,\mu,T)$ has zero entropy. Let also $p\in\Z[t]$, an integer polynomial of degree greater or equal to $2$. Then for every $f,g\in L^\infty(m)$, the limit 
\[
\lim_{N\to\infty}\frac{1}{N}\sum_{n=0}^{N-1}f\circ T^n g\circ S^{p(n)}
\]
exists in $L^2(\mu)$.
\end{thm*}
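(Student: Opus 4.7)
The plan is to prove $L^2$ convergence via the Hilbert-space van der Corput inequality, reducing the problem to correlations that can be handled using the zero-entropy hypothesis on $T$. First, by a standard density argument it suffices to prove that $A_N = \frac{1}{N}\sum_{n=0}^{N-1} f\circ T^n\cdot g\circ S^{p(n)}$ is Cauchy in $L^2(\mu)$. Applying van der Corput to the sequence $a_n = f\circ T^n\cdot g\circ S^{p(n)}$, I would bound $\limsup_N \|A_N\|_2^2$ by a Ces\`aro average over $h$ of the $n$-averages of
\[
\gamma_h(n) = \int f\circ T^n\cdot \bar f\circ T^{n+h}\cdot g\circ S^{p(n)}\cdot \bar g\circ S^{p(n+h)}\,d\mu.
\]
Performing the change of variables by $T^{-n}$ (using $T$-invariance of $\mu$) and writing $p(n+h)=p(n)+q_h(n)$ with $\deg q_h = \deg p - 1 \geq 1$, this rewrites as
\[
\gamma_h(n) = \int (f\cdot\bar f\circ T^h)\cdot \bigl((g\cdot\bar g\circ S^{q_h(n)})\circ S^{p(n)}T^{-n}\bigr)\,d\mu.
\]

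The main work is then to analyze these polynomial correlations in the mixed $(T,S)$-action. I would iterate the van der Corput step (following Bergelson's PET scheme), successively decreasing the degree of the polynomial controlling the $S$-iterates, until only linear or constant correlations remain. At each step the zero entropy of $T$ is used to argue that the characteristic factor on the $T$-side is trivial: since $T$ has zero entropy, for each fixed $h$ the coefficient $f\cdot\bar f\circ T^h$ can be projected onto a Kronecker-type factor of $T$, and the orthogonal, weakly mixing remainder contributes a negligible average upon further van der Corput iterations. For the $g$-piece at the base of the iteration, a standard single-transformation polynomial mean ergodic theorem gives the needed limit.

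The main obstacle is the non-commutativity of $T$ and $S$: every van der Corput iteration introduces new compositions $S^{p(n)}T^{-n}$ that do not decouple, so Walsh's theorem cannot be applied to the two actions separately. The technical heart of the proof is to show that, up to controllable errors, the zero entropy of $T$ forces the relevant $T$-side characteristic factor to be degenerate, reducing each step to a single-transformation polynomial ergodic problem in $S$. The hypothesis $\deg p \geq 2$ is crucial, as it ensures $q_h$ is non-constant in $n$, producing genuine polynomial averages that can be flattened by further PET iterations — an advantage that is absent when $\deg p = 1$.
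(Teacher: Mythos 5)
First, a point of order: the paper does not prove this statement at all --- it is quoted verbatim as \cite[Theorem 1.1]{MR4585298} (Frantzikinakis--Host) and used as background, so there is no internal proof to compare your attempt against. Judged on its own terms, your sketch opens with the right moves, which do reflect the genuine structure of the problem: van der Corput applied to $u_n=f\circ T^n\cdot g\circ S^{p(n)}$, followed by the change of variables $x\mapsto T^{-n}x$, which exploits the \emph{linearity} of the $T$-iterate to freeze the $T$-coefficient as $f\cdot\bar f\circ T^h$, leaving $q_h(n)=p(n+h)-p(n)$ of degree $\deg p-1\geq 1$ on the $S$-side. (One logical slip already here: van der Corput bounds $\limsup_N\|A_N\|_2$, which does not yield the Cauchy property; one must either apply it to differences $A_N-A_M$ or, as is standard, first decompose $g$ into a structured part plus a uniform part and use van der Corput only to annihilate the uniform part.)

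The real gap is in the two steps that carry all the weight, both of which are asserted rather than proved, and the first of which is false as stated. (1) ``Since $T$ has zero entropy, $f\cdot\bar f\circ T^h$ can be projected onto a Kronecker-type factor of $T$, and the weakly mixing remainder contributes negligibly'' has no basis: zero entropy does not make the Kronecker factor of $T$ characteristic for anything in this problem. Take $T$ zero-entropy and weakly mixing (Chacon's map, a horocycle time-one map); the Kronecker factor is trivial, the ``projection'' of $f\bar f\circ T^h$ is the constant $\int f\bar f\circ T^h\,d\mu$, and the ``remainder'' is essentially the whole function --- showing that it contributes negligibly \emph{is} the theorem, not a consequence of weak mixing. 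In the actual Frantzikinakis--Host argument the entropy hypothesis enters through an entirely different mechanism (non-correlation of deterministic sequences with the data generated by the polynomial $S$-orbit), not through any characteristic-factor statement for $T$. (2) The proposed PET iteration does not apply to the objects you have produced: after the change of variables the $n$-average involves functions moved by the compositions $S^{p(n)}T^{-n}$, and PET-type inductions require the iterates to lie in a commutative (or at least nilpotent) group generated by the transformations. Here $T$ and $S$ generate a group with no such structure, and further van der Corput steps produce longer non-commuting words rather than polynomials of lower degree; you correctly identify this as ``the main obstacle'' and then simply assert it can be overcome. That assertion is the content of the theorem --- indeed the delicacy of the dividing line is exactly what this paper and \cref{prop: Austin} establish by exhibiting non-convergence for nearby pairs of iterates --- so the sketch as written does not constitute a proof.
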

It was further showed in \cite[Proposition 1.4]{MR4585298} that the assumption that $T$ has zero entropy is essential. The following problem was posed by Frantzikinakis and Host.  

\begin{Prob*} \cite{MR4585298} Let $T,S$ be measure preserving transformations acting on a probability space $(X,\B,\mu)$ such that the system $(X,\B,\mu,T)$ has zero entropy. Does the $L^2$ convergence result holds when in place of the iterates $n,p(n)$ we use the pair of iterates $n,n$ or $n^2,n^3$? 

In general, does the result hold for pairs of iterates given by arbitrary polynomials $p_1,p_2 \in \Z[t]$ with $p_1(0)=p_2(0)=0$ and $\deg(p_1),\deg(p_2)\geq 2$?
\end{Prob*}
Huang, Shao and Ye proved the following result. 

\begin{thm*}\cite{Huang_Shao_Ye_2024} 
Let $p_1,p_2:\Z\to\Z$ be polynomials with $\deg (p_1),\deg (p_2)\geq 5$. For any $F\subset \N$ and $c\in \left(0,\frac{1}{2}\right)$, there exist $T,S$, two ergodic, measure preserving transformations of a standard probability space $(X,\mathcal{X},\mu)$ with $h_\mu(X,T)=h_\mu(X,S)=0$ and two measurable subsets $A_1,A_2\in\mathcal{X}$, and $M \in \N$ such that for all $n \geq M$
\[
\mu\left(A_1\cap T^{-p_1(n)}A_2\cap S^{-p_2(n)}A_2\right)=\begin{cases}
 0, & \text{if}\ n\in F,\\
 c, & \text{if}\ n\notin F.
\end{cases}
\]
As a consequence, there exist $T,S$ two ergodic, zero-entropy systems of a standard probability space $(X,\mathcal{X},\mu)$ and $A_2\in\mathcal{X}$ with $\mu(A_2)>0$ such that the averages 
\[
\frac{1}{N}\sum_{n=0}^{N-1} 1_{A_2}\circ T^{p_1(n)}1_{A_2}\circ S^{p_2(n)}
\]
do not converge in $L^2(\mu)$. 
\end{thm*}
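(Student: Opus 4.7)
The plan is to leverage \cref{thm:LCLT} in dimension $d=2$ to build $T,S$ as non-commuting zero-entropy skew products over a common aperiodic base, so that the triple intersection $\mu(A_1 \cap T^{-p_1(n)}A_2 \cap S^{-p_2(n)}A_2)$ becomes, up to an easy normalisation, the joint probability that two components of a single $\Z^2$-valued cocycle both vanish modulo a tunable integer $N$ at times $p_1(n)$ and $p_2(n)$.

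First I would take an ergodic, aperiodic, zero-entropy system $(Y,\nu,R)$ and apply \cref{thm:LCLT} with $d=2$ to obtain $f=(f_1,f_2):Y\to\Z^2$ with $\int f\, d\nu = 0$ whose sum process satisfies the isotropic $2$-dimensional lattice LCLT. Next I would form $X = Y \times (\Z/N\Z)^2$ with the product measure, for a parameter $N$ to be tuned later, and define
\begin{align*}
T(y,a,b) &= \bigl(Ry,\; (a + f_1(y))\bmod N,\; b\bigr),\\
S(y,a,b) &= \bigl(Ry,\; a,\; (b + f_2(y))\bmod N\bigr).
\end{align*}
Both are measure preserving, they do not commute (since $R$ is ergodic and $f$ nontrivial), and as finite group extensions of a zero-entropy base they are themselves zero-entropy. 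Taking $A_2 = Y\times\{(0,0)\}$ and $A_1 = X$ and unfolding the iterates yields
\[
\mu\bigl(A_1 \cap T^{-p_1(n)}A_2 \cap S^{-p_2(n)}A_2\bigr) = \tfrac{1}{N^2}\,\nu\bigl(S_{p_1(n)}(f_1)\equiv 0,\; S_{p_2(n)}(f_2) \equiv 0 \pmod N\bigr).
\]

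Assuming WLOG $p_1(n)\leq p_2(n)$, I would decompose $S_{p_2(n)}(f_2) = S_{p_1(n)}(f_2) + \bigl(S_{p_2(n)}(f_2) - S_{p_1(n)}(f_2)\bigr)$, apply \cref{thm:LCLT} to each block, and invoke the degree hypothesis $\deg p_i \geq 5$ to guarantee that $p_2(n) - p_1(n)$ grows fast enough for the two pieces to essentially decouple. This should identify the joint count on the right with a universal Gaussian lattice constant $c^\ast(N,\sigma^2)$. To upgrade this limit to the \emph{exact} values $0$ and $c$ at every $n\geq M$, I would superimpose a cut-and-stack modification of $(Y,R)$ along Rokhlin towers indexed by $n\in F$, preempting the event $\{S_{p_i(n)}(f_i)\equiv 0 \pmod N\}$ whenever $n\in F$ while preserving the LCLT on $F^{c}$, and then rescale $N$ and enlarge $A_1, A_2$ to match the prescribed $c\in(0,1/2)$. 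The $L^2$ non-convergence consequence follows by choosing $F$ and $F^{c}$ of positive upper density.

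The main obstacle is the passage from the single-time LCLT of \cref{thm:LCLT} to a joint LCLT at the two distinct polynomial time horizons $p_1(n), p_2(n)$; the degree restriction $\deg p_i\geq 5$ enters precisely here, as the threshold at which the block-decoupling argument becomes viable. The second delicate point is arranging the combinatorial modification along $F$ so that the triple intersection takes the exact values $0$ and $c$ for all $n\geq M$ while simultaneously preserving ergodicity, aperiodicity, zero entropy, and the LCLT asymptotics on $F^{c}$.
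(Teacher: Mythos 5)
This statement is quoted from Huang--Shao--Ye \cite{Huang_Shao_Ye_2024} and is not proved in this paper; the closest thing to compare against is the paper's own proof of \cref{thm:Main}, which strengthens the non-convergence consequence to $\deg(p_i)\geq 2$. Measured against either target, your proposal has two genuine gaps. First, the finite group extension $X=Y\times(\Z/N\Z)^2$ with $A_1=X$, $A_2=Y\times\{(0,0)\}$ cannot produce the dichotomy $0$ versus $c$. By the LCLT the quantity $\tfrac{1}{N^2}\nu\bigl(S_{p_1(n)}(f_1)\equiv 0,\ S_{p_2(n)}(f_2)\equiv 0 \pmod N\bigr)$ converges to a single constant as $n\to\infty$, so your unmodified construction gives \emph{convergent} averages; and to force the value exactly $0$ for $n\in F$ you would need $T^{-p_1(n)}A_2$ and $S^{-p_2(n)}A_2$ to be disjoint, which a $(\Z/N\Z)^2$ fiber cannot arrange once $A_1$ has full measure in the fiber direction. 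The entire content of the theorem is thus pushed into the sentence about a ``cut-and-stack modification along Rokhlin towers indexed by $n\in F$,'' for which you give no mechanism; it is not clear how such a modification could pin the measure to the exact values $0$ and $c$ for \emph{all} $n\geq M$ while preserving ergodicity, zero entropy, and the LCLT off $F$. Second, your decoupling step requires a joint local limit theorem for the pair $\bigl(S_{p_1(n)}(f_1),S_{p_2(n)}(f_2)\bigr)$ at two \emph{different} time horizons; \cref{thm:LCLT} only gives the joint law of the two components at a single common time, and the independence built into \cref{prop:functions_defn} is likewise a fixed-window statement, so this two-time LCLT does not follow from anything in the paper and would need its own proof.

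For contrast, the paper's proof of \cref{thm:Main} sidesteps both difficulties. The fiber is the full $\Z^2$-shift $\{0,1\}^{\Z^2}$ rather than a finite group, $S$ is defined as $Q^{-1}TQ$ where $Q$ flips the symbol of $\omega$ at the (almost surely distinct, by the range estimates of \cref{prop:Range_of_cocycle_along_p}) sites $S_{p_j(n)}(f)(y)$ for $n$ in a set $J$ of oscillating density; this makes the two events exactly anti-correlated (measure $0$) along $\mathcal{K}_y$ and essentially independent (measure $\geq\tfrac14$) off it, which is what drives the non-convergence. Crucially, the only analytic input is the single-time LCLT applied through the cocycle identity $S_{p(k)}(f)-S_{p(l)}(f)=S_{p(k)-p(l)}(f)\circ R_\alpha^{p(l)}$, so no two-time joint LCLT is ever needed. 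If you want to salvage your approach, you would need to replace the $(\Z/N\Z)^2$ fiber by a Bernoulli fiber with a conjugation of this flavor, at which point you have essentially reconstructed the paper's argument.
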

Huang, Shao and Ye conjecture (see \cite[Conjecture 1.2]{Huang_Shao_Ye_2024}) that for any two integer polynomials $p_1,p_2$ with $p_1(0)=p_2(0)=0$, the non-convergence result holds unless there exists an integer $c\neq 0$ such that $p_1(n)=cn$ and $p_2$ is a polynomial of degree $2$ or higher. Using the $2$-dimensional local limit theorem in \cref{thm:LCLT} we show the following.
 
\begin{thm}\label{thm:Main}
Let $p_1,p_2:\Z\to\Z$ be polynomials with $\deg (p_1),\deg (p_2) \geq 2$. There exist $T,S$ two ergodic measure preserving transformations of a standard probability space $(X,\mathcal{X},\mu)$, with $h_{\mu}(X,T) = h_{\mu}(X,S) = 0$ and $A\in\mathcal{X}$ with $\mu(A)>0$, such that the averages 
\[
\frac{1}{N}\sum_{n=0}^{N-1} 1_{A}\circ T^{p_1(n)}1_{A}\circ S^{p_2(n)}
\]
do not converge in $L^2(\mu)$. 
\end{thm}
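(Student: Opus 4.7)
The plan is to combine the $2$-dimensional local central limit theorem of \cref{thm:LCLT} with a tower-type construction to produce explicit non-commuting zero entropy transformations $T,S$ and a positive measure set $A$ whose double averages fail to converge in $L^2$. The approach refines the combinatorial strategy of \cite{Huang_Shao_Ye_2024}; the sharpness of the LCLT is what enables us to handle polynomial degrees as low as $2$ rather than $\geq 5$.

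First, apply \cref{thm:LCLT} with $d=2$ to obtain an ergodic, aperiodic, zero entropy probability preserving system $(Y,\B_Y,\nu,R)$ together with $f=(f_1,f_2):Y\to\Z^2$ whose cocycle sums $(S_n(f_1),S_n(f_2))$ satisfy the $2$-dimensional LCLT. The coordinates $f_1$ and $f_2$ will serve as two distinct $\Z$-valued cocycles driving $T$ and $S$. The next step is to build $T,S$ by extending $(Y,R)$ via a skew product or tower structure using $f_1,f_2$, arranging the construction so that zero entropy is preserved (automatic, since the cocycles are countable valued, via Abramov--Rokhlin), so that $T$ and $S$ are ergodic and do not generate a nilpotent group (to avoid Walsh's theorem \cite{MR2912715} ruling out $L^2$ non-convergence), and so that for a suitably chosen positive measure set $A$ the intersection $\mu\bigl(A\cap T^{-p_1(n)}A\cap S^{-p_2(n)}A\bigr)$ can be expressed in terms of the joint distribution of $\bigl(S_{p_1(n)}(f_1),S_{p_2(n)}(f_2)\bigr)$ restricted to appropriate tower events.

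With such a construction in place, the sharp $n^{-1}$ pointwise asymptotic of the $2$-dimensional LCLT provides leading order control of the intersection measures along polynomial time scales $p_1(n),p_2(n)$. Combined with a tower structure calibrated to these polynomial scales, this produces intersection measures taking two distinct approximate values along two different subsequences of $n$, and a subsequence argument on $N$ then shows that the Ces\`aro averages $\frac{1}{N}\sum_{n=0}^{N-1}\mu\bigl(A\cap T^{-p_1(n)}A\cap S^{-p_2(n)}A\bigr)$ oscillate between two positive values, yielding the required failure of $L^2$ convergence.

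The main obstacle is the careful design of $T,S$ and $A$ so that the sharp LCLT asymptotics translate into genuine oscillation. A pure skew product over a compact fiber, for instance, would lead via Fourier expansion and characteristic function decay (by the CLT for the cocycle) to convergence of the intersection measures to the product $\mu(A)^3$ rather than oscillation; the construction must therefore couple the LCLT cocycle $f$ with a tower structure adapted to the polynomial scales $p_1,p_2$ so that the Gaussian asymptotics can actually be exploited to separate the values of the intersection measures along different subsequences. The degree $\geq 2$ hypothesis (versus $\geq 5$ in \cite{Huang_Shao_Ye_2024}) reflects this sharpness of the LCLT: the pointwise $n^{-1}$ rate for the $2$-dimensional walk is precisely what makes the approach viable at the lowest possible polynomial scale.
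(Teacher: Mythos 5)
Your proposal correctly identifies the two ingredients that make degree $2$ accessible --- the $d=2$ case of \cref{thm:LCLT} and a construction in the spirit of Huang--Shao--Ye --- but it stops exactly where the proof has to begin: you state that ``the main obstacle is the careful design of $T$, $S$ and $A$'' and then do not supply that design, so there is no argument that the intersection measures actually oscillate. Moreover, the role you assign to the LCLT is not one it can play. The sharp $n^{-1}$ pointwise asymptotic does not ``separate the values of the intersection measures along different subsequences''; as you yourself observe, for a skew product the LCLT pushes $\mu\left(A\cap T^{-p_1(n)}A\cap S^{-p_2(n)}A\right)$ toward a single limit, not toward two. In the paper the LCLT enters only as an \emph{upper bound}: in dimension $2$ it gives $\lambda\left(S_m(f)=(0,0)\right)\le \beta/m$, and combined with the elementary estimate $p(n)-p(k)\ge\gamma\left(n+(n-k)^2\right)$, valid because $\deg p\ge 2$ (this is precisely where the degree hypothesis is used, and where $d=2$ is essential, since the $1$-dimensional rate $m^{-1/2}$ would only give a logarithmically divergent collision bound), one gets $\sum_{k<n}\lambda\left(S_{p(n)}(f)=S_{p(k)}(f)\right)=O\left(n^{-1/2}\right)$, hence that the range $\left\{S_{p(k)}(f)(y):k\le n\right\}$ has cardinality $(1+o(1))n$ almost surely. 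That full-density statement about the polynomial-time range, not any Gaussian asymptotic, is the analytic input.

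The missing combinatorial idea is the definition of $S$. The paper takes $T$ to be the skew product over an irrational rotation with fiber the $\Z^2$ Bernoulli shift $\Sigma=\{0,1\}^{\Z^2}$, driven by $f$, and $A=D\times[0]_{(0,0)}$. It then sets $S=Q^{-1}\circ T\circ Q$, where $Q(y,\omega)=(y,\Psi_{\pi_y}\omega)$ flips the symbol of $\omega$ at the sites $S_{p_1(n)}(f)(y)$ (matched to $S_{p_2(n)}(f)(y)$ by a measurable permutation $\pi_y$ of $\Z^2$) for all $n$ in a set $\mathcal{K}_y$ obtained by intersecting the first-visit times of both polynomial ranges with a fixed set $J\subset\N$ of upper Banach density $1$ and lower Banach density $0$; the full-density range statement guarantees that $\mathcal{K}_y$ inherits these two densities. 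The flip forces $\mu\left(A\cap T^{-p_1(n)}A\cap S^{-p_2(n)}A\right)=0$ for $n\in\mathcal{K}_y$, while independence of the Bernoulli coordinates gives a lower bound of $\tfrac14$ for $n\notin\mathcal{K}_y$, so averaging over the two scales defining $J$ yields limits $0$ and $\ge\tfrac14$ along two subsequences. Since $S$ is conjugate to $T$, it is automatically ergodic with zero entropy, so there is no need for the nilpotency discussion in your outline. Without this (or an equivalent) explicit mechanism, your plan cannot be completed as written.
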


Recently the non-convergence result for the iterates $p_1(n)=p_2(n)=n$ was proved (independently) by Austin \cite{2024arXiv240708630A}, Huang Shao and Ye \cite{2024arXiv240710728H} and Rhyzikov \cite{2024arXiv240713741R}. Austin remarks that his methods give a non-convergence result for iterates of the type $p(n)$, $p(n)$ for $p$ a general integer polynomial of degree at least $1$. As Austin's examples are Gaussian systems, the following is a simple consequence of \cite{2024arXiv240708630A}, its proof is given in \Cref{Appendix C}. 

\begin{prop}\label{prop: Austin}
For every $d,c\in\Z\setminus \{0\}$ there exists $T,S$, two ergodic measure preserving transformations of a standard probability space $(X,\mathcal{X},\mu)$, with $h_{\mu}(X,T) = h_{\mu}(X,S) = 0$ and $A\in\mathcal{X}$ with $\mu(A)>0$, such that the averages
\[
\frac{1}{N}\sum_{n=0}^{N-1} 1_{A}\circ T^{cn}1_{A}\circ S^{dn}
\]
do not converge in $L^2(\mu)$. 
\end{prop}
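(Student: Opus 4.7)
The plan is to deduce \cref{prop: Austin} from Austin's construction \cite{2024arXiv240708630A} by exploiting the flexibility of the Gaussian framework to extract integer roots. Austin produces two Gaussian automorphisms $T_0,S_0$ of a standard probability space $(X,\mathcal{X},\mu)$ and a set $A\in\mathcal{X}$ with $\mu(A)>0$ such that the averages $\tfrac{1}{N}\sum_{n=0}^{N-1}1_A\circ T_0^n\,1_A\circ S_0^n$ fail to converge in $L^2$. Since $T_0,S_0$ are Gaussian, they are completely determined by their actions on the first Wiener chaos $H$, namely by unitaries $U_0,V_0$ on $H$; moreover, $T_0$ and $S_0$ being ergodic of zero entropy corresponds to $U_0$ and $V_0$ having purely singular, non-atomic spectral measures.

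The key step is to choose unitaries $U,V$ on $H$ satisfying $U^c=U_0$ and $V^d=V_0$, which exist via the Borel functional calculus, and let $T,S$ be the Gaussian automorphisms on the same Gaussian probability space associated with $U$ and $V$. Because a Gaussian automorphism acts on the $k$-th Wiener chaos by the symmetric $k$-th tensor power of its first-chaos operator, the identities $U^{cn}=U_0^n$ and $V^{dn}=V_0^n$ upgrade to $T^{cn}=T_0^n$ and $S^{dn}=S_0^n$ as automorphisms of $(X,\mathcal{X},\mu)$. Consequently,
\[
\frac{1}{N}\sum_{n=0}^{N-1}1_A\circ T^{cn}\,1_A\circ S^{dn}\;=\;\frac{1}{N}\sum_{n=0}^{N-1}1_A\circ T_0^n\,1_A\circ S_0^n,
\]
and the right-hand side fails to converge in $L^2$ by Austin's result. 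Ergodicity and zero entropy transfer to $T$ and $S$: an atom of the spectral measure of $U$ at $\lambda$ would produce an atom of $U_0=U^c$ at $\lambda^c$, and a nontrivial absolutely continuous part of the spectrum of $U$ would push forward under the covering map $\theta\mapsto c\theta$ to a nontrivial absolutely continuous part of the spectrum of $U_0$, both contradicting the choice of $U_0$; the same applies to $V$.

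The main conceptual point is that Austin's witnessing set $A$ can be reused verbatim, because it is defined in terms of the Wiener chaoses of the underlying Gaussian Hilbert space, which depend only on $H$ and not on the particular unitaries acting on it. I therefore anticipate no serious obstacle, which is consistent with the proposition being advertised as a simple consequence of \cite{2024arXiv240708630A}; the only mild care is in selecting the roots $U,V$ as Borel functions of $U_0,V_0$ so that they are automorphisms of the same measurable Gaussian structure.
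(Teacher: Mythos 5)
Your proposal is correct and follows essentially the same route as the paper's Appendix C: both deduce the result from Austin's example by extracting $c$-th and $d$-th roots of the two Gaussian automorphisms (the paper invokes the fact that Gaussian automorphisms are spectrally determined and hence admit roots preserving ergodicity and zero entropy, which is exactly the functional-calculus-on-the-first-chaos construction you spell out) and then observing that $T^{cn}=T_0^n$ and $S^{dn}=S_0^n$ makes the averages literally coincide. Your write-up simply supplies more detail on why the roots exist and why ergodicity and zero entropy transfer.
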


The combination of \cref{thm:Main} and \cref{prop: Austin} gives a full solution to the question of Frantzikinakis and Host and the conjecture of Huang, Shao and Ye.

Finally, for some special cases of polynomials, our methods give the following counterexamples for the recurrence problem for zero entropy transformations. 

\begin{thm}\label{thm:recurrence counterexample}
There exists $M>0$, such that $\N\ni L>M$ and $d\geq 3$, there exists $T,S$, two ergodic measure preserving transformations of a standard probability space $(X,\mathcal{X},\mu)$, with $h_{\mu}(X,T) = h_{\mu}(X,S) = 0$ and $A\in\mathcal{X}$ with $\mu(A)>0$, such that for all $n\in\N$,
\[
\mu\left(A\cap T^{-Ln^d}A\cap S^{-Ln^d}A\right)=0.
\]
\end{thm}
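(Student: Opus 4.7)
The plan is to adapt the skew-product construction behind \cref{thm:Main} so that the triple intersection vanishes identically along the sparse sequence $Ln^d$, not merely fails to have a well-defined $L^2$ average. First I would apply \cref{thm:LCLT} with $d=2$ to an ergodic aperiodic base $(X_0,\mathcal{B}_0,m_0,T_0)$ to obtain a cocycle $f=(f_1,f_2):X_0\to\Z^2$ satisfying the two-dimensional lattice local central limit theorem. I would then build $T$ and $S$ as skew-product extensions of $T_0$ acting on a common probability space $X=X_0\times G$ for a suitable compact abelian group $G$, with
\[
T(x,g)=(T_0 x,\ g+\phi_1(x)),\qquad S(x,g)=(T_0 x,\ g+\phi_2(x)),
\]
where $\phi_1,\phi_2$ are manufactured from $f_1,f_2$. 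Compact abelian extensions of zero-entropy systems are zero-entropy, while the LCLT supplies ergodicity of $T$ and $S$ and enough non-degeneracy of the fibre cocycles to make the subsequent analysis work.

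The candidate set is $A=B\times H\subset X_0\times G$ with $m_0(B)>0$ and $H\subset G$ of positive Haar measure. A direct Fubini calculation gives
\[
\mu\bigl(A\cap T^{-k}A\cap S^{-k}A\bigr)=\int_{B\cap T_0^{-k}B}\!\int_G \ind_H(g)\,\ind_H\!\bigl(g+\phi_1^{(k)}(x)\bigr)\,\ind_H\!\bigl(g+\phi_2^{(k)}(x)\bigr)\,dg\,dm_0(x),
\]
where $\phi_i^{(k)}$ denotes the $k$th Birkhoff sum of $\phi_i$ over $T_0$. Hence vanishing of the triple intersection at $k=Ln^d$ reduces to the $m_0$-almost-sure exclusion of the joint event that both $\phi_1^{(k)}(x)$ and $\phi_2^{(k)}(x)$ land in $H-H$. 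The task becomes to engineer $f$ (and thereby $\phi_1,\phi_2$) together with $G$ and $H$ so that, for every $n\geq 1$, this joint event is empty.

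The main obstacle is converting the probabilistic decay supplied by the LCLT (at best $O(1/k)$ for this integrand) into \emph{strict} vanishing. I would try to achieve this by exploiting the structural freedom in the Rokhlin-tower construction behind \cref{thm:LCLT}: the cocycle $f$ can be arranged to take values in a coset $v+\Lambda$ of a proper sublattice $\Lambda\subset\Z^2$ without altering the Gaussian scaling of \cref{thm:LCLT}, since enlarging the cocycle by fixed integer multiples along the tower levels changes neither the variance normalisation nor aperiodicity. Once $\phi_1^{(k)}(x),\phi_2^{(k)}(x)$ are forced to lie in $kv+\Lambda$, the hypotheses $L>M$ and $d\geq 3$ enter through a purely arithmetic step: choose the modular parameters of $\Lambda$ and of $H$ so that, for every $n\geq 1$, the vector $Lv n^d$ represents a residue class of $\Z^2/\Lambda$ disjoint from $(H-H)\times(H-H)$; the threshold $M$ is selected to make this avoidance uniform in $n$, and $d\geq 3$ prevents $Ln^d$ from visiting the dangerous residue classes that arise when $d\leq 2$. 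The hardest step is precisely this marriage of the tower-cocycle flexibility with the arithmetic of $Ln^d$, after which the rest of the proof is an essentially bookkeeping combination of the resulting empty-fibre statement with Fubini.
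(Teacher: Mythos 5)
Your overall architecture (skew product over a base carrying the $2$-dimensional LCLT cocycle, set of the form ``base set $\times$ fibre set'', Fubini reduction to a fibre condition) matches the paper's, but the mechanism you propose for forcing the triple intersection to vanish is different from the paper's and, as described, cannot work.

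The obstruction is the arithmetic step. If you force $\phi_1^{(k)},\phi_2^{(k)}$ into a coset $kv+\Lambda$ and try to choose $H$ as a union of $\Lambda$-residue classes so that $Lvn^d \bmod \Lambda$ avoids $H-H$ for every $n$, you run into the fact that $0\in H-H$ whenever $H\neq\emptyset$, while $Ln^dv\equiv 0 \pmod{\Lambda}$ for infinitely many $n$ (take $n$ any multiple of the order of $Lv$ in the finite group $\Z^2/\Lambda$; then $n\mid n^d$). At such $n$ the fibre integral degenerates to $\mathrm{Haar}(H)>0$ times $m_0\bigl(B\cap T_0^{-Ln^d}B\bigr)$, and by the Furstenberg--S\'ark\"ozy theorem the latter is positive for some such $n$. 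More generally, any scheme in which $T$ and $S$ act on the fibre through group rotations determined by residue-class data is blocked by single polynomial recurrence; no choice of $M$ or of $d\geq 3$ repairs this. A secondary gap is that you never establish $\mu(A)>0$ simultaneously with the exclusion property --- in the paper this is where $L>M$ and $d\geq 3$ actually enter.

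For comparison, the paper's proof keeps the fibre $\Sigma=\{0,1\}^{\Z^2}$ and proceeds in two steps. First, it shows the set $\CC_p$ of base points $y$ for which the values $S_{p(n)}(f)(y)$, $n\in\N$, are pairwise distinct and never equal to $(0,0)$ has positive measure: the LCLT gives $\lambda\bigl(S_{p(m)-p(n)}(f)=(0,0)\bigr)\leq \beta/(p(m)-p(n))$, and for $p(n)=Ln^d$ with $d\geq 3$ the elementary inequality $m^d-n^d\geq m^2+(m-n)^3$ makes $\sum_{n}\sum_{m>n}1/(p(m)-p(n))$ finite, hence $<1/2$ for $L$ large --- this is exactly where $M$ and $d\geq3$ are used. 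Second, it sets $S=V^{-1}TV$ where $V$ flips the $\{0,1\}$-coordinate at every site $S_{p(n)}(f)(y)$ for $y\in\CC_p$; then for $A=\CC_p\times[0]_{(0,0)}$ a joint return at time $p(n)$ would force the single coordinate $\omega\bigl(S_{p(n)}(f)(y)\bigr)$ to equal both $0$ and $1$. This point-dependent involution is precisely the non-arithmetic device your proposal is missing: it lets $A$ be recurrent under $T$ and under $S$ separately while excluding the joint return identically.
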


\subsubsection{Notation}
Here and throughout $\log(x)$ denotes the logarithm of $x$ to base $2$ while $\ln(x)$ is the natural logarithm of $x$. 

Given $x\in\R^d$. $\|x\|:=\sqrt{\sum_{j=1}^dx_j^2}$ is its Euclidean norm and $\|x\|_\infty=\max_{1\leq j\leq d}|x_j|$ is its supremum norm. Now when $(X,\B,m)$ is a probability space and $F:X\to \R^d$, we write $\|F\|_2=\sqrt{\int_X \|F(x)\|^2dm(x)}$.

{\bf Acknowledgement.} The authors would like to thank Wen Huang, Song Shao, and Xiangdong Ye for communicating their results,  Nikos Frantzikinakis for asking us to investigate the recurrence problem, and Dalibor Voln\'y for remarks on early versions. This work was partially supported by the Israel Science Foundation grant No. 1180/22. 

\section{Construction of the function in Theorem \ref{thm:LCLT}}\label{sec:construction}

Let $2\leq D\in\mathbb{N}$ and $(X,\B,m,T)$ be an ergodic and aperiodic probability preserving system. Denote by $U:L^2(X,\mu)\to L^2(X,\mu)$ the corresponding Koopman operator of $T$. Slightly more generally, if $F$ is a function from $X$ to $\R^D$ then $UF=F\circ T$.

The function in our proof of \cref{thm:LCLT} is of the form $F=(f^{(1)},\ldots,f^{(D)})$ where for each $1\leq i\leq D$, $f^{(i)}$ is a sum of couboundaries and satisfies the local limit theorem result in dimension $1$ of \cite{MR4374685}. A priori, this would not guarantee that $F$ satisfies the multi-dimensional local limit theorem. The key point is that, we carefully construct the coboundaires in the functions together to guarantee that $S_n(F)$ can be expressed as the sum
\[
S_n(F)=\big(Y_1(n),\ldots,Y_D(n)\big)+Z(n),
\]
where $Y_1(n),\dots,Y_D(n)$ are independent and satisfy the local limit theorem in dimension-$1$ and $Z(n)$ is independent from $(Y_1(n),\ldots,Y_D(n))$ with a small $L^2$ norm.

For $k \in \N$ we set,
\[
p_k:=\begin{cases}
2^k, & k\ \text{even},\\
2^k+1, &\ k\ \text{odd},
\end{cases}
\] and
$d_k:=2^{k^2}$. Similarly, let $\alpha_1=\frac{1}{2}$ and $\alpha_k:=\frac{1}{p_k\sqrt{k\log(k)}}$ for $k\geq 2$. 

\begin{prop}\label{prop:functions_defn}
Let $(X,\B,m,T)$ be an ergodic and aperiodic probability preserving system and $D\in\mathbb{N}$. There exists $\bar{f}_k^{(i)}:X\to \{-1,0,1\}$, $k\in\mathbb{N},i\in\{1,\ldots,D\}$ such that:
\begin{enumerate}[label=(\alph*)]
\item\label{prop_sec_a:functions_defn} For all $k\in\mathbb{N}$ and $i\in\{1,\dots, D\}$, 
\[
\mu\left(\bar{f}_k^{(i)}=1\right)=\mu\left(\bar{f}_k^{(i)}=-1\right)=\frac{\alpha_k^2}{2}.
\]
\item\label{prop_sec_b:functions_defn} For every $k\in\N$, the functions $\left\{\bar{f}_k^{(i)}\circ T^j:\ 0\leq j\leq 2d_k+p_k,\ 1\leq i\leq D\right\}$ are i.i.d. 
\item\label{prop_sec_c:functions_defn} For every $k\in\N$, the functions $\left\{\bar{f}_k^{(i)}\circ T^j:\ 0\leq j\leq 2d_k+p_k,\ 1\leq i\leq D\right\}$ are independent of 
\[
\mathcal{A}_k=\left\{\bar{f}_l^{(i)}\circ T^j:\ 1\leq l<k,1\leq i\leq D,\ 0\leq j\leq 2d_k+p_k\right\}. 
 \]
\end{enumerate}
\end{prop}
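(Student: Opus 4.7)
The plan is to construct the $\bar{f}_k^{(i)}$ inductively on $k$, at each stage defining the new $D$ functions on a Kakutani--Rokhlin tower whose base has been made equidistributed across the atoms of the finite $\sigma$-algebra generated by the previously constructed iterates. In the base case $k=1$, $\mathcal{A}_1$ is trivial and the construction reduces to producing a single $\{-1,0,1\}^D$-valued block of length $2d_1+p_1+1$ via a standard Rokhlin tower together with a product coding on the base. At step $k>1$, assume $\bar{f}_l^{(i)}$ are in hand for $l<k$; since each is $\{-1,0,1\}$-valued and only finitely many iterates appear, $\mathcal{A}_k$ is a finite $\sigma$-algebra, whose atom partition I denote $\mathcal{P}_k$.

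The main tool is a relative version of Alpern's two-tower theorem. Setting $N_k:=2d_k+p_k+1$, I would find bases $B,B'$ so that $\{T^jB\}_{j=0}^{N_k-1}$ and $\{T^jB'\}_{j=0}^{N_k}$ are disjoint, partition $X$ up to measure zero, and both bases are proportionally distributed across $\mathcal{P}_k$:
\[
\mu(B\cap A)=\mu(B)\,\mu(A),\qquad \mu(B'\cap A)=\mu(B')\,\mu(A),\qquad A\in\mathcal{P}_k.
\]
This is achievable by running Rokhlin's lemma inside each atom (the induced map is ergodic and aperiodic) and then patching via Alpern's interlocked heights $N_k,N_k+1$. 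I then partition $B$ by a product coding done separately in each atom---splitting $B\cap A$ into cells $\{B_\omega\cap A\}_{\omega\in\{-1,0,1\}^{DN_k}}$ with $\mu(B_\omega\cap A)=\mu(B\cap A)\prod_{i,j}P(\omega_{i,j})$, where $P(\pm 1)=\alpha_k^2/2$ and $P(0)=1-\alpha_k^2$---arranged to be independent also of the return-map destination, and analogously for $B'$. Finally set $\bar{f}_k^{(i)}(T^jx):=\omega_{i,j}$ for $x\in B_\omega$ with $0\leq j\leq 2d_k+p_k$, and similarly on the $B'$-tower. Then (a) follows from $N_k\mu(B)+(N_k+1)\mu(B')=1$ and the level-wise marginal; (b) follows from the product coding within each column, with the column-boundary case reduced to the independence of the base codings on $B$ versus $B'$ from the return map; and (c) follows from the per-atom proportionality, which makes $\{B_\omega\}$ independent of $\mathcal{A}_k$.

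The hardest step is the joint realisation of these Rokhlin requirements in a single construction---exact cover via Alpern, exact per-atom proportionality across $\mathcal{P}_k$, and a base coding independent of the return-map destination. Each ingredient is standard in isolation; combining them requires an atom-by-atom Rokhlin argument in the spirit of the one-dimensional construction of \cite{MR4374685}, with the only real change being that the base partition now uses a product measure over $\{-1,0,1\}^{DN_k}$ rather than over $\{-1,0,1\}^{N_k}$. The multi-coordinate setup adds no fundamental difficulty beyond bookkeeping, since the i.i.d.\ product structure already couples the $D$ coordinates cleanly at the level of the base.
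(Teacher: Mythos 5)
Your strategy is workable, but it takes a genuinely different and considerably more laborious route than the paper. The paper does not rebuild any tower machinery: it quotes the one-dimensional copying lemma (\cref{prop:copying_lemma}, i.e.\ Proposition 2 of \cite{MR4374685}) as a black box and, at each induction step, applies it \emph{once} with the long block length $D(2d_k+p_k)$ and with $\xi$ the finite partition generated by all the previously built $\bar f_l^{(i)}\circ T^j$. This produces a single function $g_k$ whose first $D(2d_k+p_k)$ iterates are i.i.d.\ and independent of $\xi$, and the $D$ functions are then obtained by the shift trick $\bar f_k^{(J)}:=U^{(J-1)(2d_k+p_k)}g_k$: the $D$ windows are disjoint consecutive sub-blocks of one long i.i.d.\ string, so properties (a)--(c) are immediate. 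Your proposal instead re-derives a $D$-dimensional copying lemma from scratch via Alpern two-towers with per-atom proportional bases and a product coding over $\{-1,0,1\}^{DN_k}$. That can be made to work, and it is indeed how the cited lemma is proved in dimension one, but it forces you to re-litigate the one genuinely delicate point, which your sketch leaves at the level of an assertion: \emph{exact} (not asymptotic) joint independence of the window $\{\bar f_k^{(i)}\circ T^j\}$ when the orbit segment straddles two columns. Since the window length equals the tower height, this happens for all but a $1/N_k$-fraction of starting levels, so it is not a negligible boundary effect; one needs the coding of a base point to be independent of the coding of the next base point visited by the return map, which requires an additional layer of construction beyond "Rokhlin inside each atom plus Alpern patching". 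The paper's reduction sidesteps this entirely because the needed independence is already packaged in the cited lemma. In short: your approach duplicates the proof of \cref{prop:copying_lemma} with a larger alphabet, where a one-line concatenation argument on top of the cited statement suffices.
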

We say that a partition $\xi$ of $X$ is a \textbf{measurable partition} if all the atoms of $\xi$ are measurable. A function $g:X\to \R$ is \textbf{independent of the measurable partition} $\xi$, if for all $a<b$ and $\alpha\in\xi$
\[
m\left(\alpha\cap [a\leq g<b]\right)=m(\alpha)m(a\leq g<b). 
\]\cref{prop:functions_defn} is a simple consequence of the following result from \cite{MR4374685}. 
\begin{prop}\cite[Proposition 2]{MR4374685}\label{prop:copying_lemma}
Let $(X,\B,m,T)$ be an ergodic and aperiodic probability preserving system and $\xi$ a measurable partition of $X$. Given a finite set $A$ and $X_1,\ldots,X_m$ a collection of $A$ valued i.i.d. random variables, there exists $g:X\to A$ such that $\left(g\circ T^j\right)_{j=0}^{m-1}$ is independent of $\xi$ and distributed as $\left(X_j\right)_{j=1}^m$. 
\end{prop}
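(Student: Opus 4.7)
My plan is to construct $g$ on a Kakutani skyscraper over $T^m$, labeling each column by a Bernoulli word in $A^{mn}$ (refined by $\xi$) so that the joint distribution of $(g \circ T^j)_{j=0}^{m-1}$ is exactly i.i.d.\ as $(X_1, \ldots, X_m)$ and independent of $\xi$ in one stroke.

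Since $T$ is aperiodic, so is $T^m$. Fix $B \in \B$ of positive measure and form the Kakutani skyscraper for $(X, T^m, B)$: with $B_n := \{x \in B : r^{T^m}_B(x) = n\}$ the set of points in $B$ of first $T^m$-return time $n$, one has, modulo null sets,
\[
X \;=\; \bigsqcup_{n \geq 1} \bigsqcup_{i=0}^{mn-1} T^i B_n,
\]
so each column has $T$-height $mn$, a multiple of $m$. Assume first that $\xi$ is finite (the general case follows by approximation). For each $n \geq 1$ and each atom $\alpha \in \xi$, use that $m|_{B_n \cap \alpha}$ is non-atomic (a consequence of aperiodicity of $T$) to partition $B_n \cap \alpha$ into measurable pieces $B_{n, \mathbf{w}}^{\alpha}$ indexed by words $\mathbf{w} = (w_0, \ldots, w_{mn-1}) \in A^{mn}$ with
\[
m\bigl(B_{n, \mathbf{w}}^{\alpha}\bigr) \;=\; m(B_n \cap \alpha) \prod_{i=0}^{mn-1} P(X_1 = w_i),
\]
arranging the partition so that it is independent (in the product-of-partitions sense) of the finite partition of $B_n \cap \alpha$ generated by $\{y \in B_n \cap \alpha : T^j y \in \alpha'\}$ for $0 \leq j < mn + m$ and $\alpha' \in \xi$; this extra independence is achievable by non-atomicity of the conditional measures. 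Finally set $g(T^i x) := w_i$ for $x \in B_{n, \mathbf{w}}^{\alpha}$ and $0 \leq i < mn$.

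To verify both conclusions simultaneously, I would show that for every $\alpha \in \xi$ and every $\mathbf{a} = (a_0, \ldots, a_{m-1}) \in A^m$,
\[
m\bigl(\alpha \cap \{g = a_0, g \circ T = a_1, \ldots, g \circ T^{m-1} = a_{m-1}\}\bigr) \;=\; m(\alpha) \prod_{j=0}^{m-1} P(X_1 = a_j).
\]
When $x$ lies in the lower $mn-m+1$ levels of its column, the tuple $(g(x), \ldots, g(T^{m-1}x))$ is read off from $m$ consecutive coordinates of a single column's label, and the product structure of the label measure together with the $\xi$-refinement yield the identity directly. The main obstacle is the points in the top $m-1$ levels of a column, where the tuple straddles two columns and the ``next'' column is determined by the measurable map $T^{mn}|_{B_n}$; here the built-in independence of the label partition from the transition structure ensures that, after integrating out which next column we land in (and in which atom of $\xi$), the product distribution is preserved. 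For $\xi$ not finite, I would approximate by an increasing sequence of finite measurable partitions $\xi_k \uparrow \xi$, realizing $g$ via a Rokhlin disintegration so that the construction on each fiber is consistent as $k \to \infty$.
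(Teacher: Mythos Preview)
The paper does not prove this proposition: it is quoted verbatim from \cite[Proposition~2]{MR4374685} and used as a black box in the proof of Proposition~\ref{prop:functions_defn}. So there is no ``paper's own proof'' to compare your attempt against.

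That said, your sketch has a genuine gap at exactly the place you flag as ``the main obstacle''. For a base point $y\in B_n$ whose orbit crosses into the next column (say $T^{mn}y\in B_{n'}$), the straddling tuple $(g(T^iy),\ldots,g(T^{i+m-1}y))$ reads the \emph{tail} of the word $\mathbf{w}$ labeling the current column and the \emph{head} of the word $\mathbf{w}'$ labeling the next column. For the joint law to be $\mu_A^{\otimes m}$ (and independent of $\xi$), you need the partitions $\{B_{n,\mathbf{w}}^{\alpha}\}$ on $B_n$ and the $T^{mn}$-pullback of $\{B_{n',\mathbf{w}'}^{\alpha'}\}$ on $B_{n'}$ to be independent (restricted to $B_n\cap T^{-mn}B_{n'}$). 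Your construction only makes the label partition on $B_n\cap\alpha$ independent of the partition generated by $\{T^jy\in\alpha'\}$ for $0\le j<mn+m$, $\alpha'\in\xi$ --- i.e.\ independent of the \emph{$\xi$-itinerary}. The pulled-back label partition of the next column is \emph{not} measurable with respect to this $\xi$-itinerary (it is a brand-new partition you are simultaneously constructing on $B_{n'}$), so nothing in your setup forces $\mathbf{w}$ and $\mathbf{w}'\circ T^{mn}$ to be independent. The sentence ``the built-in independence of the label partition from the transition structure ensures that \ldots\ the product distribution is preserved'' is therefore unjustified: you have built in independence from the $\xi$-transition structure, not from the label-transition structure, and it is the latter that governs the straddling tuple.

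This is fixable, but not by a one-line appeal to non-atomicity. One route is to construct, on the base $B$, a single $A^{\mathbb{N}}$-valued random word whose coordinates are i.i.d.\ \emph{and} which is independent of the full $\sigma$-algebra generated by the induced map $T_B^{(m)}$ together with the $\xi$-itinerary, and then read $g$ off along each column; another is to run the labeling sequentially along the orbit of $T_B^{(m)}$ so that at each return the new word is chosen independently of everything already assigned. Either way, the circular dependence between ``label of this column'' and ``label of next column'' has to be broken explicitly, and your current sketch does not do that. The approximation step for infinite $\xi$ is also only gestured at, but in the paper's applications $\xi$ is always finite, so this is the lesser issue.
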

\begin{proof}[Proof of \cref{prop:functions_defn}]
We construct the functions by induction on $k$. First let $\xi=\left\{\emptyset,X\right\}$ be the trivial partition. Applying \cref{prop:copying_lemma}, we can find $g_1:X\to\{-1,0,1\}$ such that $\{g_1\circ T^j\}_{j=0}^{D(2d_1+p_1)}$ are i.i.d. and 
\[
m\big(g_1=1\big)=m\big(g_1=-1\big)=\frac{\alpha_1^2}{2}. 
\] 
Setting for $J\in\{1,\ldots,D\}$, $\bar{f}_1^{(J)}=U^{(J-1)(2d_1+p_1)}g_1$, the base of construction is complete. Suppose we have chosen $\bar{f}_k^{(i)}:X\to \{-1,0,1\}$, $k\leq N,i\in\{1,\ldots,D\}$ such that:

\begin{itemize}
\item[($a^\prime$)] For all $k\leq N$ and $i\in\{1,\dots, D\}$, 
\[
\mu\left(\bar{f}_k^{(i)}=1\right)=\mu\left(\bar{f}_k^{(i)}=-1\right)=\frac{\alpha_k^2}{2}.
\]
\item[($b^\prime$)] For every $k\leq N$, the functions $\left\{\bar{f}_k^{(i)}\circ T^j:\ 0\leq j\leq 2d_k+p_k,\ 1\leq i\leq D\right\}$ are i.i.d. 
\item[($c^\prime$)] For every $k\leq N$, the functions $\left\{\bar{f}_k^{(i)}\circ T^j:\ 0\leq j\leq 2d_k+p_k,\ 1\leq i\leq D\right\}$ are independent of 
\[
\mathcal{A}_k=\left\{\bar{f}_l^{(i)}\circ T^j:\ 1\leq l<k,1\leq i\leq D,\ 0\leq j\leq 2d_k+p_k\right\}. 
\]
\end{itemize} Let $\xi_{N+1}$ be the finite measurable partition of $X$ according to the values of the (finite valued) vector function
\[
\left(\bar{f}_l^{(i)}\circ T^j:\ 1\leq l\leq N,1\leq i\leq D,\ 0\leq j\leq 2d_{N+1}+p_{N+1}\right).
\]
Applying \cref{prop:copying_lemma} again, we obtain a function $g_{N+1}:X\to\{-1,0,1\}$ such that $\xi_{N+1}$ is independent of the $\{-1,0,1\}$ valued i.i.d. sequence $\left\{g_{N+1}\circ T^j:0\leq j<D\left(2d_{N+1}+p_{N+1}\right)\right\}$ and 
\[
m\left(g_{N+1}=1\right)=m\left(g_{N+1}=-1\right)=\frac{(\alpha_{N+1})^2}{2}.
\]
Define for all $J\in\{1,\ldots,D\}$, $\bar{f}_{N+1}^{(J)}:=U^{(J-1)(2d_{N+1}+p_{N+1})}g_{N+1}$. We check that \ref{prop_sec_a:functions_defn}, \ref{prop_sec_b:functions_defn} and \ref{prop_sec_c:functions_defn} hold. Indeed, \ref{prop_sec_a:functions_defn} holds since for all $J\in\{1,\ldots,D\}$, $\bar{f}_{N+1}^{(J)}$ and $g_{N+1}$ are equally distributed. As
\[
\left\{\bar{f}_{N+1}^{(i)}\circ T^j:\ 0\leq j\leq 2d_{N+1}+p_{N+1},\ 1\leq i\leq D\right\}=\left\{g_{N+1}\circ T^j:0\leq j<D\left(2d_{N+1}+p_{N+1}\right)\right\},
\] part \ref{prop_sec_b:functions_defn}, follows for $N+1$. Finally noting that being independent of $\xi_{N+1}$ is equivalent to being independent of $\mathcal{A}_{N+1}$, part \ref{prop_sec_c:functions_defn} follows for $k=N+1$. 
\end{proof}
From now on let $\bar{f}_k^{(i)}:X\to \{-1,0,1\}$, $k\in\mathbb{N},i\in\{1,\ldots,D\}$ be the functions from \cref{prop:functions_defn}. For each $i\in \{1,\ldots, D\}$, let $f^{(i)}=\sum_{k=1}^\infty f_k^{(i)}$, where for $k\in \mathbb{N}$
\begin{equation}\label{eq:def_function_f_k_i}
    f_k^{(i)}:=\sum_{j=0}^{p_k-1}U^j\bar{f}_k^{(i)}-U^{d_k}\left(\sum_{j=0}^{p_k-1}U^j\bar{f}_k^{(i)}\right). 
\end{equation}
\subsection{The local limit function in one dimension}\label{subsec:llt dim1}
In \cite{MR4374685}, \cref{prop:copying_lemma} was applied to construct a sequence of functions $\bar{f}_k:X\to\{-1,0,1\}$ so that 
\begin{enumerate}[label=(\alph*)]
\item\label{item_a:bar_f} For all $k\in\mathbb{N}$, 
\[
\mu\left(\bar{f}_k=1\right)=\mu\left(\bar{f}_k=-1\right)=\frac{\alpha_k^2}{2}.
\]
\item\label{item_b:bar_f} For every $k\in N$, the functions $\left\{\bar{f}_k\circ T^j:\ 0\leq j\leq 2d_k+p_k\right\}$ are i.i.d. 
\item\label{item_c:bar_f} For every $k\in N$, the functions $\left\{\bar{f}_k\circ T^j:\ 0\leq j\leq 2d_k+p_k\right\}$ are independent of 
\[
\mathcal{A}_k=\left\{\bar{f}_l\circ T^j:\ 1\leq l<k, \ 0\leq j\leq 2d_k+p_k\right\}. 
\]
\end{enumerate} After this the first author and Voln\'y defined the function $f=\sum_{k=1}^\infty f_k$, where for each $k \in \N$,
\[
f_k:=\sum_{j=0}^{p_k-1}U^j\bar{f}_k-U^{d_k}\left(\sum_{j=0}^{p_k-1}U^j\bar{f}_k\right).
\]
\begin{thm*}
If $\bar{f}_k:X\to\{-1,0,1\}$ satisfies \ref{item_a:bar_f}, \ref{item_b:bar_f} and \ref{item_c:bar_f}, and $f_k$ and $f$ are as above, then
\begin{itemize}
    \item \cite[Proposition 3]{MR4374685} $f \in L^2 (X,\mu)$. 
    \item \cite[Theorem 4]{MR4374685} $f$ satisfies the local limit theorem with $\sigma^2 := 2(\ln 2)^2 $. That is
\[
\sup_{x\in\Z} \left| \sqrt{n} \mu (S_n(f) = x) -\frac{e^{-x^2/(2 n \sigma^2)}}{\sqrt{2\pi \sigma^2}} \right|\xrightarrow[n\to\infty]{}0.
\]
\end{itemize}
\end{thm*}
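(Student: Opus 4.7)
My plan is to prove $f\in L^2$ and the local limit theorem separately.

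\textbf{$L^2$-integrability.} By properties (b) and (c), the functions $\{f_k\}_{k\geq 1}$ are mean-zero and mutually independent, hence pairwise orthogonal in $L^2$. Since $d_k > p_k$, the blocks $\{\bar{f}_k\circ T^j:0\leq j<p_k\}$ and $\{\bar{f}_k\circ T^j:d_k\leq j<d_k+p_k\}$ are disjoint, so by (b) the random variables $g_k:=S_{p_k}(\bar{f}_k)$ and $U^{d_k}g_k$ are i.i.d. Therefore $\|f_k\|_2^2=2\|g_k\|_2^2=2p_k\alpha_k^2=2/(p_k k\log k)$ for $k\geq 2$, which is summable as $p_k\geq 2^k$; hence $\|f\|_2^2=\sum_k\|f_k\|_2^2<\infty$.

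\textbf{Setup for LCLT.} Let $\phi_n(t):=\int_X e^{itS_n(f)}\,d\mu$. Stone's inversion formula gives $\mu(S_n(f)=x)=(2\pi)^{-1}\int_{-\pi}^\pi e^{-itx}\phi_n(t)\,dt$, and after the substitution $t=s/\sqrt n$ the LCLT is equivalent to
\[
\int_{-\pi\sqrt n}^{\pi\sqrt n}\bigl|\phi_n(s/\sqrt n)-e^{-\sigma^2 s^2/2}\bigr|\,ds\xrightarrow[n\to\infty]{}0.
\]
I would split the range of $s$ into $\{|s|\leq A\}$, $\{A\leq |s|\leq\delta\sqrt n\}$, and $\{\delta\sqrt n\leq |s|\leq\pi\sqrt n\}$; the first two are handled by a quantitative CLT for $S_n(f)/\sqrt n$, which follows from the variance computation below combined with a Lindeberg-type argument that exploits the independence of the $f_k$.

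\textbf{Variance and identification of $\sigma^2$.} For $k$ in the intermediate range $\sqrt{\log_2 n}\lesssim k\lesssim \log_2 n$ (so that $p_k\leq n\leq d_k$), the telescoping yields $S_n(f_k)=C_k-U^{d_k}C_k$ with $C_k:=\sum_{j=0}^{n-1}g_k\circ T^j$, and $C_k,U^{d_k}C_k$ are independent since their underlying $\bar{f}_k$-blocks are disjoint. Expanding $C_k=\sum_m c_m\,\bar{f}_k\circ T^m$, the coefficient $c_m$ equals $p_k$ on about $n$ indices, so $\mathrm{Var}(C_k)\sim np_k^2\alpha_k^2=n/(k\log k)$. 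Applying $\int_a^b\frac{dk}{k\log_2 k}=\ln 2\cdot(\ln\ln b-\ln\ln a)$,
\[
\mathrm{Var}(S_n(f))\sim 2n\int_{\sqrt{\log_2 n}}^{\log_2 n}\frac{dk}{k\log_2 k}=2n(\ln 2)^2=n\sigma^2,
\]
while the contributions from $k<\sqrt{\log_2 n}$ and $k>\log_2 n$ are readily verified to be $o(n)$.

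\textbf{Main obstacle: decay of $|\phi_n(t)|$ away from zero.} The hardest step is the tail range $|s|\in[\delta\sqrt n,\pi\sqrt n]$, which requires $|\phi_n(t)|\leq e^{-cn}$-type decay for $|t|\in[\delta,\pi]$. By independence $\phi_n(t)=\prod_k \phi_n^{(k)}(t)$, and using the telescoping together with the i.i.d.\ structure of $\{\bar{f}_k\circ T^m\}_m$,
\[
|\phi_n^{(k)}(t)|=|\phi_{C_k}(t)|^2\leq\exp\!\Bigl(-2\alpha_k^2\textstyle\sum_m(1-\cos(tc_m))\Bigr).
\]
For each individual $k$ this bound degenerates at $t$ near the resonances $2\pi\Z/p_k$; the challenge is to show that the wide spread of scales $(p_k)_k$ ensures that for every $t\in[\delta,\pi]$ sufficiently many $k$ simultaneously contribute a non-trivial factor, producing a cumulative exponent of order $n$ via a pigeonhole or Diophantine argument on the $p_k$.
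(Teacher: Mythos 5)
Your $L^2$ argument and your variance computation are correct and match the mechanism behind the cited results: the $f_k$ are mean zero and mutually independent by \ref{item_b:bar_f} and \ref{item_c:bar_f}, $\|f_k\|_2^2=2p_k\alpha_k^2=2/(p_k k\log k)$ is summable, and the scales with $p_k<n<d_k$ contribute variance $\sim 2n\sum_k 1/(k\log_2 k)\to 2n(\ln 2)^2$. The Fourier-inversion reduction and the three-range split of $s$ are also the right frame; this is exactly how the local CLT for the truncated sum $\mathsf{U}(f,n)$ is proved in \Cref{Appendix B}.

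The genuine gap is the step you yourself label the ``main obstacle'': the decay of $|\phi_n(t)|$ for $t\in[\delta,\pi]$. This is the heart of the theorem, and ``a pigeonhole or Diophantine argument on the $p_k$'' is not a proof. Moreover, no such argument can succeed without invoking the specific arithmetic of the sequence $(p_k)$, which your proposal never uses: if one took $p_k=2^k$ for \emph{all} $k$, then at $t=\pi$ every factor with $k\ge 1$ has $1-\cos(tp_k)=0$, your exponential bound gives no decay at all, and the LCLT genuinely fails on the resulting sublattice. The alternation $p_k=2^k$ ($k$ even), $p_k=2^k+1$ ($k$ odd) is exactly what rescues the estimate: since $\gcd(p_k,p_{k+1})=1$, B\'ezout gives that if both $p_kt$ and $p_{k+1}t$ lie within $\sqrt{\varepsilon}$ of $2\pi\Z$ then $t$ lies within $O(p_{k+1}\sqrt{\varepsilon})$ of $2\pi\Z$, so for $t\in[\delta,\pi]$ the single pair $(k,k+1)$ with $k$ the smallest even index in range (where $p_k=n^{o(1)}$) already forces $\max\bigl(1-\cos(p_kt),\,1-\cos(p_{k+1}t)\bigr)\ge c\delta^2/p_{k+1}^2$ and hence an exponent of order $n^{1-o(1)}$. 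This pairing is precisely why the dominant term is organized as $\mathsf{U}(f,n)=\sum_{k\in\hat I_n}V_k$ with each $V_k$ combining the scales $k$ and $k+1$ (see \cref{thm:U_f_n_all_estimates}\ref{item_b:psi_n_estimate}); without reproducing that argument, or an equivalent one, the proof of the second bullet is incomplete. The remaining assertion (a uniform bound $|\phi_n(s/\sqrt n)|\le e^{-Ls^2}$ on $A\le|s|\le\delta\sqrt n$ via a ``Lindeberg-type argument'') is also left unproved, though there your cosine bound combined with $1-\cos\theta\ge c\theta^2$, applied to the indices with $p_k\le 1/|t|$, does close it.
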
 Fix $i\in\{1,\ldots,D\}$. The functions $\left(f_k^{(i)}\right)_{k=1}^\infty$ is distributed as $(f_k)_{k=1}^\infty$. In addition, $\bar{f}_k^{(i)}:X\to\{-1,0,1\}$ satisfies \ref{item_a:bar_f}, \ref{item_b:bar_f} and \ref{item_c:bar_f}. \cref{cor:f_i}, follows from the above theorem.  
\begin{cor}\label{cor:f_i} For each $i \in \{1,\ldots, D\}$,

\begin{itemize}
    \item[(a)] $f^{(i)} \in L^2 (X,\mu)$. 
    \item[(b)] $f^{(i)}$ satisfies the local limit theorem with $\sigma^2 := 2(\ln 2)^2 $. That is
\[
\sup_{x\in\Z^d} \left| \sqrt{n} \mu (S_n\left(f^{(i)}\right) = x) -\frac{e^{-x^2/(2 n \sigma^2)}}{\sqrt{2\pi \sigma^2}} \right|\xrightarrow[n\to\infty]{}0.
\]
\end{itemize}
\end{cor}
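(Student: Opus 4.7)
The plan is to reduce Corollary~\ref{cor:f_i} directly to the one-dimensional local limit theorem quoted above from \cite{MR4374685}, by verifying that for each fixed $i$, the sequence $(\bar{f}_k^{(i)})_{k \in \N}$ satisfies exactly the three hypotheses \ref{item_a:bar_f}, \ref{item_b:bar_f}, \ref{item_c:bar_f} used there. Once this is done, the formula \eqref{eq:def_function_f_k_i} is identical (as a measurable operation on $\bar{f}_k^{(i)}$) to the formula defining $f_k$ from $\bar{f}_k$, so the conclusions of \cite[Proposition~3, Theorem~4]{MR4374685} will transfer verbatim.

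First I would fix $i \in \{1,\ldots,D\}$ and check the three hypotheses in order. Property \ref{item_a:bar_f} is immediate from part \ref{prop_sec_a:functions_defn} of Proposition~\ref{prop:functions_defn}. For \ref{item_b:bar_f}, note that by \ref{prop_sec_b:functions_defn} the entire family $\{\bar{f}_k^{(i')}\circ T^j : 0\le j \le 2d_k+p_k,\ 1\le i'\le D\}$ is i.i.d., so in particular its restriction to $i'=i$ is i.i.d. with the required common distribution. For \ref{item_c:bar_f}, observe that the sub-$\sigma$-algebra generated by $\{\bar{f}_l^{(i)}\circ T^j : 1\le l<k,\ 0\le j\le 2d_k+p_k\}$ is contained in the one generated by $\mathcal{A}_k$ from \ref{prop_sec_c:functions_defn}, so independence against the full $\mathcal{A}_k$ given by \ref{prop_sec_c:functions_defn} yields independence against this smaller collection a fortiori.

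Having verified the hypotheses, the joint distribution of $(\bar{f}_k^{(i)})_{k=1}^\infty$ under $m$ together with its shifts by powers of $T$ coincides with the joint distribution of $(\bar{f}_k)_{k=1}^\infty$ considered in \cite{MR4374685}. Since $f_k^{(i)}$ and $f^{(i)}=\sum_{k=1}^\infty f_k^{(i)}$ are built from $(\bar{f}_k^{(i)})$ by exactly the same bounded measurable rules that build $f_k$ and $f$ from $(\bar{f}_k)$, the finite-dimensional distributions of the sum process $S_n(f^{(i)})$ agree with those of $S_n(f)$. Part~(a) of the corollary is then exactly \cite[Proposition~3]{MR4374685} transferred along this distributional identification, and part~(b) is \cite[Theorem~4]{MR4374685} transferred the same way.

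There is no real obstacle here: the whole point of the construction in Proposition~\ref{prop:functions_defn} is that each coordinate marginal reproduces, up to joint distribution, the one-dimensional setup of \cite{MR4374685}, and the only care needed is to confirm that restricting the joint independence statements to a single $i$ preserves i.i.d.\ and independence. The genuinely new content is the multidimensional statement of Theorem~\ref{thm:LCLT}, which requires the cross-coordinate independence encoded in \ref{prop_sec_b:functions_defn} and will be used later to decompose $S_n(F)$ into an independent-coordinate piece plus a small error; the corollary above is merely the one-dimensional input.
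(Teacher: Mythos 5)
Your proposal is correct and follows essentially the same route as the paper: the paper likewise notes that $\bigl(f_k^{(i)}\bigr)_{k=1}^\infty$ is distributed as $(f_k)_{k=1}^\infty$, that each $\bar{f}_k^{(i)}$ satisfies hypotheses \ref{item_a:bar_f}, \ref{item_b:bar_f}, \ref{item_c:bar_f}, and then invokes the cited one-dimensional results of \cite{MR4374685}. Your explicit verification that restricting the joint independence statements of \cref{prop:functions_defn} to a single coordinate $i$ yields the required i.i.d.\ and independence properties is exactly the (implicit) content of the paper's one-line justification.
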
 
\vspace{3.5mm}
For $k\in\N$, we write 
\begin{equation}\label{eq:def_function_F_k}
F_k:=\left(f_k^{(1)},\ldots,f_k^{(D)}\right),
\end{equation} where $f_k^{(i)}$ are as in \eqref{eq:def_function_f_k_i}, for $1\leq i \leq D$. Set $F:X\to\Z^d$ to be the function
\begin{equation}\label{eq:def_function_F}
F:=\sum_{k=1}^\infty F_k=\left(f^{(1)},\ldots,f^{(D)}\right).
\end{equation}
We will show, using the extra independence we introduced in the construction and the arguments in \cite{MR4374685}, that $F$ satisfies the multi-dimensional lattice local central limit theorem (\cref{thm:LCLT}). For convenience, we recall the statement of the Theorem below. 

\begin{thm}\label{thm:LCLT_1} The function $F:X\to\mathbb{Z}^D$, satisfies the local limit theorem with $\sigma^2 := 2(\mathrm{ln}\, 2)^2 $. That is
\[
\sup_{x\in\Z^D} n^{D/2}\left|m\left(S_n(F)=x\right)-\frac{1}{(2\pi n\sigma^2)^{D/2}}e^{-\frac{\|x\|^2}{2n\sigma^2}}\right|\xrightarrow[n\to\infty]{}0.
\]
\end{thm}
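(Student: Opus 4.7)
My plan is to implement the ``main part plus small error'' decomposition $S_n(F) = Y(n) + Z(n)$ sketched at the opening of \Cref{sec:construction}, where $Y$ has independent coordinates carrying the full Gaussian variance $\sigma^2 n$ and $Z$ is independent of $Y$ with $\|Z(n)\|_2 = o(\sqrt{n})$. Given such a decomposition, the $D$-dimensional LLT for $S_n(F)$ reduces to a coordinatewise one-dimensional LLT for the independent components of $Y$, followed by a Gaussian-density perturbation argument absorbing $Z$.

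For each $n$, let $K = K(n) := \max\{k \in \N : d_k < n\}$ (so $K(n) \to \infty$) and define, for $1 \leq i \leq D$,
\[
Y_i(n) := \sum_{k > K} S_n\!\left(f_k^{(i)}\right), \qquad Z_i(n) := \sum_{k \leq K} S_n\!\left(f_k^{(i)}\right).
\]
For $k > K$ we have $n \leq d_k$, so $S_n(f_k^{(i)})$ is measurable with respect to $\{\bar{f}_k^{(i)} \circ T^j : 0 \leq j \leq 2d_k + p_k\}$; \Cref{prop:functions_defn}\ref{prop_sec_b:functions_defn} then makes $S_n(f_k^{(1)}), \ldots, S_n(f_k^{(D)})$ i.i.d., and \Cref{prop:functions_defn}\ref{prop_sec_c:functions_defn} makes level $k$ independent of every earlier level. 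Aggregating, $(Y_1(n), \ldots, Y_D(n))$ is i.i.d.\ and independent of $(Z_1(n), \ldots, Z_D(n))$. Using the cocycle identity $S_n(f_k^{(i)}) = S_{d_k}(g_k^{(i)}) - S_{d_k}(g_k^{(i)}) \circ T^n$ for $n > d_k$ (with $g_k^{(i)} := \sum_{j=0}^{p_k - 1} U^j \bar{f}_k^{(i)}$), together with the within-level i.i.d.\ structure, one obtains $\|S_n(f_k^{(i)})\|_2 = O\!\big(\sqrt{d_k/(k \log k)}\big)$ for $k \leq K$. Since $d_k = 2^{k^2}$ grows super-exponentially, the sum $\sum_{k \leq K} \sqrt{d_k/(k \log k)}$ is dominated by $k = K$, giving
\[
\|Z_i(n)\|_2 \;\leq\; \sum_{k \leq K} \|S_n(f_k^{(i)})\|_2 \;=\; O\!\left(\sqrt{d_K/(K \log K)}\right) \;=\; o(\sqrt{n}).
\]

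The main obstacle is the one-dimensional LLT for $Y_i$: \Cref{cor:f_i} only supplies this for $f^{(i)} = Y_i + Z_i$, and since $|\widehat{S_n(f^{(i)})}(t)| = |\widehat{Y_i(n)}(t)| \cdot |\widehat{Z_i(n)}(t)|$, upper bounds on $\widehat{S_n(f^{(i)})}$ give no upper bounds on $\widehat{Y_i}$. The resolution is to re-examine the Fourier-analytic proof of \cite[Theorem 4]{MR4374685}: the crucial bounds (Gaussian-type control near $0$ and a uniform bound $|\widehat{\,\cdot\,}(t)| \leq \rho^n$ away from $0$) are products over levels, and the levels $k > K(n)$ alone produce the full variance $\sigma^2 = 2(\ln 2)^2$. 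Dropping the bounded-by-one factors coming from $k \leq K$ leaves exactly $\widehat{Y_i(n)}$, whence the one-dimensional LLT
\[
\sup_{y \in \Z}\sqrt{n}\left|m(Y_i(n) = y) - \tfrac{1}{\sqrt{2\pi n \sigma^2}}e^{-y^2/(2n\sigma^2)}\right| \xrightarrow[n\to\infty]{} 0.
\]
Multiplying these asymptotics (each of uniform size $O(n^{-1/2})$) yields the joint LLT
\[
\sup_{y \in \Z^D} n^{D/2} \left| m(Y(n) = y) - g_n(y) \right| \xrightarrow[n\to\infty]{} 0, \qquad g_n(y) := (2\pi n \sigma^2)^{-D/2} e^{-\|y\|^2/(2n\sigma^2)}.
\]

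To finish, conditioning on $Z(n)$ and using $Y \perp Z$,
\[
m(S_n(F) = x) \;=\; \mathbb{E}\!\left[m(Y(n) = x - Z(n))\right] \;=\; \mathbb{E}\!\left[g_n(x - Z(n))\right] + o(n^{-D/2}),
\]
uniformly in $x \in \Z^D$. Since $\sup_z \|\nabla g_n(z)\| = O(n^{-(D+1)/2})$, the mean value theorem combined with $\mathbb{E}\|Z(n)\| \leq \|Z(n)\|_2 = o(\sqrt{n})$ yields $|\mathbb{E}[g_n(x - Z(n))] - g_n(x)| = o(n^{-D/2})$ uniformly in $x$, completing the proof.
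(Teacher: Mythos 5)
Your overall architecture is the same as the paper's: split $S_n(F)$ into an independent ``main part'' whose $D$ coordinates are i.i.d.\ and satisfy the one-dimensional LLT, plus an independent error of $L^2$ norm $o(\sqrt{n})$, then multiply the coordinatewise asymptotics and absorb the error. Two of your choices genuinely differ from the paper, and both are fine. First, your main part $Y_i(n)=\sum_{k>K}S_n(f_k^{(i)})$ keeps all levels with $d_k\geq n$ (the paper's $\hat{Y}_F(n)+Z_{La}(F)(n)$), whereas the paper strips further down to $\mathsf{U}(F,n)$ before proving any LLT; your coboundary bound $\|S_n(f_k^{(i)})\|_2=O(\sqrt{d_k/(k\log k)})$ and the resulting $\|Z_i(n)\|_2=o(\sqrt{n})$ are correct, and the independence of $(Y_1,\dots,Y_D)$ from $Z$ does follow from \cref{prop:functions_defn}\ref{prop_sec_c:functions_defn} (one should check, as in \cref{lem:first_bound_in_KV_multi_d}, that the indices $j\leq d_l+p_l+n-2$ appearing in $Z$ stay inside the window $2d_k+p_k$ for every $k>K$; they do since $d_l<n\leq d_k$). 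Second, your final perturbation step via $\sup_z\|\nabla g_n(z)\|=O(n^{-(D+1)/2})$ and $\mathbb{E}\|Z(n)\|=o(\sqrt{n})$ is a cleaner and shorter substitute for \cref{prop:bla}, and it is correct.

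The one soft spot is exactly the step you flag as ``the main obstacle'': the one-dimensional LLT for $Y_i(n)$. As written, your resolution is not quite a proof. The mechanism ``drop the bounded-by-one factors coming from $k\leq K$'' transfers upper bounds in the wrong direction (removing factors of modulus $\leq 1$ from a product makes it larger), and the Fourier bounds of \cite{MR4374685} are not stated for the product over all levels $k>K$ but only for the further sub-product $\mathsf{U}_n$ built from the levels $p_k<n<d_k$. The fix is short but needs to be said: since $Y_i(n)=\mathsf{U}\bigl(f^{(i)},n\bigr)+W_i(n)$ with $W_i(n)$ independent of $\mathsf{U}\bigl(f^{(i)},n\bigr)$ and $\|W_i(n)\|_2^2=O\bigl(n/\sqrt{\log(n)}\bigr)$, one has $\bigl|\widehat{Y_i(n)}\bigr|\leq\bigl|\widehat{\mathsf{U}(f^{(i)},n)}\bigr|$, so the integrable domination needed for Fourier inversion comes from the estimates on $\mathsf{U}\bigl(f^{(i)},n\bigr)$ (\cref{thm:U_f_n_all_estimates}), and the pointwise Gaussian limit for $\widehat{Y_i(n)}(t/\sqrt{n})$ follows from the CLT for $\mathsf{U}\bigl(f^{(i)},n\bigr)/\sqrt{n}$ together with $\|W_i(n)\|_2^2=o(n)$; equivalently, just apply the $D=1$ case of \cref{prop:bla} to the decomposition $Y_i=\mathsf{U}\bigl(f^{(i)},n\bigr)+W_i$ and invoke \cref{thm:LCLT_for_U}. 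With that substitution your argument closes, and it is essentially the paper's proof with a coarser intermediate decomposition and a slicker perturbation lemma.
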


\section{Proof of Theorem \ref{thm:LCLT_1}}
The proof follows similar lines as \cite{MR4374685} and uses the estimates that appear there. The idea is to use the arguments in \cite{MR4374685} and the extra independence we introduced in the construction of the functions above. First, We will describe the strategy of proof in \cite{MR4374685} and the results we will exploit. On the way, we will correct a minor error in \cite{MR4374685}

\subsection{Review of the arguments and results on the local limit theorem for $d=1$} Assume $\bar{f}_k:X \rightarrow \Z$, $k\in \N$ is a sequence of functions satisfying \ref{item_a:bar_f}, \ref{item_b:bar_f} and \ref{item_c:bar_f} in \cref{subsec:llt dim1} and let $f_k$ and $f=\sum_{k=1}^\infty f_k$ be the functions defined there. 

The proof of the $1$-dimensional local limit theorem for $S_n(f)$  is done by decomposing 
\[
S_n(f)=Z_{SM}(f)(n)+\hat{Y}_f(n)+Z_{La}(f)(n),
\]
where 
\begin{align*}
Z_{SM}(f)(n)&:= \sum_{\{k : d_k\leq n\}}S_n\left(f_k\right)\\
\hat{Y}_f(n)&:= \sum_{\{k : p_k <n < d_k\}}S_n\left(f_k\right)\\
Z_{La}(f)(n)&:= \sum_{\{k: n \leq p_k\}}S_n\left(f_k\right). 
\end{align*}

\begin{lemma}\label{lem:first_bound_in_KV}\cite[Lemma 7]{MR4374685}
For every $n\in\mathbb{N}$, the random variables $Z_{Sm}(f)(n),\hat{Y}_f(n),Z_{La}(f)(n)$ are independent and 
\[
\left\|Z_{Sm}(f)(n)+Z_{La}(f)(n)\right\|_2^2=O\left(\frac{n}{\sqrt{\log(n)}}\right). 
\]
\end{lemma}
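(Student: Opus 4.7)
The plan is to exploit the structure $f_k = g_k - U^{d_k} g_k$ with $g_k := \sum_{j=0}^{p_k-1} U^j \bar{f}_k$, combined with properties \ref{item_a:bar_f}--\ref{item_c:bar_f}. Independence and the variance bound are proved separately; both rest on tracking which shifts $\bar{f}_k \circ T^j$ appear inside each $S_n(f_k)$.

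For independence, unwinding definitions shows that $S_n(f_k)$ is a linear combination of $\{\bar{f}_k \circ T^j : 0 \leq j \leq n + d_k + p_k - 2\}$, hence measurable with respect to the $\sigma$-algebra generated by shifts of the single function $\bar{f}_k$. Since $Z_{Sm}(f)(n)$, $\hat{Y}_f(n)$, $Z_{La}(f)(n)$ aggregate over the disjoint index sets $\{k : d_k \leq n\}$, $\{k : p_k < n < d_k\}$, and $\{k : n \leq p_k\}$, the required joint independence reduces to joint independence of the families $\{\bar{f}_k \circ T^j\}_{k,j}$ across $k$ in the relevant range, which follows by iterating property \ref{item_c:bar_f}.

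For the $L^2$ bound, independence gives $\|Z_{Sm}(f)(n)+Z_{La}(f)(n)\|_2^2 = \|Z_{Sm}(f)(n)\|_2^2 + \|Z_{La}(f)(n)\|_2^2$, so it suffices to bound each term. When $n \leq p_k$ one has $n + p_k \leq 2 p_k < d_k$, so $S_n(g_k)$ and $U^{d_k} S_n(g_k)$ depend on disjoint i.i.d.\ blocks of $\bar{f}_k$-shifts and are themselves i.i.d.; combined with the representation $S_n(g_k) = \sum_l c_l U^l \bar{f}_k$ for trapezoidal coefficients $c_l$, this yields $\|S_n(f_k)\|_2^2 = 2\|S_n(g_k)\|_2^2$ of order $\alpha_k^2 p_k n^2$. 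Summing over $k$ with $p_k \geq n$ using $\alpha_k^2 p_k = (p_k k \log k)^{-1}$ and the exponential growth of $p_k$ gives $\|Z_{La}(f)(n)\|_2^2 = O(n/(\log n \cdot \log \log n))$, which is $o(n/\sqrt{\log n})$. When $d_k \leq n$, writing $S_n(f_k) = \sum_l (c_l - c_{l-d_k}) U^l \bar{f}_k$ shows that the long plateau of height $p_k$ in $c_l$ cancels in the middle, leaving two boundary pieces of width $\sim d_k$ and height $\sim p_k$, so $\|S_n(f_k)\|_2^2$ is of order $\alpha_k^2 d_k p_k^2 = d_k/(k \log k)$. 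Since $d_k = 2^{k^2}$ grows super-exponentially, the sum over $k$ with $d_k \leq n$ is controlled by the largest such index $k^\ast \sim \sqrt{\log n}$, producing $\|Z_{Sm}(f)(n)\|_2^2 = O(n/\sqrt{\log n})$ and completing the proof.

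The main obstacle is the coefficient bookkeeping in the variance estimates: near $k = k^\ast$ the range of shifts of $\bar{f}_k$ used by $S_n(f_k)$ can approach the extremal i.i.d.\ range $[0, 2d_k + p_k]$ from \ref{item_b:bar_f}, so the top boundary case must be treated with care. It is precisely to create slack at this point that one separates the middle index set $\{k : p_k < n < d_k\}$ into the block $\hat{Y}_f(n)$, which does not enter the bound but cleanly decouples $Z_{Sm}$ from $Z_{La}$; verifying that this decoupling survives the more delicate counting for $k$'s close to the thresholds $p_k \approx n$ and $d_k \approx n$ is the key technical point.
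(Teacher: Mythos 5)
The paper does not actually prove this lemma: it is imported verbatim as \cite[Lemma 7]{MR4374685}, and only its $D$-dimensional analogue (\cref{lem:first_bound_in_KV_multi_d}) is proved here, by reducing component-wise to the one-dimensional statement. Your self-contained argument follows the natural route, the same one visible in the analogous computations the paper does carry out (\cref{prop:third_bound_in_KV} and \cref{prop:Similar_to_second_bound_in_KV}): joint independence of the three blocks by iterating property \ref{item_c:bar_f} level by level, and the variance bound by counting the trapezoidal coefficients, giving $\|Z_{La}(f)(n)\|_2^2=O\left(n/(\log n\,\log\log n)\right)$ and $\|Z_{Sm}(f)(n)\|_2^2=O\left(n/(\sqrt{\log n}\,\log\log n)\right)$. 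All the orders of magnitude check out.

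One step needs an extra line, and your closing diagnosis of where the care is required is slightly off. For $k$ with $d_k\le n$, the coefficients of $S_n(f_k)$ are supported on $[0,\,n+d_k+p_k-2]$, which exceeds the window $[0,\,2d_k+p_k]$ on which property \ref{item_b:bar_f} guarantees that the shifts $U^j\bar{f}_k$ are i.i.d.\ as soon as $n\ge d_k+3$; so this affects essentially every $k$ in the $Z_{Sm}$ range (most severely the small $k$), not just $k$ near the threshold $d_k\approx n$. Consequently one cannot directly write $\|S_n(f_k)\|_2^2=\alpha_k^2\sum_l\left(c_l-c_{l-d_k}\right)^2$. The fix is immediate: the nonzero coefficients lie in the two windows $[0,\,d_k+p_k-2]$ and $[n,\,n+d_k+p_k-2]$, each of width smaller than $2d_k+p_k$; within each window the shifts are i.i.d.\ with mean zero (the second window by stationarity of $T$ combined with property \ref{item_b:bar_f}), so each partial sum has second moment $O\left(\alpha_k^2 d_k p_k^2\right)$, and Minkowski's inequality gives the same bound for $S_n(f_k)$ up to a factor of $4$. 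With that patch the proof is complete. Your final remark that the middle block $\hat{Y}_f(n)$ is introduced ``to decouple $Z_{Sm}$ from $Z_{La}$'' has the logic backwards: $\hat{Y}_f(n)$ is the main term carrying the local limit theorem, and the content of the lemma is precisely that the other two blocks are negligible in $L^2$.
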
 Essentially, with the aid of \cref{prop:bla} in \Cref{App_1}, this lemma says that if one can prove the local central limit theorem for $\hat{Y}_f(n)$, then one would obtain it for $S_n(f)$. The next stage is to look at the main contributing terms in $\hat{Y}_f(n)$. To that end, for $n \in \N$ and $j\in\{1,\ldots,n\}$, set
\[
Y_j(f,n):=\sum_{\{k: p_k\leq j+1,\ p_k<n<d_k\}} p_k \left(U^j\bar{f}_k-U^{d_k+j}\bar{f}_k\right),
\]
and 
\[
\mathscr{W}_f(n):=\sum_{j=1}^{n-1} Y_j(f,n).
\]
\begin{prop}
\label{prop:second_bound_in_KV}\cite[Proposition 9]{MR4374685} For every $n\in\mathbb{N}$, the random variables $\mathscr{W}_f(n)$ and $S_n(f)-\mathscr{W}_f(n)$ are independent and
\[
\left\|S_n(f)-\mathscr{W}_f(n)\right\|_2^2=O\left(\frac{n}{\sqrt{\log(n)}}\right). 
\]
\end{prop}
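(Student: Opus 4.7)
First, decompose
\[
S_n(f)-\mathscr{W}_f(n) \;=\; Z_{Sm}(f)(n) + Z_{La}(f)(n) + \bigl(\hat{Y}_f(n) - \mathscr{W}_f(n)\bigr);
\]
\cref{lem:first_bound_in_KV} handles the first two summands directly. The substance lies in $\hat{Y}_f(n)-\mathscr{W}_f(n)$, which I would approach by swapping the $j$- and $k$-sums in $\mathscr{W}_f(n)=\sum_{j=1}^{n-1}Y_j(f,n)$ to write $\mathscr{W}_f(n)=\sum_{\{k\,:\,p_k<n<d_k\}}\mathscr{W}_f(n)_k$, where
\[
\mathscr{W}_f(n)_k \;:=\; p_k\sum_{j=p_k-1}^{n-1}\bigl(U^j\bar{f}_k-U^{d_k+j}\bar{f}_k\bigr).
\]

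Next, expand $S_n(f_k)=\sum_{l=0}^{n-1}\sum_{j=0}^{p_k-1}(U^{l+j}-U^{l+j+d_k})\bar{f}_k$ by the change of variable $m=l+j$. The coefficient of $U^m\bar{f}_k$ is trapezoidal --- rising $1,2,\ldots,p_k-1$ on $m\in[0,p_k-2]$, flat at $p_k$ on the plateau $m\in[p_k-1,n-1]$, descending $p_k-1,\ldots,1$ on $m\in[n,n+p_k-2]$ --- together with the sign-reversed $d_k$-shifted version from the $U^{m+d_k}\bar{f}_k$ part. The two plateau pieces coincide exactly with $\mathscr{W}_f(n)_k$, so $S_n(f_k)-\mathscr{W}_f(n)_k$ reduces to four triangularly-weighted edge blocks supported on the disjoint intervals $[0,p_k-2]$, $[n,n+p_k-2]$, $[d_k,d_k+p_k-2]$, $[d_k+n,d_k+n+p_k-2]$ (disjoint because $p_k<n<d_k$ and $d_k\gg p_k$), all lying inside $[0,2d_k+p_k]$.

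For the $L^2$ bound, property \ref{item_b:bar_f} yields that the $\bar{f}_k\circ T^m$'s indexed by these four intervals are i.i.d.\ with variance $\alpha_k^2$, giving the $k$-th edge variance $4\alpha_k^2\sum_{j=1}^{p_k-1}j^2=O(\alpha_k^2 p_k^3)=O(p_k/(k\log k))$. Property \ref{item_c:bar_f} then makes the edge blocks for distinct middle $k$ independent, so variances add; since the middle range $\{k:p_k<n<d_k\}$ is approximately $\sqrt{\log n}\lesssim k\lesssim\log n$ and the summands grow geometrically, the total is $O(n/(\log n\log\log n))=o(n/\sqrt{\log n})$, which combined with \cref{lem:first_bound_in_KV} completes the estimate.

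For the independence claim, both $\mathscr{W}_f(n)$ and $S_n(f)-\mathscr{W}_f(n)$ are measurable with respect to disjoint sub-families of the base random variables $\{\bar{f}_k\circ T^m\}$: within each middle $k$ the plateau and edge $m$-intervals are disjoint, so \ref{item_b:bar_f} makes the corresponding r.v.'s independent; across distinct $k$'s, \ref{item_c:bar_f} supplies cross-level independence. The main organisational obstacle is simply to verify that every iterate appearing in either piece lies inside the window $m\le 2d_k+p_k$ where \ref{item_b:bar_f} and \ref{item_c:bar_f} apply; this is immediate for middle and large $k$ from $n<d_k$, and for the small-$k$ contribution inside $Z_{Sm}(f)(n)$ it relies on the overlap-cancellation trick already used in \cite{MR4374685} to keep the effective iterate range bounded by $2d_k+p_k$.
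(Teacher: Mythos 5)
Your proof is correct and follows essentially the same route the paper takes for the multidimensional analogue (\cref{prop:Similar_to_second_bound_in_KV}): the trapezoidal expansion of $S_n(f_k)$ into a plateau equal to $B_k-U^{d_k}B_k$ plus four triangularly weighted edge blocks (the paper's $A_k(n)$, $C_k(n)$ and their $d_k$-shifts), the $O(p_k^3\alpha_k^2)$ variance bound on each edge, and independence via disjoint iterate windows under properties \ref{item_b:bar_f} and \ref{item_c:bar_f}. No substantive differences to report.
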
 We note that writing $B_k:=p_k\sum_{j=p_k-1}^{n-1}U^j\bar{f}_k$, we have
\[
\mathscr{W}_f(n)=\sum_{\{k:\ p_k<n<d_k\}}\left(B_k-U^{d_k}B_k\right).
\]
Now we set,
\[
\hat{I}_n:=\{k\in 2\N:\ p_{k+1}<n<d_k\}.
\]
For $k\in \hat{I}_n$, let
\[
V_k:=\sum_{i=2^k}^{n-1}\left(p_k\left(U^i\bar{f}_k-U^{d_k+i}\bar{f}_k\right)+p_{k+1}\left(U^i\bar{f}_{k+1}-U^{d_{k+1}+i}\bar{f}_{k+1}\right)\right). 
\] Finally, we set
\[
\mathsf{U}(f,n):=\sum_{k\in \hat{I}_n}V_k.
\]
In \cite{MR4374685} the local CLT is proved for the analogue of $\mathsf{U}(f,n)$ and then this is used for the deduction of local CLT for $S_n(f)$.
\begin{remark}\label{rem:change_of_U_term}
In \cite{MR4374685}, $I_n$ is defined as the set of all even integers $k$ such that $p_k<n<d_k$. After this the function $\mathsf{U}_n:=\sum_{k\in I_n}V_k$ is defined. 

Clearly for all $n\in \N$, $\hat{I}_n\subset I_n$. When $\log(n-1)\notin 2\N$,  $I_n$ and $\hat{I}_n$ coincide and $\mathsf{U}_n=\mathsf{U}(f,n)$. When $\log(n-1)\in 2\N$, then $I_n\setminus\hat{I}_n=\log(n-1)$ and
\[
\mathsf{U}_n-\mathsf{U}(f,n)=V_{\log(n-1)}.
\]
\end{remark}The following is a minor correction of the statement \cite[Proposition 11]{MR4374685}. We reproduce the correction of the proof here. 

\begin{prop}
\label{prop:third_bound_in_KV}
For every $n\in\mathbb{N}$, the random variables $\mathsf{U}(f,n)$ and 
$E_n:=\mathscr{W}_f(n)-\mathsf{U}(f,n)$
are independent and 
\[
\left\|E_n\right\|_2^2=O\left(\frac{n}{\sqrt{\log(n)}}\right). 
\] 
\end{prop}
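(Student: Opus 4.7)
The plan is to expand $E_n$ as a sum of ``boundary corrections'' plus ``unpaired'' contributions, to verify its independence from $\mathsf{U}(f,n)$ using the layered independence \ref{item_a:bar_f}--\ref{item_c:bar_f}, and to estimate its $L^2$-norm layer by layer. Setting $J_n := \{k \in \N : p_k < n < d_k\}$, we have $\mathscr{W}_f(n) = \sum_{k \in J_n}(B_k - U^{d_k}B_k)$. For each even $k \in \hat{I}_n$, both $k$ and $k+1$ lie in $J_n$, and $V_k$ almost agrees with $(B_k - U^{d_k}B_k) + (B_{k+1} - U^{d_{k+1}}B_{k+1})$; the discrepancy comes only from the lower summation bounds -- $V_k$ sums from $i = 2^k$ for both components, while $B_k$ starts at $j = 2^k - 1$ and $B_{k+1}$ at $j = 2^{k+1}$. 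The elements of $J_n$ not covered by any pair in $\hat{I}_n \cup (\hat{I}_n + 1)$ are at most two: an even $j^*$ with $p_{j^*} < n \leq p_{j^* + 1}$ (present exactly when $\log(n-1) \in 2\N$), and an odd $j^{**}$ with $d_{j^{**} - 1} \leq n < d_{j^{**}}$. Thus
\[
E_n = \sum_{k \in \hat{I}_n} e_k + \sum_{j \in J_n \setminus (\hat{I}_n \cup (\hat{I}_n + 1))}(B_j - U^{d_j}B_j),
\]
with each $e_k$ involving $\bar{f}_k \circ T^{2^k - 1}$ (and its $d_k$-translate) together with $\bar{f}_{k+1} \circ T^i$ (and the $d_{k+1}$-translates) for $i$ in the short window $[2^k, 2^{k+1} - 1]$.

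For the independence of $\mathsf{U}(f,n)$ and $E_n$, I would regroup $V_k = \widetilde{V}_k + W_k$, where $\widetilde{V}_k := (B_k - U^{d_k}B_k) + (B_{k+1} - U^{d_{k+1}}B_{k+1})$ is the ``fully paired'' part and $W_k := p_{k+1}\sum_{i=2^k}^{2^{k+1}-1}(U^i - U^{d_{k+1}+i})\bar{f}_{k+1}$ is the ``overflow''; the latter is absorbed into $-e_k$ on the $E_n$-side. After this regrouping, $\mathsf{U}(f,n)$ is a function of $\bar{f}_k \circ T^i$ for $k \in \hat{I}_n$ and $i \in [2^k, n-1] \cup (d_k + [2^k, n-1])$, and of $\bar{f}_{k+1} \circ T^i$ for $i \in [2^{k+1}, n-1] \cup (d_{k+1} + [2^{k+1}, n-1])$. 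The correction $e_k$ uses only shift $2^k - 1$ of $\bar{f}_k$ and the segment $[2^k, 2^{k+1}-1]$ of $\bar{f}_{k+1}$ -- disjoint from the shifts used by $\mathsf{U}(f,n)$ -- while the unpaired terms draw from layers $j^*, j^{**} \notin \hat{I}_n \cup (\hat{I}_n+1)$. Properties \ref{item_b:bar_f} and \ref{item_c:bar_f} then yield independence of the generating sub-$\sigma$-algebras.

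For the $L^2$ estimate, independence of the $e_k$ across distinct pairs of layers and of the unpaired contributions gives
\[
\|E_n\|_2^2 = \sum_{k \in \hat{I}_n}\|e_k\|_2^2 + \sum_{j \text{ unpaired}} \|B_j - U^{d_j}B_j\|_2^2.
\]
Since $\alpha_k^2 = 1/(p_k^2 k \log k)$, a direct count (the $\bar{f}_{k+1}$-part dominates with $\asymp 2^k$ independent copies of variance $\alpha_{k+1}^2$ and multiplier $p_{k+1}^2$) gives $\|e_k\|_2^2 \lesssim 2^k/(k \log k)$, and the resulting geometric sum over even $k \in (\sqrt{\log n}, \log n)$ is dominated by its top term $\asymp n/(\log n \log \log n)$. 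Each unpaired term contributes $\lesssim p_j^2 \cdot n \cdot \alpha_j^2 \lesssim n/(j \log j)$, which is also $o(n/\sqrt{\log n})$ since $j \asymp \sqrt{\log n}$ or $\log n$. The main subtlety -- and the reason for the correction to $\hat{I}_n$ noted in \cref{rem:change_of_U_term} -- is precisely the regrouping performed in the independence step: without it, $E_n = \mathscr{W}_f(n) - \mathsf{U}(f,n)$ would carry $\bar{f}_{k+1}$-shifts shared with $V_k$, breaking independence.
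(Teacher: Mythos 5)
Your overall strategy---expand $E_n$ term by term, derive independence from disjointness of the underlying independent variables $\bar f_k\circ T^j$, and bound $\|E_n\|_2^2$ by summing variances---is the same as the paper's, and your bookkeeping of the two unpaired levels (the even $j^*$ with $p_{j^*}<n\le p_{j^*+1}$ and the odd $j^{**}$ with $d_{j^{**}-1}\le n<d_{j^{**}}$) together with the single correction $p_k\bigl(U^{p_k-1}\bar f_k-U^{p_k-1+d_k}\bar f_k\bigr)$ for each $k\in\hat I_n$ reproduces exactly the three contributions in \eqref{eq:E_n_sum}; your $L^2$ estimates are all of the correct order.

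The gap is in your treatment of the ``overflow'' $W_k$. Writing $V_k=\widetilde V_k+W_k$ is only a relabelling: it does not change the random variable $\mathsf{U}(f,n)=\sum_{k\in\hat I_n}V_k$, which---if $W_k\neq 0$---genuinely depends on $\bar f_{k+1}\circ T^i$ for $i\in[2^k,2^{k+1}-1]$, i.e.\ on exactly the variables you then place inside $e_k\subset E_n$. Hence your assertion that these shifts are ``disjoint from the shifts used by $\mathsf{U}(f,n)$'' is false, and the disjoint-support argument collapses. Worse, a nonzero overflow would contradict the proposition itself: the summands $\xi_{l,i}:=U^i\bar f_l-U^{d_l+i}\bar f_l$ are mutually orthogonal over distinct pairs $(l,i)$ in the relevant ranges and carry the same coefficient $p_l$ in $\mathscr{W}_f(n)$ and in $\mathsf{U}(f,n)$, so $\mathrm{Cov}\bigl(\mathsf{U}(f,n),E_n\bigr)=-\sum p_l^2\|\xi_{l,i}\|_2^2$, the sum running over the terms of $\mathsf{U}(f,n)$ not present in $\mathscr{W}_f(n)$; this is strictly negative whenever the overflow is nonempty, so independence could not hold. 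What makes the proposition true---and what the paper's proof uses when it asserts that no term of $\mathsf{U}(f,n)$ lies outside $\mathscr{W}_f(n)$---is that the $\bar f_{k+1}$-part of $V_k$ cancels \emph{exactly} against $B_{k+1}-U^{d_{k+1}}B_{k+1}$ (its summation effectively beginning at $p_{k+1}-1=2^{k+1}$), leaving the single $\bar f_k\circ T^{p_k-1}$ term as the only per-$k$ correction. You need to establish that the overflow vanishes, not absorb a nonzero $W_k$ into $E_n$ by fiat.
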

\begin{proof}
There are three types of terms that appear in $\mathscr{W}_f(n)$ and not in $\mathsf{U}(f,n)$, and there are no terms that appear in $\mathsf{U}(f,n)$ and not in $\mathscr{W}_f(n)$. 

The first term comes from the case where for some even $k$, $d_k\leq n<d_{k+1}$. In this case $p_{k+1}<n<d_{k+1}$ and $B_{k+1}-U^{d_{k+1}}B_{k+1}$ appears in $\mathscr{W}_f(n)$ and not in $\mathsf{U}(f,n)$. When this term appears, then $\sqrt{\log(n)}<k+1$. As $B_{k+1}-U^{d_{k+1}}B_{k+1}$ is a sum of $n-p_{k+1}$ square integrable, zero mean functions, this implies that
\begin{align*}
1_{\left[d_k\leq n<d_{k+1}\right]}\left\|B_{k+1}-U^{d_{k+1}}B_{k+1}\right\|_2^2 &=\sum_{j=p_{k+1}-1}^{n-1}p_{k+1}^2\left\|U^j\left(\bar{f}_{k+1}-U^{d_{k+1}}\bar{f}_{k+1}\right)\right\|_2^2\\
&\leq 4np_{k+1}^2\left\|\bar{f}_{k+1}\right\|_2^2\\
&=4np_{k+1}^2\alpha_{k+1}^2=\frac{4n}{k+1}=O\left(\frac{n}{\sqrt{\log(n)}}\right).
\end{align*}
The second term appears when $\log(n-1)=k\in 2\N$. In this case $p_k<n=p_{k+1}<d_k$ so $B_k-U^{d_k}B_k$ appears in $\mathscr{W}_f(n)$ and not in $\mathsf{U}(f,n)$. Similarly to before,
\begin{align*}
1_{\left[p_k< n=p_{k+1}\right]}\left\|B_{k}-U^{d_{k}}B_{k}\right\|_2^2 &=\sum_{j=p_{k}-1}^{n-1}p_{k}^2\left\|U^j\left(\bar{f}_{k}-U^{d_{k}}\bar{f}_{k}\right)\right\|_2^2\\
&\leq 4np_{k}^2\left|\|\bar{f}_{k}\right\|_2^2\\
&=4np_{k}^2\alpha_{k}^2=\frac{4n}{k}=O\left(\frac{n}{\log(n)}\right).
\end{align*}
The third contribution to $E_n$ comes from the fact that for $k\in \hat{I}_n$, $p_k\left(U^{p_k-1}\bar{f}_k-U^{p_k-1+d_k}\bar{f}_k\right)$ appears in $B_k-U^{d_k}B_k$, hence in $\mathscr{W}_f(n)$, and not in $\mathsf{U}(f,n)$. 
We can conclude that\begin{equation}\label{eq:E_n_sum}
\begin{split}
E_n&=\sum_{k\in \hat{I}_n}p_k\left(U^{p_k-1}\bar{f}_k-U^{p_k-1+d_k}\bar{f}_k\right)+1_{\left[\exists k\in 2\N: d_k\leq n<d_{k+1}\right]}\left(B_{k+1}-U^{d_{k+1}}B_{k+1}\right) \\
&\ \ \ \ +1_{\left[\exists k\in 2\N:\ p_k<n = p_{k+1}\right]}\left(B_{k}-U^{d_{k}}B_{k}\right).
\end{split}
\end{equation}
Both $E_n$ and $\mathsf{U}(f,n)$ are functions of the independent\footnote{by properties \ref{item_b:bar_f} and \ref{item_c:bar_f} in the construction of $\bar{f}_k$.} sequence of functions
\[
\mathscr{L}:=\left\{\bar{f}_k\circ T^j: k\in \N, p_k<n<d_k,  0\leq j\leq 2d_k+p_k\right\}.
\]
In addition there exists two disjoint subsets $\mathscr{A},\mathscr{B}\subset\mathscr{L}$ such that $E_n$ is a function of $\mathscr{A}$ and $\mathsf{U}(f,n)$ is a function of $\mathscr{B}$. The independence of $E_n$ and $\mathsf{U}(f,n)$ follows from this as the functions in $\mathscr{L}$ are independent\footnote{In the proof of \cite[Proposition 11]{MR4374685} the problem is that $A(n)$ (see \cite[page 558]{MR4374685}) may appear in both $\mathrm{U}_n$ and $E_n$.}.

It is easy to see that the three terms in \eqref{eq:E_n_sum} above are independent and are square integrable and have zero mean, consequently 
\begin{align*}
\left\|E_n\right\|_2^2&=\left\|\sum_{k\in \hat{I}_n}p_k\left(U^{p_k-1}\bar{f}_k-U^{p_k-1+d_k}\bar{f}_k\right)\right\|_2^2+1_{\left[\exists !k\in 2\N: d_k\leq n<d_{k+1}\right]}\left\|B_{k+1}-U^{d_{k+1}}B_{k+1}\right\|_2^2\\
&\ \ \ \ +1_{\left[\exists k\in 2\N:\ p_k=n<p_{k+1}\right]}\left\|B_{k}-U^{d_{k}}B_{k}\right\|_2^2\\
&=\sum_{k\in \hat{I}_n}p_k^2\left\|\left(U^{p_k-1}\bar{f}_k-U^{p_k-1+d_k}\bar{f}_k\right)\right\|_2^2+O\left(\frac{n}{\sqrt{\log(n)}}\right)+O\left(\frac{n}{\log(n)}\right)\\
&\leq O\left(\frac{n}{\sqrt{\log(n)}}\right)+\sum_{k\in \hat{I}_n}4p_k^2\alpha_k^2\\
&\leq O\left(\frac{n}{\sqrt{\log(n)}}\right)+\sum_{k\in \hat{I}_n}\frac{4}{k}=O\left(\frac{n}{\sqrt{\log(n)}}\right).
\end{align*} \end{proof}

In \cite[Theorem 13]{MR4374685} one proves the local limit theorem for $\mathsf{U}_n$ using the fact that it satisfies the CLT (\cite[Corollary 12]{MR4374685}) together with two lemmas (\cite[Lemmas 14 and Lemma 15]{MR4374685}) regarding the Fourier transform of $\mathsf{U}_n$. In \Cref{Appendix B}, we prove analogous statements for $\mathsf{U}(f,n)$ (see \cref{thm:U_f_n_all_estimates}) and the following theorem.
\begin{thm}\label{thm:LCLT_for_U}
 Writing $\sigma^2=2(\ln 2)^2$ then,
 \[
 \sup_{x\in\Z}\left|\sqrt{n}\mu(\mathsf{U}(f,n)=x)-\frac{1}{\sqrt{2\pi\sigma^2}}e^{-\frac{x^2}{2n\sigma^2}}\right|=o(1). 
 \]
\end{thm}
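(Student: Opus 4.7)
The plan is to deduce the local CLT for $\mathsf{U}(f,n)$ from two ingredients that \cref{thm:U_f_n_all_estimates} in \Cref{Appendix B} is designed to supply: the ordinary CLT for $\mathsf{U}(f,n)/\sqrt{n}$, and uniform upper bounds on the characteristic function $\phi_n(t):=\int_X e^{it\mathsf{U}(f,n)}\,dm$. These are combined through the standard Fourier–inversion scheme. Since $\mathsf{U}(f,n)$ is $\Z$-valued, Fourier inversion on $\T$ gives
\[
m\bigl(\mathsf{U}(f,n)=x\bigr) \;=\; \frac{1}{2\pi}\int_{-\pi}^{\pi} e^{-itx}\phi_n(t)\,dt,
\]
while the Gaussian density admits the analogous representation on $\R$ with characteristic function $e^{-n\sigma^2 t^2/2}$. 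Substituting $t = u/\sqrt{n}$, subtracting, taking the supremum over $x\in\Z$, and absorbing the Gaussian tail outside $[-\pi\sqrt{n},\pi\sqrt{n}]$ into an error of size $O(e^{-cn})$ reduces the theorem to showing
\[
\int_{-\pi\sqrt{n}}^{\pi\sqrt{n}}\bigl|\phi_n(u/\sqrt{n}) - e^{-\sigma^2 u^2/2}\bigr|\,du \xrightarrow[n\to\infty]{}0.
\]

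To control this integral I would split $[-\pi\sqrt{n},\pi\sqrt{n}]$ into three regimes, mirroring the structure of \cite[Theorem 13]{MR4374685}. On the compact region $|u|\leq A$, with $A$ arbitrarily large but fixed, the CLT for $\mathsf{U}(f,n)/\sqrt{n}$ contained in \cref{thm:U_f_n_all_estimates} combined with L\'evy's continuity theorem forces $\phi_n(u/\sqrt{n})\to e^{-\sigma^2 u^2/2}$ uniformly on $[-A,A]$, so this contribution is $o_n(1)$ for every fixed $A$. On the intermediate region $A<|u|\leq \delta\sqrt{n}$, the analogue of \cite[Lemma 14]{MR4374685} provided in \Cref{Appendix B} should give a Gaussian-type bound $|\phi_n(u/\sqrt{n})|\leq e^{-cu^2}$, so the integrand is dominated by $e^{-cu^2}+e^{-\sigma^2 u^2/2}$, an integrable majorant whose contribution on $|u|>A$ tends to $0$ as $A\to\infty$. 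On the far region $\delta\sqrt{n}<|u|\leq \pi\sqrt{n}$, the analogue of \cite[Lemma 15]{MR4374685} should provide $\sup_{\delta\leq|t|\leq\pi}|\phi_n(t)|\leq e^{-cn}$, making this contribution exponentially small.

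The key structural input for both Fourier estimates is that $\mathsf{U}(f,n)=\sum_{k\in\hat{I}_n}V_k$ is a sum of independent random variables (by properties \ref{item_b:bar_f} and \ref{item_c:bar_f} of the $\bar f_k$), so that $\phi_n(t)=\prod_{k\in\hat{I}_n}\mathbb{E}[e^{itV_k}]$. Each $V_k$ is itself a long sum of independent $\{-p_k,0,p_k\}$- and $\{-p_{k+1},0,p_{k+1}\}$-valued functions, and the arithmetic design $p_k\in\{2^k,2^k+1\}$ with $p_k$ even and $p_{k+1}$ odd is exactly what prevents $|\mathbb{E}[e^{itV_k}]|$ from lingering near $1$ for $t\in[-\pi,\pi]\setminus\{0\}$. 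The main obstacle is therefore in \Cref{Appendix B}: one must verify that the slightly trimmed set $\hat{I}_n$ — obtained from $I_n$ in \cite{MR4374685} by dropping the borderline index $\log(n-1)$ when $\log(n-1)\in 2\N$, cf.\ \cref{rem:change_of_U_term} — still produces the same oscillatory cancellations needed for the Lemma 14 and Lemma 15 type bounds. Once \cref{thm:U_f_n_all_estimates} is established, the three-zone splitting above yields the $o(1)$ estimate, completing the proof.
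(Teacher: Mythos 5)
Your proposal is correct and follows essentially the same route as the paper: Fourier inversion for the lattice variable $\mathsf{U}(f,n)$, reduction to the convergence of $\int_{-\pi\sqrt{n}}^{\pi\sqrt{n}}\bigl|\psi_n(x/\sqrt{n})-e^{-\sigma^2x^2/2}\bigr|\,dx$ to zero, and then the CLT plus the characteristic-function bounds of \cref{thm:U_f_n_all_estimates}. The only differences are cosmetic: the paper packages the final integral estimate as one application of dominated convergence rather than an explicit three-zone splitting, and it obtains the Fourier bounds for the trimmed sum not by redoing the Lemma~14/15 analysis over $\hat{I}_n$ but by writing $\psi_n=\phi_n/\varphi_n$ and showing the characteristic function $\varphi_n$ of the single discarded term $V_{\log(n-1)}$ is uniformly bounded below by $\tfrac12$ (\cref{lem:junk_disappear}), which transfers the known bounds at the cost of a factor of $2$.
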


\begin{proof} See \Cref{Appendix B}. 
\end{proof}

\subsection{Proof of Theorem \ref{thm:LCLT_1}}
Let $F:=\left(f^{(1)},\ldots,f^{(D)}\right)$ be the function as in \eqref{eq:def_function_F}. The proof starts by writing $S_n(F)$ as a sum of three terms depending on the scale of $k$ with respect to $n$. That is
\[
S_n(F)=Z_{SM}(F)(n)+\hat{Y}_F(n)+Z_{La}(F)(n),
\]
where 
\begin{align*}
Z_{SM}(F)(n)&:= \sum_{\{k: d_k\leq n\}}S_n\left(F_k\right)\\
\hat{Y}_F(n)&:= \sum_{\{k: p_k<n<d_k\}}S_n\left(F_k\right)\\
Z_{La}(F)(n)&:= \sum_{\{k: n\leq p_k\}}S_n\left(F_k\right). 
\end{align*}
Here $F_k$ is the function from \eqref{eq:def_function_F_k}. 
\begin{lemma}\label{lem:first_bound_in_KV_multi_d} For every $n\in\mathbb{N}$, the random variables $Z_{Sm}(F)(n),\hat{Y}_F(n),Z_{La}(F)(n)$ are independent and 
\[
\left\|Z_{Sm}(F)(n)+Z_{La}(F)(n)\right\|_2^2=O\left(\frac{n}{\sqrt{\log(n)}}\right). 
\]
\end{lemma}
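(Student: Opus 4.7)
The plan is to reduce both assertions to the one-dimensional Lemma~\ref{lem:first_bound_in_KV}, leveraging the joint independence of the family $\{\bar{f}_k^{(i)}:\,k\in\N,\,1\le i\le D\}$ guaranteed by \cref{prop:functions_defn}.

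For the $L^2$ estimate, the $i$-th coordinate of the $\Z^D$-valued random variable $Z_{Sm}(F)(n)+Z_{La}(F)(n)$ is, by construction, exactly $Z_{Sm}(f^{(i)})(n)+Z_{La}(f^{(i)})(n)$. Hence the Euclidean norm decomposes coordinate-wise,
\[
\bigl\|Z_{Sm}(F)(n)+Z_{La}(F)(n)\bigr\|_2^2=\sum_{i=1}^{D}\bigl\|Z_{Sm}(f^{(i)})(n)+Z_{La}(f^{(i)})(n)\bigr\|_2^2.
\]
By \cref{cor:f_i} each $f^{(i)}$ has the same joint distribution as the one-dimensional function $f$ of \cite{MR4374685}, so \cref{lem:first_bound_in_KV} bounds each summand on the right by $O(n/\sqrt{\log n})$ and the sum of $D$ such terms yields the claimed estimate.

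For the independence, each $S_n(F_k)$ is a measurable function of $\{\bar{f}_k^{(i)}\circ T^j:\,1\le i\le D,\,0\le j\le n+d_k+p_k-2\}$, and the three sums range over disjoint scale regimes of $k$: small ($d_k\le n$), medium ($p_k<n<d_k$), and large ($n\le p_k$). Property~\ref{prop_sec_c:functions_defn} of \cref{prop:functions_defn} states that, for each $k$, the family $\{\bar{f}_k^{(i)}\circ T^j:\,1\le i\le D,\,0\le j\le 2d_k+p_k\}$ is independent of the $\sigma$-algebra generated by the analogous families at smaller indices. For $k$ in the medium or large regime one has $n\le d_k$, so the shifts appearing in $S_n(F_k)$ already fit inside $[0,\,2d_k+p_k]$; for $k$ small and $k'$ medium or large with $k<k'$, one has $n+d_k+p_k\le 2n+p_k\le 2d_{k'}+p_{k'}$, using $d_k\le n<d_{k'}$ and $p_k\le p_{k'}$, so property \ref{prop_sec_c:functions_defn} applied at index $k'$ supplies the required independence of the small-scale data from the $k'$-scale data. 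Iterating this over the finitely many scales involved separates the three families completely, and hence $Z_{Sm}(F)(n),\,\hat{Y}_F(n),\,Z_{La}(F)(n)$ are mutually independent.

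The only real obstacle is the bookkeeping just indicated, namely checking that the shift windows required at each scale lie inside the independence ranges furnished by \cref{prop:functions_defn}\ref{prop_sec_c:functions_defn}. No additional analytic ingredient is needed beyond \cref{lem:first_bound_in_KV}; the multi-dimensional upgrade is absorbed by property~\ref{prop_sec_b:functions_defn}, which makes the $D$ coordinates behave as independent replicas of the one-dimensional construction and lets the coordinate-wise Pythagorean identity do all the work.
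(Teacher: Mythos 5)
Your proof is correct and follows essentially the same route as the paper: identify the disjoint families of basic variables $\bar{f}_k^{(i)}\circ T^j$ on which each of the three terms depends, check the shift windows fit inside the independence ranges of \cref{prop:functions_defn}\ref{prop_sec_b:functions_defn}--\ref{prop_sec_c:functions_defn}, and obtain the norm bound by the coordinate-wise Pythagorean identity combined with the one-dimensional \cref{lem:first_bound_in_KV}. Your bookkeeping for the small-scale terms is in fact slightly more explicit than the paper's, which simply notes the ranges and invokes property \ref{prop_sec_c:functions_defn}.
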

\begin{proof}
For each $k\in \N$, $S_n(F_k)$ is a function of 
\[
\left\{\bar{f}_k^{(i)}\circ T^j:\ 1\leq i\leq D,\ 0\leq j\leq d_k+p_k+n-1\right\}.
\]
For all the $k$'s in the terms appearing in the sums of $Z_{La}(F)$ and $\hat{Y}_F(n)$ one has $n<d_k$. This implies that $\hat{Y}_F(n)$ is a function of 
\[
\left\{\bar{f}_k^{(i)}\circ T^j:\ 1\leq i\leq D,\ p_k<n<d_k,\ 0\leq j\leq 2d_k+p_k-1\right\},
\]
and $Z_{La}(n)$ is a function of 
\[
\left\{\bar{f}_k^{(i)}\circ T^j:\ 1\leq i\leq D,\ n\leq p_k,\ 0\leq j\leq 2d_k+p_k-1\right\}.
\]
Writing $k*$ for the first integer such that $d_{k}>n$, $Z_{SM}(F)$ is a function of 
\[
\left\{\bar{f}_k^{(i)}\circ T^j:\ 1\leq i\leq D,\ 0\leq j\leq 2d_{k*}+p_{k*}-1\right\}.
\]
The independence of $Z_{Sm}(F)(n),\hat{Y}_F(n)$ and $Z_{La}(F)(n)$ follows from property \ref{prop_sec_c:functions_defn} in \cref{prop:functions_defn}.

For $1\leq i\leq D$, the sequence of functions in the definition of $f^{(i)}$ satisfies conditions \ref{item_a:bar_f}, \ref{item_b:bar_f} and \ref{item_c:bar_f} as in \cref{subsec:llt dim1}. By \cref{lem:first_bound_in_KV}, for $1\leq i\leq D$,
\[
\left\|Z_{Sm}\left(f^{(i)}\right)(n)+Z_{La}\left(f^{(i)}\right)(n)\right\|_2^2=O\left(\frac{n}{\sqrt{\log(n)}}\right). 
\]
Finally
\begin{align*}
\left\|Z_{Sm}(F)(n)+Z_{La}(F)(n)\right\|_2^2&=\sum_{i=1}^D\left\|Z_{Sm}\left(f^{(i)}\right)(n)+Z_{La}\left(f^{(i)}\right)(n)\right\|_2^2\\
&=O\left(\frac{n}{\sqrt{\log(n)}}\right).
\end{align*}
\end{proof} For $k\in\N$, let $\bar{F}_k:=\left(\bar{f}_k^{(1)},\bar{f}_k^{(2)},\ldots,\bar{f}_k^{(D)}\right)$. For $n \in \N$ and $j\in\{1,\ldots,n\}$, set
\[
Y_j(F,n):=\sum_{\{k: p_k\leq j+1,\ p_k<n<d_k\}} p_k\left(U^j\bar{F}_k-U^{d_k+j}\bar{F}_k\right),
\]
and $\mathscr{W}_F(n):=\sum_{j=1}^{n-1} Y_j(F,n)$. Similar to the 1-dimensional case, writing 
\[
B_k(n):=p_k\sum_{j=p_k-1}^{n-1}U^j\bar{F}_k,
\]
we have 
\[
\mathscr{W}_F(n)=\sum_{\{k:\ p_k<n<d_k\}}\left(B_k(n)-U^{d_k}B_k(n)\right). 
\]
\begin{prop}
\label{prop:Similar_to_second_bound_in_KV}
For every $n\in\mathbb{N}$, the random variables $\mathscr{W}_F(n)$ and $S_n(F)-\mathscr{W}_F(n)$ are independent and 
\[
\left\|S_n(F)-\mathscr{W}_F(n)\right\|_2^2=O\left(\frac{n}{\sqrt{\log(n)}}\right). 
\]
\end{prop}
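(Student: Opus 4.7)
The plan is to reduce the statement to the $1$-dimensional \cref{prop:second_bound_in_KV} by exploiting that $F=(f^{(1)},\dots,f^{(D)})$ splits by coordinates, while using the extra cross-coordinate independence from \cref{prop:functions_defn} for the independence claim. Since $S_n$ acts coordinatewise and $\bar{F}_k=(\bar{f}_k^{(1)},\dots,\bar{f}_k^{(D)})$, the definitions unfold to
\[
S_n(F)=\bigl(S_n(f^{(1)}),\dots,S_n(f^{(D)})\bigr),\qquad \mathscr{W}_F(n)=\bigl(\mathscr{W}_{f^{(1)}}(n),\dots,\mathscr{W}_{f^{(D)}}(n)\bigr),
\]
and therefore $\norm{S_n(F)-\mathscr{W}_F(n)}_2^2=\sum_{i=1}^D\norm{S_n(f^{(i)})-\mathscr{W}_{f^{(i)}}(n)}_2^2$. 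For each $i$ the sequence $(\bar{f}_k^{(i)})_{k\in\N}$ satisfies \ref{item_a:bar_f}--\ref{item_c:bar_f} of \cref{subsec:llt dim1}, so applying \cref{prop:second_bound_in_KV} coordinatewise and summing the $D$ estimates yields the required $O(n/\sqrt{\log n})$ bound.

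For independence I would locate the subfamilies of the ambient independent family
\[
\mathscr{L}:=\bigl\{\bar{f}_k^{(i)}\circ T^j:k\in\N,\ 1\leq i\leq D,\ 0\leq j\leq 2d_k+p_k\bigr\}
\]
on which $\mathscr{W}_F(n)$ and $S_n(F)-\mathscr{W}_F(n)$ are measurable, and verify that they are disjoint; the conclusion then follows from the mutual independence of the elements of $\mathscr{L}$ that is guaranteed by \cref{prop:functions_defn}\ref{prop_sec_b:functions_defn}--\ref{prop_sec_c:functions_defn}. For any level $k$ with $p_k<n<d_k$, the level-$k$ piece of $\mathscr{W}_F(n)$ is the ``middle block'' $p_k\sum_{j=p_k-1}^{n-1}(U^j-U^{d_k+j})\bar{F}_k$, supported on shifts in $[p_k-1,n-1]\cup[d_k+p_k-1,d_k+n-1]$. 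Expanding $S_n(F_k)=\sum_{m=0}^{n+p_k-2}c_k(m)(U^m-U^{m+d_k})\bar{F}_k$ with $c_k(m)=p_k$ on $[p_k-1,n-1]$ and $c_k(m)<p_k$ on the endpoint ranges $[0,p_k-2]\cup[n,n+p_k-2]$, the level-$k$ piece of $S_n(F)-\mathscr{W}_F(n)$ uses only the ``edge shifts'' $[0,p_k-2]\cup[n,n+p_k-2]$ together with their $d_k$-translates, which are disjoint from the middle block. Levels $k$ with $d_k\leq n$ or $n\leq p_k$ occur only in $S_n(F)-\mathscr{W}_F(n)$, and their cross-level independence from $\mathscr{W}_F(n)$ follows from \cref{prop:functions_defn}\ref{prop_sec_c:functions_defn}, once one checks that all the relevant shifts fit inside the independence window $[0,2d_{k'}+p_{k'}]$ at the higher level $k'$, as was already done in the proof of \cref{lem:first_bound_in_KV_multi_d}.

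The main obstacle is purely combinatorial: carrying out the bookkeeping of ``middle shifts'' versus ``edge shifts'' cleanly enough to confirm the disjoint-support claim at each shared level, and checking that the cross-level shifts stay within the independence windows provided by \cref{prop:functions_defn}. Once this is done, no new analytic input is required beyond $D$ applications of the $1$-dimensional \cref{prop:second_bound_in_KV}; the scheme is a routine lift of the $1$-dimensional proof to the vector-valued setting, made possible precisely by the joint construction of the coordinates of $F$ engineered in \cref{prop:functions_defn}.
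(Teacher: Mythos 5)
Your proposal is correct and follows essentially the same route as the paper: the independence is obtained exactly as in the paper's proof, by exhibiting $\mathscr{W}_F(n)$ and $S_n(F)-\mathscr{W}_F(n)$ as functions of disjoint subfamilies (middle block of shifts $p_k-1\leq j\leq n-1$ and their $d_k$-translates versus the edge shifts and the other levels) of the jointly independent family guaranteed by \cref{prop:functions_defn}. The only cosmetic difference is in the norm bound, where you sum $D$ applications of the one-dimensional \cref{prop:second_bound_in_KV} while the paper bounds the edge contribution $\mathscr{Z}_F(n)$ directly via the $A_k(n)$, $C_k(n)$ terms and then invokes \cref{lem:first_bound_in_KV_multi_d}; both give $O\left(n/\sqrt{\log(n)}\right)$ and the coordinatewise reduction is the same device the paper itself uses in \cref{lem:first_bound_in_KV_multi_d}.
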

\begin{proof}
For all $n \in\mathbb{N}$,
\begin{align*}
\widehat{Y}_F(n)&=\mathscr{W}_F(n)+\sum_{\{k:\ p_k<n<d_k\}}\left(A_k(n)+C_k(n)-U^{d_k}\left(A_k(n)+C_k(n)\right)\right)\\
&=:\mathscr{W}_F(n)+\mathscr{Z}_F(n),
\end{align*}
where,
\begin{align*}
A_k(n)&:=\sum_{j=0}^{p_k-2}(j+1)U^j\bar{F}_k,\\
C_k(n)&:=\sum_{j=n}^{n+p_k-2}\left(n+p_k-1-j\right)U^j\bar{F}_k.
\end{align*}
Now $\mathscr{W}_F(n)$ is a function of 
\[
\mathcal{L}_n:=\left\{U^j\bar{f}_k^{(i)}:\ p_k<n<d_k,\ i\in\{1,\ldots,D\},\ p_k-1\leq j\leq n-1\right\},
\] and as $p_k<n$, $\mathscr{Z}_F(n)$ is a function of 
\[
\left\{U^j\bar{f}_k^{(i)}:\ p_k<n<d_k,\ i\in\{1,\ldots,D\},\ 0\leq j\leq 2d_k+p_k\right\}\setminus \mathcal{L}_n,
\] the independence of $Z_{SM}(F)(n)$, $Z_{La}(F)(n)$, $\mathscr{Z}_F(n)$ and $\mathscr{W}_F(n)$ follows from properties \ref{prop_sec_b:functions_defn} and \ref{prop_sec_c:functions_defn} in \cref{prop:functions_defn}. As 
\begin{equation}\label{eq:S_n-W_n}
S_n(F)-\mathscr{W}_F(n)=Z_{SM}(F)(n)+Z_{La}(F)(n)+\mathscr{Z}_F(n)
\end{equation}
we have established the independence of $S_n(F)-\mathscr{W}_F(n)$ and $\mathscr{W}_F(n)$. For all $k\in\N$,
\begin{align*}
\left\|\bar{F}_k\right\|_2^2=\sum_{i=1}^D \left\|\bar{f}_k^{(i)}\right\|_2^2 
=D\mu\left(\bar{f}_k^{(1)}\neq 0\right)=D\alpha_k^2. 
\end{align*}
It follows from this and $\int F_k d\mu=0$ that 
\[
\left\|A_k(n)\right\|_2^2=\sum_{j=0}^{p_k-2}(j+1)^2\left\|F_k\right\|_2^2\leq Dp_k^3\alpha_k^2.
\]
Similarly, 
\[
\left\|C_k(n)\right\|_2^2=\sum_{j=n}^{n+p_k-2}\left(n+p_k-1-j\right)^2\left\|F_k\right\|_2^2\leq Dp_k^3\alpha_k^2.
\]
The collection of functions $A_k(n)$, $C_k(n)$, $U^{d_k}A_k(n)$, $U^{d_k}C_k(n)$ with $k$ in the range $p_k<n<d_k$ are independent and with integral $0$, consequently
\begin{align*}
\left\|\mathscr{Z}_F(n)\right\|_2^2&=\sum_{k:\ p_k<n<d_k}\left(\left\|A_k(n)\right\|_2^2+\left\|C_k(n)\right\|_2^2+\left\|U^{d_k}A_k(n)\right\|_2^2+\left\|U^{d_k}C_k(n)\right\|_2^2\right)\\
&\leq 4D\sum_{k:\ p_k<n<d_k}p_k^3\alpha_k^2\\
&\leq 4D\sum_{k:\ p_k<n<d_k}\frac{2^k}{k}=O\left(\frac{n}{\log(n)}\right).
\end{align*}
Taking in view \eqref{eq:S_n-W_n} and \cref{lem:first_bound_in_KV_multi_d}, we see that
\[
\left\|S_n(F)-\mathscr{W}_F(n)\right\|_2^2=O\left(\frac{n}{\sqrt{\log(n)}}\right).
\]\end{proof}
Recall that $\hat{I}_n:=\{k\in 2\N:\ p_{k+1}<n<d_k\}$. For $k\in \hat{I}_n$, let 
\[
V_{k}(F,n):=\sum_{i=2^k}^{n-1}\left(p_k\left(U^i\bar{F}_k-U^{d_k+i}\bar{F}_k\right)+p_{k+1}\left(U^i\bar{F}_{k+1}-U^{d_{k+1}+i}\bar{F}_{k+1}\right)\right). 
\]
Define,
\begin{equation}\label{eq: formula for U(F)}
\mathsf{U}(F,n):=\sum_{k\in \hat{I}_n}V_k(F,n)=\left(\mathsf{U}\left(f^{(1)},n\right),\mathsf{U}\left(f^{(2)},n\right),\ldots,\mathsf{U}\left(f^{(D)},n\right)\right).    
\end{equation}
The following is a multi-dimensional analogue of \cref{prop:third_bound_in_KV}.
\begin{prop}\label{prop:third_bound_in_KV_multi_D}
For every $n\in\mathbb{N}$, the random variables $\mathsf{U}(F,n)$, $S_n(F)-\mathscr{W}_F(n)$ and 
$E_n:=\mathscr{W}_F(n)-\mathsf{U}(F,n)$
are independent and 
\[
\left\|E_n\right\|_2^2=O\left(\frac{n}{\sqrt{\log(n)}}\right). 
\]
\end{prop}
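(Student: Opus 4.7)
The plan is to reduce the multi-dimensional assertion to the one-dimensional \cref{prop:third_bound_in_KV} by exploiting the fact that the construction of $F=(f^{(1)},\ldots,f^{(D)})$ is carried out coordinate-by-coordinate, while the cross-coordinate independence supplied by \cref{prop:functions_defn} lets the $D$ components be handled separately.

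First I would verify that, by \eqref{eq: formula for U(F)} and the definitions of $\mathscr{W}_F(n)$ and $E_n$, each of $\mathsf{U}(F,n)$, $\mathscr{W}_F(n)$, and $E_n$ decomposes componentwise: the $i$-th coordinate equals the corresponding 1-D object $\mathsf{U}(f^{(i)},n)$, $\mathscr{W}_{f^{(i)}}(n)$, and $E_n^{(i)}:=\mathscr{W}_{f^{(i)}}(n)-\mathsf{U}(f^{(i)},n)$, respectively. The 1-D \cref{prop:third_bound_in_KV} then yields $\|E_n^{(i)}\|_2^2=O(n/\sqrt{\log n})$ for each $i$, and since $\|E_n\|_2^2=\sum_{i=1}^D\|E_n^{(i)}\|_2^2$ with $D$ fixed, this gives the desired bound $\|E_n\|_2^2=O(n/\sqrt{\log n})$.

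To establish the triple independence I would track the underlying collections of shifts. For each fixed $i$, both $E_n^{(i)}$ and $\mathsf{U}(f^{(i)},n)$ are measurable with respect to the slice $\{\bar{f}_k^{(i)}\circ T^j: k\in\N,\ 0\leq j\leq 2d_k+p_k\}$, and the explicit decomposition in the proof of \cref{prop:third_bound_in_KV} shows that the set of $(k,j)$-indices supporting $E_n^{(i)}$ (namely the endpoint terms at $j=p_k-1$ together with the at most two boundary contributions $B_{k+1}-U^{d_{k+1}}B_{k+1}$ and $B_k-U^{d_k}B_k$) is disjoint from the set of $(k,j)$-indices supporting $\mathsf{U}(f^{(i)},n)$ (the bulk terms with $j\geq 2^k$). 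Combined with the jointly i.i.d. structure across $(k,i,j)$ coming from properties \ref{prop_sec_b:functions_defn} and \ref{prop_sec_c:functions_defn} of \cref{prop:functions_defn}, this gives independence of $\mathsf{U}(F,n)$ and $E_n$. Independence of $S_n(F)-\mathscr{W}_F(n)$ from both $\mathsf{U}(F,n)$ and $E_n$ follows from the identity $\mathsf{U}(F,n)+E_n=\mathscr{W}_F(n)$ together with the disjoint-support analysis already carried out in \cref{lem:first_bound_in_KV_multi_d} and \cref{prop:Similar_to_second_bound_in_KV}.

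The principal obstacle is the bookkeeping for the disjointness of the 1-D $(k,j)$-supports of $E_n^{(i)}$ and $\mathsf{U}(f^{(i)},n)$, which is precisely the point where the original argument in \cite{MR4374685} required the correction reproduced in the proof of \cref{prop:third_bound_in_KV}; however the multi-dimensional case adds no new subtlety, since the additional coordinate $i$ factors out cleanly due to the i.i.d. cross-coordinate structure built into \cref{prop:functions_defn}.
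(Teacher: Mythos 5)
Your proposal is correct and follows essentially the same route as the paper, whose own proof is just a reduction to the one-dimensional \cref{prop:third_bound_in_KV} and to the disjoint-support/independence analysis of \cref{prop:Similar_to_second_bound_in_KV}. Your coordinatewise decomposition $\|E_n\|_2^2=\sum_{i=1}^D\|E_n^{(i)}\|_2^2$ and the tracking of disjoint $(k,i,j)$-supports via properties \ref{prop_sec_b:functions_defn} and \ref{prop_sec_c:functions_defn} of \cref{prop:functions_defn} is exactly the intended argument.
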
 
\begin{proof}
The independence of $\mathsf{U}(F,n)$ and $E_n$ from $S_n(F)-\mathscr{W}_F(n)$ follows from the definition of $\mathsf{U}(f,n)$ and (the argument of) \cref{prop:Similar_to_second_bound_in_KV}. Arguing as in the proof of \cref{prop:third_bound_in_KV} one deduces the independence of $\mathsf{U}(F,n)$ and $E_n$ and the bound of $\left\|E_n\right\|_2^2$.
\end{proof}
We now turn to show the following local CLT for $\mathsf{U}(F,n)$. 
\begin{thm}\label{thm:LCLT_for_U_F_n}
\[
\sup_{x\in\Z^D} \left|n^{D/2}m\left(\mathsf{U}(F,n)=x\right)-\frac{1}{(2\pi \sigma^2)^{D/2}}e^{-\frac{\|x\|^2}{2n\sigma^2}}\right|\xrightarrow[n\to\infty]{}0.
\]
where $\sigma^2=2\left(\ln 2\right)^2$. 
\end{thm}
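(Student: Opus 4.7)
The plan is to reduce the $D$-dimensional statement to the one-dimensional result \cref{thm:LCLT_for_U}, using the coordinate decomposition recorded in \eqref{eq: formula for U(F)} together with the extra inter-coordinate independence built into \cref{prop:functions_defn}. The key observation is that, thanks to \eqref{eq: formula for U(F)}, the coordinates $\mathsf{U}(f^{(i)},n)$ of $\mathsf{U}(F,n)$ are themselves one-dimensional random variables of the type already handled in \cref{thm:LCLT_for_U}. The entire proof is then a factorization-plus-telescoping argument.

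First I would establish that the random variables $\mathsf{U}(f^{(1)},n),\ldots,\mathsf{U}(f^{(D)},n)$ are jointly independent. For each $i$, $\mathsf{U}(f^{(i)},n)$ is measurable with respect to $\sigma\{\bar{f}_k^{(i)}\circ T^j : k\in \hat{I}_n\cup(\hat{I}_n+1),\ 0\leq j\leq 2d_k+p_k\}$, and properties \ref{prop_sec_b:functions_defn} and \ref{prop_sec_c:functions_defn} of \cref{prop:functions_defn} (applied inductively across the levels $k$) imply that these $\sigma$-algebras are mutually independent as $i$ ranges over $\{1,\dots,D\}$. Since for each fixed $i$ the sequence $\bar{f}_k^{(i)}$ satisfies the hypotheses \ref{item_a:bar_f}--\ref{item_c:bar_f} of \cref{subsec:llt dim1}, the random variable $\mathsf{U}(f^{(i)},n)$ has the same law as the $\mathsf{U}(f,n)$ of the one-dimensional theory. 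In particular, writing
\[
a_n^{(i)}(y):=\sqrt{n}\,m\bigl(\mathsf{U}(f^{(i)},n)=y\bigr),\qquad b_n(y):=\frac{1}{\sqrt{2\pi\sigma^2}}\,e^{-y^{2}/(2n\sigma^2)},
\]
\cref{thm:LCLT_for_U} yields $\varepsilon_n:=\sup_{i,y}|a_n^{(i)}(y)-b_n(y)|\to 0$; both $a_n^{(i)}$ and $b_n$ are then uniformly bounded by some constant $M$ for all sufficiently large $n$.

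Finally, the established independence gives the factorizations
\[
n^{D/2}m\bigl(\mathsf{U}(F,n)=x\bigr)=\prod_{i=1}^D a_n^{(i)}(x_i),\qquad \frac{1}{(2\pi\sigma^2)^{D/2}}e^{-\|x\|^2/(2n\sigma^2)}=\prod_{i=1}^D b_n(x_i),
\]
so that the standard telescoping identity $\prod_i \alpha_i-\prod_i \beta_i=\sum_j(\alpha_j-\beta_j)\prod_{i<j}\alpha_i\prod_{i>j}\beta_i$ combined with the uniform bound $M$ produces the estimate
\[
\sup_{x\in\Z^D}\left|n^{D/2}m\bigl(\mathsf{U}(F,n)=x\bigr)-\frac{1}{(2\pi\sigma^2)^{D/2}}e^{-\|x\|^2/(2n\sigma^2)}\right|\leq DM^{D-1}\varepsilon_n=o(1),
\]
which is precisely the statement of the theorem. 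The only delicate point in the whole argument is the independence of the coordinate projections in Step 1; everything else is product bookkeeping on top of \cref{thm:LCLT_for_U}. I do not anticipate any serious obstacle, since \cref{prop:functions_defn} was strengthened relative to the one-dimensional construction of \cite{MR4374685} precisely so that this coordinatewise independence would be automatic.
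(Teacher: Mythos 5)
Your proposal is correct and follows essentially the same route as the paper: apply the one-dimensional result \cref{thm:LCLT_for_U} coordinatewise, use properties \ref{prop_sec_b:functions_defn} and \ref{prop_sec_c:functions_defn} of \cref{prop:functions_defn} to get independence of $\mathsf{U}(f^{(1)},n),\dots,\mathsf{U}(f^{(D)},n)$, and conclude with the telescoping product bound (the paper's \cref{clm:triangle}). No substantive difference.
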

One step of the proof will make use of the following simple claim.
\begin{claim}\label{clm:triangle}
For every $z_1,\ldots,z_m,y_1,\ldots, y_m$ real numbers such that $\max_{1\leq m}|z_i|,\max_{1\leq m}|y_i|\leq C$,
\[
\left|\prod_{i=1}^m z_i-\prod_{i=1}^m y_i\right|\leq \sum_{i=1}^m C^{m-1}\left|z_i-y_i\right|.
\]
\end{claim}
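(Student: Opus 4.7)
The plan is to reduce the difference of products to a telescoping sum in which one factor at a time is replaced. Concretely, I would write
\[
\prod_{i=1}^m z_i - \prod_{i=1}^m y_i = \sum_{k=1}^m \left(\prod_{i=1}^{k-1} y_i\right)(z_k - y_k)\left(\prod_{i=k+1}^m z_i\right),
\]
with the convention that empty products equal $1$. This identity is verified by setting $P_k := \left(\prod_{i \leq k} y_i\right)\left(\prod_{i > k} z_i\right)$ for $0 \leq k \leq m$, observing that $P_0 = \prod_i z_i$ and $P_m = \prod_i y_i$, and checking that $P_{k-1} - P_k$ equals precisely the $k$-th summand above. Hence the telescoping sum $\sum_{k=1}^{m}(P_{k-1}-P_k)$ collapses to $\prod_i z_i - \prod_i y_i$.

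Given this identity, the triangle inequality together with the uniform bounds $|y_i|, |z_i| \leq C$ yields, for each $k$,
\[
\left|\left(\prod_{i=1}^{k-1} y_i\right)(z_k - y_k)\left(\prod_{i=k+1}^m z_i\right)\right| \leq C^{k-1} \cdot |z_k - y_k| \cdot C^{m-k} = C^{m-1}|z_k - y_k|,
\]
and summing over $k \in \{1, \ldots, m\}$ produces the desired inequality. There is no genuine obstacle here; the only point that needs a little care is to put all the $y_i$'s on one side of the difference and all the $z_i$'s on the other, so that when the two bounds are multiplied the exponents add to exactly $m-1$ independently of $k$.
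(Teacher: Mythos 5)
Your proof is correct. The paper states \cref{clm:triangle} without proof (it is labelled a ``simple claim''), and the telescoping decomposition you use -- interpolating between $\prod_i z_i$ and $\prod_i y_i$ through the hybrid products $P_k=\bigl(\prod_{i\leq k}y_i\bigr)\bigl(\prod_{i>k}z_i\bigr)$, bounding each increment by $C^{k-1}\lvert z_k-y_k\rvert\,C^{m-k}=C^{m-1}\lvert z_k-y_k\rvert$ -- is exactly the standard argument the authors implicitly rely on. Nothing is missing.
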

\begin{proof}[Proof of \cref{thm:LCLT_for_U_F_n}]
Fix $1\leq i\leq D$. By construction, the functions $\bar{f}_k^{(i)}:X\to \R$, satisfy conditions \ref{item_a:bar_f},\ref{item_b:bar_f} and \ref{item_c:bar_f} in Subsection \ref{subsec:llt dim1}. By \cref{thm:LCLT_for_U},
\[
\sup_{x\in\Z}\left|\sqrt{n}\mu\left(\mathsf{U}\left(f^{(i)},n\right)=x\right)-\frac{e^{-\frac{x^2}{2n\sigma^2}}}{\sqrt{2\pi\sigma^2}}\right|=o(1).
\]
We conclude that there exists $0<r_n\to 0$ so that
\begin{equation}\label{eq:LCLT_for_all_functions}
\max_{1\leq i\leq D}\sup_{x\in\Z}\left|\sqrt{n}\mu\left(\mathsf{U}\left(f^{(i)},n\right)=x\right)-\frac{e^{-\frac{x^2}{2n\sigma^2}}}{\sqrt{2\pi\sigma^2}}\right|\leq r_n
\end{equation}
and there exists $C>0$ such that for all large $n$, 
\begin{equation}\label{eq:bound_from_LCLT}
\text{for all}\  x\in \Z,\ \max_{1\leq i\leq D}\left(\sqrt{n}\mu\left(\mathsf{U}\left(f^{(i)},n\right)=x\right)\right)\leq C.
\end{equation}
Now $\mathsf{U}\left(f^{(i)},n\right)$ is a function of 
\[
\left\{U^j\bar{f}_k^{(i)}:\ p_k<n<d_k,\ 0\leq j\leq 2d_k+p_k\right\}. 
\]
We may further assume that $C>\frac{1}{\sqrt{2\pi\sigma^2}}=\max_{x\in\Z}\frac{e^{-\frac{x^2}{2n}}}{\sqrt{2\pi\sigma^2}}$. By properties \ref{prop_sec_b:functions_defn} and \ref{prop_sec_c:functions_defn} in \cref{prop:functions_defn}, $\mathsf{U}\left(f^{(1)},n\right),\dots,\mathsf{U}\left(f^{(D)},n\right)$ are independent. By this and \cref{clm:triangle},  for all $x\in \Z^D$,
\begin{align*}
\left|n^{D/2}m\left(\mathsf{U}(F,n)=x\right)-\frac{1}{(2\pi \sigma^2)^{D/2}}e^{-\frac{\|x\|^2}{2n\sigma^2}}\right|&=\left|\prod_{i=1}^D\sqrt{n}m\left(\mathsf{U}\left(f^{(i)},n\right)=x_i\right)-\frac{e^{-\sum_{i=1}^D\frac{x_i^2}{2n\sigma^2}}}{(2\pi \sigma^2)^{D/2}}\right|\\
&\leq \sum_{i=1}^DC^{D-1}\left|\sqrt{n}m\left(\mathsf{U}\left(f^{(i)},n\right)=x_i\right)-\frac{e^{-\frac{x_i^2}{2n\sigma^2}}}{\sqrt{2\pi\sigma^2}}\right|
\end{align*}
where the last inequality is a routine application of the triangle inequality and \eqref{eq:bound_from_LCLT}. Taking into account \eqref{eq:LCLT_for_all_functions} we have shown that for all $x\in \Z^D$, 
\[
\left|n^{D/2}m\left(\mathsf{U}(F,n)=x\right)-\frac{1}{(2\pi \sigma^2)^{D/2}}e^{-\frac{\|x\|^2}{2n\sigma^2}}\right|\leq DC^{D-1}r_n=o(1). 
\]
This concludes the proof. 
\end{proof}
\begin{proof}[Proof of \cref{thm:LCLT_1}]
Write 
\begin{align*}
S_n(F)&=\mathsf{U}(F,n)+\left(S_n(F)-\mathsf{U}(F,n)\right)\\
&=\mathsf{U}(F,n)+\left(S_n(F)-\mathscr{W}_F(n)\right)+\left(\mathscr{W}_F(n)-\mathsf{U}(F,n)\right)
\end{align*}

By \cref{prop:third_bound_in_KV_multi_D},  $\mathsf{U}(F,n)$ and $S_n(F)-\mathsf{U}(F,n)$ are independent. In addition by \cref{prop:Similar_to_second_bound_in_KV} and \cref{prop:third_bound_in_KV_multi_D},
\[
\left\|S_n(F)-\mathsf{U}(F,n)\right\|_2^2=O\left(\frac{n}{\sqrt{\log(n)}}\right). 
\]
The claim now follows from \cref{thm:LCLT_for_U_F_n} and \cref{prop:bla}.
\end{proof}

\section{Divergence of non-conventional ergodic averages}
In this section, we prove \cref{thm:Main}. Our proof uses an idea of Huang, Shao, and Ye from \cite{Huang_Shao_Ye_2024}, which is to consider $T$ a skew product extension of an irrational rotation by a function satisfying the local central limit theorem. After this, they define $S$ as a carefully chosen transformation that is isomorphic to $T$. 

There are two notable differences in our construction. First, our skew product extension is an extension by a $\Z^2$ full-shift, and the function satisfies the $2$-dimensional local central limit theorem. The second is that, if we would then proceed similarly to \cite{Huang_Shao_Ye_2024}, we would obtain the result for polynomials of degree $3$ or higher. In order to also include polynomials of degree $2$, we define $S$ by using a detailed study of the range process of the cocycle at polynomial times (see Subsection \ref{sec:construction_S}). 

\subsection{Construction of the ergodic system $(X,\mathcal{X},\mu, T)$}\label{sec:construction_T}

We denote by $\T = \R /\Z$ the  unit circle. For $\alpha \in \R/\mathbb{Q}$, let $R_{\alpha}: \T \rightarrow \T$ be the irrational rotation defined by $R_{\alpha} (y) = (y+\alpha)\,\mathrm{mod}\,1$. Let $\lambda$ be the Lebesgue measure on $\T$. Thus $(\T,\B(\T), \lambda, R_{\alpha})$ is an ergodic measure preserving system, where $\B(\T)$ is the Borel sigma algebra generated by open sets in $\T$. 

By \Cref{thm:LCLT} for $d=2$, there exists a Borel function, $f: \T \rightarrow \Z^2$, given by $f (y) = (f^{(1)}(y), f^{(2)}(y))$ for $y \in \T$, such that the corresponding $2$-dimensional ergodic sums process $S_n(f) : \T \rightarrow \Z^2$, given by
\[
S_n(f) (y):=\sum_{k=0}^{n-1}f\circ R_{\alpha}^k (y),
\]
satisfies the lattice local central limit theorem. In this section, we fix such a Borel function $f$.

Let $\Sigma = \{0,1\}^{\Z^2}$ be the space of $2$-dimensional arrays of $\{0,1\}$. For $v \in \Z^2$, we denote by $\sigma_v : \Sigma \rightarrow \Sigma$ the $2$-dimensional shift, given by for every $\omega\in \Sigma$
\[
(\sigma_v \, \omega) (u) = \omega (u+v).
\] 
We endow $\Sigma$ with the stationary (infinite) product measure $\nu=\left(\frac{1}{2}\delta_0+\frac{1}{2}\delta_1\right)^{\Z^2}$ with marginals $\left(\frac{1}{2},\frac{1}{2}\right)$, and consider the $\Z^2$ Bernoulli shift $(\Sigma, \B(\Sigma), \nu, \sigma)$. Here $\B(\Sigma)$ is the Borel sigma algebra generated by cylinder sets in $\Sigma$. We define $(X,\mathcal{X},\mu)$ to be the Cartesian product space $\T \times \Sigma$, endowed with product measure, in other words
\[(X,\mathcal{X},\mu) : = (\T \times \Sigma, \B(\T) \otimes \B(\Sigma), \lambda \times \nu).\]

Let  $T : \T \times \Sigma \rightarrow \T \times \Sigma$ be the skew product of $R_\alpha$ and $f$ defined by 
\begin{equation*}
    T (y, \omega) = (R_{\alpha} (y), \sigma_{f(y)} (\omega)).
\end{equation*}
The skew product $T$ is a measure preserving transformation of $(X,\mathcal{X},\mu)$ and for all $n\in\N$,
\begin{equation}\label{eq:T^n}
    T^n (y, \omega) = (R^n_{\alpha} (y), \sigma_{S_n(f)(y)} (\omega)).
\end{equation}
For $v = (v^1,v^2) \in \N^2$, we define the rectangle centered at the origin with side lengths $(2v^1+1)$ and $(2v^2+1)$ by,
\begin{equation}\label{eq:rectangle}
    U_{v} : =  \{-v^1,\ldots,0,\ldots,v^1\} \times \{-v^2,\ldots,0,\ldots,v^2\}. 
\end{equation}
For $a \in \{0,1\}^{U_v}$, we denote by $[a]_{v}$ the cylinder set defined by $a$ and $U_v$
\begin{equation}\label{eq:cylinder_set}
[a]_{v} : = \{\omega \in \Sigma: \omega(i,j) = a(i,j),\,\, \forall (i,j) \in U_v\}.    
\end{equation} 

\begin{prop}\label{prop:T_ergodic} $(X,\mathcal{X},\mu, T)$ is an ergodic measure preserving system.
\end{prop}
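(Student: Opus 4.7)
The plan is to use Fourier analysis on the Bernoulli fibre. For each finite $A\subset\Z^2$, define the Rademacher character $\chi_A(\omega):=\prod_{u\in A}(-1)^{\omega(u)}$. As $A$ ranges over the family $\mathcal F$ of finite subsets of $\Z^2$, $\{\chi_A\}_{A\in\mathcal F}$ is an orthonormal basis of $L^2(\Sigma,\nu)$, so any $F\in L^2(X,\mu)$ admits the Fourier--Walsh expansion
\[
F(y,\omega)=\sum_{A\in\mathcal F} g_A(y)\,\chi_A(\omega),\qquad \sum_{A\in\mathcal F}\|g_A\|_{L^2(\T)}^2=\|F\|_{L^2(\mu)}^2.
\]
The identity $\chi_A\circ\sigma_v=\chi_{A+v}$ together with $F\circ T=F$ gives, after matching coefficients, the family of scalar equations
\[
g_A(y)=g_{A-f(y)}(R_\alpha y),\qquad A\in\mathcal F,\;\text{for a.e. }y.
\]
For $A=\emptyset$ this reads $g_\emptyset\circ R_\alpha=g_\emptyset$, so $g_\emptyset$ is constant by ergodicity of $R_\alpha$.

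For $A\neq\emptyset$ I partition the non-empty finite subsets of $\Z^2$ into their $\Z^2$-translation orbits (``shapes''). Fix an orbit and a representative $A_0$; identify the orbit with $\Z^2$ via $v\leftrightarrow A_0+v$ and set $G_{A_0}(y,v):=g_{A_0+v}(y)$. The displayed equation becomes
\[
G_{A_0}(y,v)=G_{A_0}(R_\alpha y,\,v-f(y)),
\]
which says $G_{A_0}$ is invariant under the $\Z^2$-cocycle skew product $\widehat T(y,v):=(R_\alpha y,\,v-f(y))$, acting on the infinite-measure space $(\T\times\Z^2,\lambda\times\#)$. Summing $\|G_{A_0}\|^2_{L^2(\lambda\times\#)}$ over orbits reproduces $\sum_A\|g_A\|_{L^2(\T)}^2\le\|F\|_{L^2(\mu)}^2<\infty$, so each $G_{A_0}$ lies in $L^2(\lambda\times\#)$.

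The key step is to conclude that $G_{A_0}\equiv 0$ for every orbit. This will follow from ergodicity of $\widehat T$, because an ergodic measure-preserving transformation of an infinite-measure space admits no non-zero $L^2$ invariant (its super-level sets are invariant of finite measure and therefore null). Ergodicity of $\widehat T$ is a standard consequence of the $2$-dimensional lattice local central limit theorem (\cref{thm:LCLT}): the asymptotic $\lambda(S_n(f)=v)\sim(2\pi n\sigma^2)^{-1}$ for every $v\in\Z^2$ implies, via Schmidt's essential-values theory for $\Z^d$-valued cocycles, that the essential-value group of $f$ equals all of $\Z^2$, and this is equivalent to ergodicity of $\widehat T$. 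Putting the pieces together gives $g_A\equiv 0$ for every non-empty $A$, whence $F=g_\emptyset$ is a.e.\ constant and $T$ is ergodic. The main obstacle is therefore the ergodicity input for $\widehat T$; the rest of the argument is Fourier bookkeeping on the Bernoulli fibre together with the orbit decomposition of $\mathcal F$.
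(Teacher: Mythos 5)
Your Fourier--Walsh reduction is a genuinely different route from the paper's. The paper never passes to the $\Z^2$-skew product $\widehat T(y,v)=(R_\alpha y,\,v-f(y))$; it estimates $\mu\big(B_1\times[a_1]_v\cap T^{-n}(B_2\times[a_2]_u)\big)$ directly, using \cref{thm:LCLT} only through the uniform bound $\sup_{w\in\Z^2}\lambda(S_n(f)=w)=O(1/n)$ (to show that the set $M_n$ of points whose cocycle stays in a fixed window has measure $o(1)$), and then concludes via a Ces\`aro average and ergodicity of $R_\alpha$. Your bookkeeping is correct: the expansion $F=\sum_A g_A\chi_A$, the coefficient equation $g_A(y)=g_{A-f(y)}(R_\alpha y)$, the constancy of $g_\emptyset$, the orbit decomposition (the translation action of $\Z^2$ on nonempty finite sets is free, so each orbit is a copy of $\Z^2$), and the fact that a nonzero invariant function in $L^2$ of an infinite measure would produce an invariant set of positive \emph{finite} measure.

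The gap is the input you feed this machine. You claim that \cref{thm:LCLT} implies, via Schmidt's essential-value theory, that $E(f)=\Z^2$ and hence that $\widehat T$ is ergodic. That implication is not justified: $v$ being an essential value requires $\lambda\big(B\cap R_\alpha^{-n}B\cap[S_n(f)=v]\big)>0$ for \emph{every} positive-measure $B$, i.e.\ control of $S_n(f)$ conditioned on $B\cap R_\alpha^{-n}B$, whereas the theorem only controls the unconditional distribution; a cocycle cohomologous, via an unbounded integer-valued transfer function, to one taking values in a proper subgroup can still satisfy a full lattice LCLT, so the bare LCLT cannot determine $E(f)$. Fortunately you need much less than ergodicity of $\widehat T$: only that it admits no invariant set of positive finite measure. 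This does follow from \cref{thm:LCLT}. Writing $E_v=\{y:(y,v)\in E\}$ for a finite-measure set $E\subset\T\times\Z^2$, one has $(\lambda\times\#)\big(E\cap\widehat T^{-n}E\big)=\sum_{v,w}\lambda\big(E_v\cap R_\alpha^{-n}E_w\cap[S_n(f)=v-w]\big)$; truncating to a finite set $V$ of fibres carrying all but $\epsilon$ of the mass, the off-$V$ terms contribute at most $2\epsilon$ (the events $[S_n(f)=v-w]$ are disjoint in the free index), and the remaining terms are at most $|V|^2\sup_u\lambda(S_n(f)=u)\le |V|^2C/n$. Hence $(\lambda\times\#)\big(E\cap\widehat T^{-n}E\big)\to0$, which is incompatible with $E$ being invariant of positive measure. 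With this replacement your argument closes, and it ends up using the local limit theorem in exactly the same way the paper does.
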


\begin{proof} Fix $B_1, B_2 \in \B(\T)$ such that $\lambda(B_1)>0$ and $\lambda(B_2)>0$. For $v=(v^1,v^2) \in \N^2$ and $u = (u^1,u^2) \in \N^2$, fix cylinder sets $[a_1]_{v}$ and $[a_2]_{u}$. Denote by $m(u,v) \in \N^2$ the vector \[
m(u,v) : = 2\cdot (\mathrm{max}\{u^1+1,v^1+1\}, \mathrm{max}\{u^2+1,v^2+1\}).
\] For $n \in \N$, we define
\[
M_n = \{y \in \T: \|S_n(f) (y)\|_{\infty} \leq \| m(u,v)\|_{\infty}\}. 
\] 
Thus,
\begin{align*}
\mu(B_1 \times [a_1]_v \cap T^{-n}(B_2 \times [a_2]_u)) 
= \int_{\Sigma}\int_{\T} 1_{B_1 \cap R^{-n}_{\alpha}B_2} (y) \cdot 1_{\sigma_{-S_n(f) (y) [a_2]_u \cap [a_1]_v} }(\omega) d\lambda(y)\,d\nu(\omega).
\end{align*} 
For all $p \in \Z^2$ such that $\|p\|_{\infty} > \|m(u,v)\|_{\infty}$,  
\[
\nu(\sigma_{-p}[a_2]_u \cap [a_1]_v) = \nu([a_2]_u)\cdot \nu([a_1]_v).
\]
Thus, 
\begin{align*}
\mu(B_1 \times [a_1]_v \cap T^{-n}(B_2 \times [a_2]_u)) &\geq \int_{\Sigma}\int_{\T\setminus M_n} 1_{B_1 \cap R^{-n}_{\alpha}B_2} (y) \cdot 1_{\sigma_{-S_n(f) (y) [a_2]_u \cap [a_1]_v} }(\omega) d\lambda(y)\,d\nu(\omega)\\
&= \int_{\T\setminus M_n} 1_{B_1 \cap R^{-n}_{\alpha}B_2} (y) \cdot \nu([a_2]_u) \nu([a_1]_v) d\lambda(y)\\
&\geq (\lambda (B_1 \cap R^{-n}_{\alpha}B_2) - \lambda(M_n)) \cdot \nu([a_2]_u) \nu([a_1]_v).
\end{align*}
Thus we get,
\begin{equation}\label{eq:mu_arbitary}
\mu(B_1 \times [a_1]_v \cap T^{-n}(B_2 \times [a_2]_u)) \geq (\lambda (B_1 \cap R^{-n}_{\alpha}B_2) - \lambda(M_n)) \cdot \nu([a_2]_u) \nu([a_1]_v).
\end{equation} By \Cref{thm:LCLT}, for $d=2$ and $(i,j) \in \Z^2$, there exits $N(i,j) \in \N$ such that for all $n> N(i,j)$, we have 
\[
\left|n \cdot \lambda\left(S_n(f)=(i,j)\right)-\frac{e^{-\frac{\|(i,j)\|^2}{2n\sigma^2}}}{(2\pi \sigma^2)}\right| < \frac{1}{(2\pi \sigma^2)}.
\] Hence for all $n> N(i,j)$,
\[
\lambda\left(S_n(f)= (i,j)\right) < \dfrac{e^{-\frac{\|(i,j)\|^2}{2n\sigma^2}}}{n(2\pi\sigma^2)} + \dfrac{1}{n(2\pi \sigma^2)} \leq \dfrac{2}{n(2\pi \sigma^2)}. 
\] Let $K = \underset{\{ (i,j) : \|(i,j)\|_{\infty} \leq \|m(u,v)\|_{\infty}\}}{\mathrm{max}} N(i,j)$. Then for any $n> K$, we have
\[
\lambda(M_n) =  \underset{\{ (i,j) \in \Z^2: \|(i,j)\|_{\infty} \leq \|m(u,v)\|_{\infty}\}}{\sum} \lambda\left(S_n(f) =(i,j)\right)  \leq \dfrac{(2 \cdot (\|m(u,v)\|_{\infty}+1))^2}{n(2\pi \sigma^2)}.
\] 
Thus $\underset{n\rightarrow \infty}{\mathrm{lim}} \lambda(M_n) = 0$ and by ergodicity of $R_\alpha$,
\[
\underset{N\rightarrow \infty}{\mathrm{lim}} \dfrac{1}{N} \sum^{N-1}_{n=0} (\lambda (B_1 \cap R^{-n}_{\alpha}B_2) - \lambda(M_n)) =\underset{N\rightarrow \infty}{\mathrm{lim}} \dfrac{1}{N} \sum^{N-1}_{n=0} \lambda (B_1 \cap R^{-n}_{\alpha}B_2)=\lambda\left(B_1\right)\lambda\left(B_2\right).  
\]
We deduce from this and \eqref{eq:mu_arbitary} that, 
\begin{align*}
\underset{N\rightarrow \infty}{\mathrm{lim}} \dfrac{1}{N} \sum^{N-1}_{n=0} \mu(B_1 \times [a_1]_v \cap T^{-n}(B_2 \times [a_2]_u))
&\geq \lambda(B_1)\lambda(B_2)\cdot \nu([a_2]_u) \nu([a_1]_v)\\
&\geq \mu (B_1 \times [a_1]_v) \cdot \mu (B_2 \times [a_2]_v).
\end{align*}
Since $B_1,B_2 \in \B(\T)$ were arbitrary positive $\lambda$-measures sets, and $[a_1]_v, [a_2]_u \in \Sigma$ were arbitrary cylinder sets and $T$ is measure preserving, it follows that for any positive $\mu$-measure $A,B \in \mathcal{X}$, we have
\[
\underset{N\rightarrow \infty}{\mathrm{lim}} \dfrac{1}{N} \sum^{N-1}_{n=0} \mu (A \cap T^{-n} B) \geq \mu(A)\cdot \mu(B). 
\] In other words for any positive $\mu$-measure $A,B \in \mathcal{X}$, there exits $n \in \N$ such that $\mu (A \cap T^{-n} B) \geq 0$. Hence $(X,\mathcal{X},\mu, T)$ is ergodic. \end{proof}
For $y \in \T$ and $N \in \N$, consider the set \[
A_N(y) = \{ S_n(f)(y)  :0\leq n \leq N-1 \} \subset \Z^2.
\] Note that for $y\in \T$, the cardinality of $A_N(y)$ can be at most $N$. 

\begin{lemma}\label{lem:A_n_y_limit} For a.e. $y\in \T$, 
\[
\underset{N\rightarrow \infty}{\mathrm{lim}} \dfrac{|A_N(y)|}{N} = 0.
\]
\end{lemma}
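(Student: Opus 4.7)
The plan is to combine a subadditivity argument with an expected-range bound coming from \Cref{thm:LCLT}.

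First I would establish subadditivity: the cocycle identity $S_{N+n}(f)(y) = S_N(f)(y) + S_n(f)(R_\alpha^N y)$ makes $\{S_n(f)(y): N \leq n < N+M\}$ a translate of $A_M(R_\alpha^N y)$, so
\[
|A_{N+M}(y)| \leq |A_N(y)| + |A_M(R_\alpha^N y)|.
\]
Since $(\T, \B(\T), \lambda, R_\alpha)$ is ergodic, Kingman's subadditive ergodic theorem yields the $\lambda$-a.e.\ existence of the constant limit
\[
L := \lim_{N\to\infty} \frac{|A_N(y)|}{N} = \inf_N \frac{1}{N}\int_{\T}|A_N|\,d\lambda \in [0,1].
\]
As $|A_N|/N \leq 1$, bounded convergence gives $L = \lim_N \frac{1}{N}\int|A_N|\,d\lambda$, reducing the claim to showing $\int|A_N|\,d\lambda = o(N)$.

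To bound the expected range, introduce the local time $L_N(x)(y) := \#\{0 \leq n < N : S_n(f)(y) = x\}$, so that $\sum_{x\in\Z^2} L_N(x) \equiv N$ and $|A_N| = \#\{x \in \Z^2 : L_N(x) \geq 1\}$. Applying \Cref{thm:LCLT} at the origin,
\[
\int L_N(0)\,d\lambda = \sum_{n=0}^{N-1} \lambda(S_n(f) = 0) \sim \frac{\log N}{2\pi\sigma^2},
\]
reflecting the many returns typical of recurrent $2$-dimensional processes. The strategy is to upgrade this to show that each visited site is visited $\gtrsim \log N$ times on average, i.e.
\[
\lambda(L_N(x) \geq 1) \lesssim \frac{\int L_N(x)\,d\lambda}{\log N} \qquad \text{uniformly in } x \in \Z^2,
\]
whereupon summing over $x$ yields $\int |A_N|\,d\lambda \lesssim N/\log N = o(N)$, as needed.

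The hard part will be establishing this visit-count bound without recourse to a strong Markov property. The cocycle identity $L_N(x)(y) = L_{N-\tau_x(y)}(0)(R_\alpha^{\tau_x(y)} y)$, valid on $\{\tau_x \leq N-1\}$ with $\tau_x$ the first hitting time of $x$, shows that visits to $x$ after the first hit are returns to $0$ for a shifted cocycle; measure-preservation of $R_\alpha$ suggests that these should sum to $\sim \log N$ on average. To turn this heuristic into a rigorous bound, I would establish the variance estimate $\mathrm{Var}(L_N(0)) = O(\log N)$ by expanding
\[
\lambda(S_n(f)=0,\, S_m(f)=0) = \int_{\T}\mathbf{1}[S_n(f)(y)=0]\cdot \mathbf{1}[S_{m-n}(f)(R_\alpha^n y)=0]\,d\lambda(y)
\]
and invoking the uniform LCLT bound $\lambda(S_k(f)=x) = O(1/k)$. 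Chebyshev's inequality, applied along the $R_\alpha$-orbit of the base point, would then concentrate $L_N(0)$ around its mean for $\lambda$-most shifted base points, yielding the visit-count bound and completing the argument.
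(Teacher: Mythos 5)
Your reduction is sound as far as it goes: the subadditivity $|A_{N+M}(y)|\le |A_N(y)|+|A_M(R_\alpha^N y)|$ is correct, Kingman's theorem applies, and the problem does reduce to showing $\int_\T |A_N|\,d\lambda=o(N)$. The genuine gap is in your third step, where you aim for the much stronger bound $\int_\T|A_N|\,d\lambda\lesssim N/\log N$. Two things fail there. First, the variance estimate $\mathrm{Var}(L_N(0))=O(\log N)$ does not follow from the uniform bound $\lambda(S_k(f)=x)=O(1/k)$: writing $\lambda(S_n(f)=0,\,S_m(f)=0)=\lambda\bigl(S_n(f)=0,\ S_{m-n}(f)\circ R_\alpha^n=0\bigr)$, these two events are \emph{not} independent, so the only bound available from \cref{thm:LCLT} is $\min\{O(1/n),O(1/(m-n))\}$; summed over $n<m\le N$ this already gives $\mathbb{E}[L_N(0)^2]=O(N)$ (the pairs with $m-n\le n$ alone contribute order $N$), nowhere near $(\log N)^2$, so Chebyshev yields nothing. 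Getting the variance you want would require a joint local limit theorem for the pair $(S_n(f),S_m(f)-S_n(f))$, which is a genuinely new estimate not supplied by the paper's one-time LCLT. Second, even granting concentration of $L_N(0)$, the inequality $\lambda(L_N(x)\ge 1)\lesssim \mathbb{E}[L_N(x)]/\log N$ needs the law of $L_{N-k}(0)\circ R_\alpha^k$ \emph{conditioned on} $\{\tau_x=k\}$ to resemble its unconditional law; measure preservation of $R_\alpha$ controls only the latter, and decoupling the two is exactly the strong Markov property you correctly note is unavailable. The heuristic ($\sim N/\log N$ visited sites, as for planar random walk) is right, but the proof is not there.

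The fix is that you do not need $N/\log N$: any $o(N)$ bound on the mean suffices, and one comes from the last-visit (Kesten--Spitzer--Whitman) decomposition, which is essentially the paper's route. Counting each element of $A_N(y)$ at the \emph{last} time $n<N$ it is visited gives $|A_N(y)|=\sum_{n=0}^{N-1}\mathbf{1}_{D_{N-n}}(R_\alpha^n y)$, where $D_m:=\{y:\ S_j(f)(y)\ne(0,0)\ \text{for all}\ 1\le j<m\}$; hence $\int_\T|A_N|\,d\lambda=\sum_{m=1}^N\lambda(D_m)$, and by Ces\`aro it is enough that $\lambda(D_\infty)=0$, i.e.\ that the cocycle is recurrent. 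Recurrence is what the paper extracts from the distributional convergence of $S_n(f)/\sqrt n$ via \cite{MR1663750,MR1721618}, after which it invokes \cite[Proposition 2.1]{MR4290512}, which passes directly from recurrence to the almost-everywhere density-zero statement (so even your Kingman step is subsumed, though it is a perfectly good elementary substitute for that citation). In short: keep your reduction to the mean, but derive the mean bound from recurrence rather than from the local-time computation you sketch.
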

The proof of the lemma uses the notion of recurrent cocycles. Given a probability preserving system $(X,\B,\mathbb{P},R)$ and a function $h:X\to \R^d$, its corresponding cocycle is \textbf{recurrent} if for every $B\in\B$, and $\epsilon>0$, there exists $n\in\N$, such that
\[
\mathbb{P}\left(B\cap R^{-n}B\cap \{|S_n(h)|<\epsilon\}\right)>0.
\]
\begin{proof}
Since $S_n(f)$ satisfies the $2$-dimensional local central limit theorem, $S_n(f)/\sqrt{n}$ converges weakly to a $2$-dimensional normal distribution. By \cite{MR1663750,MR1721618}\footnote{Note that $||f||\in L^2(\lambda)$ so our cocycle also satisfies the extra condition in \cite{MR1721618}} the cocycle $S_n(f)$ is recurrent. Now as $S_n(f)$ is $\Z^2$ valued, for every $B\in \B(\T)$, there exists $n\in \N$, such that 
\[
\lambda\left(B\cap R_\alpha^{-n}B\cap \{S_n(f)=(0,0)\}\right)>0.
\]
This implies that 
\[
\lambda\left(x\in\mathbb{T}:\ \forall n\in\N, S_n(f)\neq (0,0)\right)=0. 
\]
The claim now follows from \cite[Proposition 2.1]{MR4290512}.
\end{proof} Now we show that the measure theoretic entropy of $(X,\mathcal{X},\mu, T)$ is zero. 

\begin{prop}\label{prop:entropy_T=0} $h_{\mu}(X,T) = 0$. 
    
\end{prop}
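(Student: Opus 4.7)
The plan is to reduce the entropy computation to the fiber direction via Abramov--Rokhlin and then exploit the sublinear growth of the range $|A_N(y)|$ provided by \cref{lem:A_n_y_limit}.

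The projection $\pi:X\to\T$, $\pi(y,\omega)=y$, is a factor map from $(X,\mathcal{X},\mu,T)$ onto $(\T,\B(\T),\lambda,R_\alpha)$, and since $R_\alpha$ is an irrational rotation we have $h_\lambda(R_\alpha)=0$. By the Abramov--Rokhlin formula,
\[
h_\mu(T)=h_\lambda(R_\alpha)+h_\mu(T\mid \pi^{-1}\B(\T))=h_\mu(T\mid \pi^{-1}\B(\T)),
\]
so it suffices to show the relative (fiber) entropy vanishes: $h_\mu(T,\xi\mid \pi^{-1}\B(\T))=0$ for every finite measurable partition $\xi$ of $X$.

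By the usual approximation argument (cylinder sets together with Borel subsets of $\T$ generate $\mathcal{X}$), together with continuity of entropy in the partition, it is enough to verify vanishing of fiber entropy for partitions $\xi$ whose atoms are of the form $B_i\times[a_j]_v$ with $B_i\in\B(\T)$ and $[a_j]_v$ a cylinder determined by some fixed window $U_v$. For such $\xi$, using \eqref{eq:T^n} we have
\[
T^{-n}(B\times[a]_v)=\{(y,\omega):R_\alpha^n y\in B,\ \omega(S_n(f)(y)+u)=a(u)\ \forall u\in U_v\},
\]
so, restricted to the fiber $\{y\}\times\Sigma$, the refinement $\bigvee_{n=0}^{N-1}T^{-n}\xi$ is coarser than the partition of $\Sigma$ determined by the values of $\omega$ on the set
\[
W_N(y):=\bigcup_{n=0}^{N-1}\bigl(S_n(f)(y)+U_v\bigr),\qquad |W_N(y)|\le |U_v|\cdot|A_N(y)|.
\]
Since $\nu$ is the Bernoulli$(1/2,1/2)$ product measure on $\{0,1\}^{\Z^2}$ and the conditional measure on $\{y\}\times\Sigma$ given $\pi^{-1}\B(\T)$ is $\nu$,
\[
H_{\nu}\Bigl(\bigvee_{n=0}^{N-1}T^{-n}\xi\big|_{\{y\}\times\Sigma}\Bigr)\le |W_N(y)|\log 2\le |U_v|\log 2\cdot |A_N(y)|.
\]

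Integrating in $y$ and dividing by $N$ yields
\[
\frac1N\,H_\mu\Bigl(\bigvee_{n=0}^{N-1}T^{-n}\xi\,\Big|\,\pi^{-1}\B(\T)\Bigr)\le |U_v|\log 2\cdot \int_\T \frac{|A_N(y)|}{N}\,d\lambda(y).
\]
By \cref{lem:A_n_y_limit}, $|A_N(y)|/N\to 0$ for $\lambda$-a.e.\ $y$, and since $|A_N(y)|/N\le 1$ the dominated convergence theorem gives that the right-hand side tends to $0$. Thus $h_\mu(T,\xi\mid \pi^{-1}\B(\T))=0$ for every such $\xi$, so $h_\mu(T\mid \pi^{-1}\B(\T))=0$ and hence $h_\mu(X,T)=0$. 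The main subtlety is packaging everything around the recurrence statement \cref{lem:A_n_y_limit}: once one sees that $|W_N(y)|$ is controlled by the sublinearly-growing range $|A_N(y)|$, the Bernoulli fiber has ``too little new randomness'' to produce entropy, and the rest is bookkeeping with Abramov--Rokhlin.
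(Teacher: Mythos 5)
Your proof is correct and follows essentially the same route as the paper: reduce via Abramov--Rokhlin to the fiber entropy over $R_\alpha$, bound the conditional entropy of the $N$-fold join on each fiber by a quantity proportional to $|A_N(y)|$ (you via the window count $|W_N(y)|\le|U_v|\,|A_N(y)|$ and the Bernoulli structure, the paper via the atom count $|\xi|^{|A_N(y)|}$), and conclude with \cref{lem:A_n_y_limit} and dominated convergence. No substantive differences.
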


\begin{proof} For any finite partition $\xi$ of $\Sigma = \{0,1\}^{\Z^2}$, \[
h_{\mu} (T\mid R_{\alpha}, \xi) = \underset{N\rightarrow \infty}{\mathrm{lim}} \dfrac{1}{N} \int_{\T} H_{\nu} (\bigvee^{N-1}_{n=0} \sigma_{-S_n(f)(y)} \xi ) d\lambda(y),
\]
where we set $S_0(f) := (0,0)$. Note that for $y \in \T$, the cardinality of $\bigvee^{N-1}_{n=0} \sigma_{-S_n(f)(y)} \xi$ is bounded above by $|\xi|^{|A_n(y)|}$. Thus,\[
h_{\mu} (T\mid R_{\alpha}, \xi) \leq \underset{N\rightarrow \infty}{\mathrm{lim}} \dfrac{1}{N} \int_{\T} \mathrm{log} |\xi|^{|A_n(y)|}  d\lambda(y) = \underset{N\rightarrow \infty}{\mathrm{lim}}  \int_{\T} \dfrac{|A_N(y)|}{N} \mathrm{log} |\xi| d\lambda(y),
\] 
\[
= \mathrm{log} |\xi| \underset{N\rightarrow \infty}{\mathrm{lim}}  \int_{\T} \dfrac{|A_N(y)|}{N}  d\lambda(y).
\] We now apply the Dominated Convergence Theorem and use \cref{lem:A_n_y_limit} to obtain,
\[
h_{\mu} (T\mid R_{\alpha}, \xi) \leq 
\mathrm{log} |\xi| \underset{N\rightarrow \infty}{\mathrm{lim}}  \int_{\T} \dfrac{|A_N(y)|}{N}  d\lambda(y) = \mathrm{log} |\xi| \int_{\T} \underset{N\rightarrow \infty}{\mathrm{lim}} \dfrac{|A_N(y)|}{N} d\lambda(y) = 0. 
\] Since \[h_{\mu} (T\mid R_{\alpha}) = \underset{\xi}{\mathrm{sup}}\, h_{\mu} (T\mid R_{\alpha}, \xi), \] where the supremum is taken over all finite measurable partitions of $\Sigma$, we get $h_{\mu} (T\mid R_{\alpha}) = 0$. By the Abramov-Rokhlin formula \cite{Abramov_Rohlin_1972},
\[
h_{\mu}(X,T) = h_{\lambda}(\T,R_{\alpha}) + h_{\mu} (T\mid R_{\alpha}) = 0.
\] \end{proof}
\subsection{Construction of the ergodic system $(X,\mathcal{X},\mu, S)$}\label{sec:construction_S} In this subsection, we discuss the construction of $(X,\mathcal{X},\mu, S)$ and show that it is isomorphic to the system $(X,\mathcal{X},\mu, T)$ constructed in Subsection \ref{sec:construction_T}. Let $f:\mathbb{T}\to\Z^2$ be the function as in Subsection \ref{sec:construction_T}. In other words the $2$-dimensional ergodic sums process $S_n(f) : \T \rightarrow \Z^2$, satisfies the lattice local central limit theorem (\Cref{thm:LCLT}). Let $p:\Z\to\Z$, be a polynomial, for $y \in \T$ and $N\in\N$, we set \begin{equation}\label{eq:R_N_p_y}
\mathbf{R}_N^{(p)}(y):=\left\{S_{p(k)}(f)(y):\ 1\leq k\leq N\right\} \subset \Z^2,
\end{equation} and 
\begin{equation}\label{eq:R_p_y}
\mathbf{R}^{(p)}(y):=\left\{S_{p(k)}(f)(y):\ k\in\N\right\} \subset \Z^2.  
\end{equation} In rest of the section, we will work with polynomials with positive leading coefficient. Note that if the statement of \cref{thm:Main} holds for polynomials with positive leading coefficient than it also holds for polynomials with negative leading coefficient by replacing $T$ or $S$ (or both) with $T^{-1}$ and $S^{-1}$ respectively.

\begin{prop}\label{prop:Range_of_cocycle_along_p} Let $p:\Z\to\Z$ be a polynomial with a positive leading coefficient and $\deg(p)\geq 2$, then for Lebesgue almost every $y \in \T$,
\[
\lim_{n\to\infty} \dfrac{\left|\mathbf{R}_{n}^{(p)}(y)\right|}{n} = 1.
\]
\end{prop}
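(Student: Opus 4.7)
The idea is to count collisions. Set
\[
C_n(y) := \#\{(j,k) : 1 \leq j < k \leq n,\ S_{p(j)}(f)(y) = S_{p(k)}(f)(y)\}.
\]
A pigeonhole argument---if the $n$ values $S_{p(1)}(f)(y),\ldots,S_{p(n)}(f)(y)$ take $r$ distinct values with multiplicities $m_1,\ldots,m_r$, then $n - r = \sum_i (m_i-1) \leq \sum_i \binom{m_i}{2} = C_n(y)$---gives the pointwise bound $|\mathbf{R}_n^{(p)}(y)| \geq n - C_n(y)$. Combined with the trivial $|\mathbf{R}_n^{(p)}(y)| \leq n$, it suffices to prove $C_n(y)/n \to 0$ for $\lambda$-a.e.\ $y$.

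To bound $E[C_n]$ I would use the cocycle identity. Since $p$ has positive leading coefficient and $\deg(p) \geq 2$, $p$ is eventually strictly increasing, so for all but finitely many pairs $(j,k)$ with $j < k$ one has $p(k) > p(j)$ and
\[
S_{p(k)}(f)(y) - S_{p(j)}(f)(y) = S_{p(k)-p(j)}(f)\bigl(R_\alpha^{p(j)} y\bigr).
\]
Combined with $R_\alpha$-invariance of $\lambda$ and the $d=2$ case of \cref{thm:LCLT} (which yields a constant $C$ with $\lambda(S_m(f) = 0) \leq C/m$ for every $m \geq 1$), this gives $\lambda\bigl(S_{p(j)}(f) = S_{p(k)}(f)\bigr) \leq C/(p(k)-p(j))$. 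Using $p(k)-p(j) \geq c(k-j)k^{\deg p - 1}$ for large $k$ (from $k^d - j^d = (k-j)(k^{d-1} + \cdots + j^{d-1}) \geq (k-j)k^{d-1}$), the resulting double sum evaluates to
\[
E[C_n] \leq O(1) + \sum_{k=1}^n \frac{C'}{k^{\deg p - 1}}\sum_{j=1}^{k-1}\frac{1}{k-j} = \begin{cases} O\bigl((\log n)^2\bigr), & \deg p = 2,\\ O(1), & \deg p \geq 3.\end{cases}
\]

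Finally, I would upgrade this $L^1$ bound to an almost-sure statement using that $C_n(y)$ is non-decreasing in $n$. In the case $\deg p \geq 3$, monotone convergence yields $C_n \uparrow C_\infty$ with $E[C_\infty] < \infty$, so $C_n/n \to 0$ a.s.\ trivially. In the case $\deg p = 2$, Chebyshev along the dyadic subsequence $n_\ell := 2^\ell$ gives $\lambda(C_{n_\ell} > \epsilon n_\ell) = O(\ell^2/2^\ell)$, which is summable; Borel--Cantelli provides $C_{n_\ell}/n_\ell \to 0$ a.s., and monotonicity of $C_n$ with $n_{\ell+1}/n_\ell = 2$ bridges to $C_n/n \to 0$ a.s. The whole argument hinges on the sharp $O(1/m)$ bound for $\lambda(S_m(f)=0)$ delivered by the $2$-dimensional LCLT---in one dimension the corresponding bound would only be $O(1/\sqrt{m})$ and the sums above would diverge---so this is precisely where \cref{thm:LCLT} enters decisively. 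I expect no serious obstacle beyond bookkeeping for the finitely many exceptional pairs $(j,k)$ where $p(j) \geq p(k)$.
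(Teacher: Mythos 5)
Your proposal is correct, and it shares the paper's key ingredients: the cocycle identity $S_{p(k)}(f)-S_{p(j)}(f)=S_{p(k)-p(j)}(f)\circ R_\alpha^{p(j)}$, the $2$-dimensional LCLT giving $\lambda(S_m(f)=0)\leq C/m$, a lower bound on polynomial increments, and Borel--Cantelli along a sparse subsequence combined with monotonicity in $n$. The organization differs in a way that buys you something. The paper bounds, for each $N$, the probability that the $N$-th point is not new, via the two-term estimate $p(N)-p(k)\geq\gamma\left(N+(N-k)^2\right)$ (its Claim on polynomial gaps); summing $\sum_{k<N}(N+(N-k)^2)^{-1}=O(N^{-1/2})$ yields only $\mathbb{E}\left(n-\left|\bR_n^{(p)}\right|\right)=O(\sqrt{n})$, and it then manufactures a variance bound $\mathrm{Var}\left(\left|\bR_n^{(p)}\right|\right)=O(n^{3/2})$ out of this first moment plus the trivial bound $\left|\bR_n^{(p)}\right|\leq n$, applying Chebyshev along $n_k=k^4$. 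You instead count colliding pairs and use the gap bound $p(k)-p(j)\gtrsim(k-j)k^{\deg p-1}$, which after summation gives the much stronger $\mathbb{E}(C_n)=O((\log n)^2)$ for $\deg p=2$ (and $O(1)$ for $\deg p\geq 3$), since $n-\left|\bR_n^{(p)}\right|\leq C_n$; this makes a first-moment (Markov) bound along $n_\ell=2^\ell$ already summable and renders the variance step unnecessary. Two small points: the inequality you invoke at the dyadic step is Markov's, not Chebyshev's (you never compute a variance, nor do you need one); and the ``finitely many exceptional pairs'' do need the observation that pairs with $p(j)=p(k)$ contribute deterministically but only $O(1)$ in total, which your bookkeeping remark covers.
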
 The proposition essentially follows from the following lemma. 
\begin{lemma}\label{lem:expectation_and_variance_bound_on_range}
Let $p:\Z\to\Z$ be a polynomial with a positive leading coefficient and $\deg(p)\geq 2$, then we have

\begin{enumerate}[label=(\alph*)]
\item\label{item_a:R_n_estimate} $\underset{n\to\infty}{\lim}\frac{\mathbb{E}\left(\left|\bR_{n}^{(p)}\right|\right)}{n}=1$.
\item\label{item_b:R_n_estimate}  There exists $K>0$ and $M\in \N$, such that for all $n > M$, we have
    \[
   n-K\sqrt{n}\leq \mathbb{E}\left(\left|\bR_{n}^{(p)}\right|\right)\leq n.
    \]
\item\label{item_c:R_n_estimate}  There exists $C>0$  such that for all $n\in\N$,
    \[
\mathrm{Var}\left(\left|\bR_n^{(p)}\right|\right)\leq C \, n^{\frac{3}{2}}.
    \]
\end{enumerate}

\end{lemma}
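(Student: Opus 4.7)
The plan is to derive all three estimates from a single combinatorial inequality combined with the on-diagonal upper bound supplied by the two-dimensional LLT. If a value appears with multiplicity $m$ in the finite list $(S_{p(1)}(f)(y),\ldots,S_{p(n)}(f)(y))$, it contributes $m-1$ to $n-|\bR_n^{(p)}(y)|$ but $\binom{m}{2}$ to the number of coinciding pairs, and $m-1\leq\binom{m}{2}$ for every $m\geq 1$. This gives the pointwise bound
\[
n-|\bR_n^{(p)}(y)|\;\leq\;\sum_{1\leq i<j\leq n}\mathbf{1}\bigl[S_{p(i)}(f)(y)=S_{p(j)}(f)(y)\bigr].
\]
By $R_\alpha$-invariance, for $i<j$ with $p(i)\neq p(j)$ one has $\lambda(S_{p(i)}(f)=S_{p(j)}(f))=\lambda(S_{|p(j)-p(i)|}(f)=0)$. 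Because $p$ has positive leading coefficient and degree $\geq 2$, $p$ is eventually strictly increasing on $\N$, so only $O(1)$ pairs satisfy $p(i)=p(j)$; these contribute a bounded constant to the expectation.

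The next step is to bound the remaining sum using \cref{thm:LCLT} and polynomial arithmetic. Setting $x=0$ in the LLT produces a constant $C>0$ with $\lambda(S_m(f)=0)\leq C/m$ for every $m\geq 1$. With $d=\deg(p)\geq 2$ and $N_0$ chosen so that $p$ is strictly increasing on $[N_0,\infty)$, the mean-value theorem yields $p(j)-p(i)\gtrsim (j-i)\,i^{d-1}$ for $N_0\leq i<j$. Summing,
\[
\sum_{N_0\leq i<j\leq n}\frac{1}{p(j)-p(i)}\;\lesssim\;\sum_{i=N_0}^n\frac{1}{i^{d-1}}\sum_{j=i+1}^n\frac{1}{j-i}\;=\;O\bigl((\log n)^2\bigr),
\]
and the $O(N_0\cdot n)$ pairs with $\min(i,j)<N_0$ contribute only $O(1)$ since $|p(j)-p(i)|$ then grows polynomially in the larger index. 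Integrating the pointwise inequality and inserting these estimates gives $\mathbb{E}(n-|\bR_n^{(p)}|)=O((\log n)^2)$. Combined with the trivial bound $|\bR_n^{(p)}|\leq n$, this proves \ref{item_b:R_n_estimate} (since $(\log n)^2=o(\sqrt n)$), and \ref{item_a:R_n_estimate} follows upon dividing by $n$ and sending $n\to\infty$.

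For \ref{item_c:R_n_estimate} I plan to avoid any covariance computation entirely. Because $|\bR_n^{(p)}(y)|\leq n$ deterministically, also $|\bR_n^{(p)}(y)|^2\leq n\,|\bR_n^{(p)}(y)|$ pointwise, and hence
\[
\mathrm{Var}\bigl(|\bR_n^{(p)}|\bigr)=\mathbb{E}\bigl(|\bR_n^{(p)}|^2\bigr)-\bigl(\mathbb{E}|\bR_n^{(p)}|\bigr)^2\;\leq\;\mathbb{E}\bigl(|\bR_n^{(p)}|\bigr)\bigl(n-\mathbb{E}|\bR_n^{(p)}|\bigr)\;\leq\;n\cdot K\sqrt n,
\]
invoking \ref{item_b:R_n_estimate} in the last step. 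For the finitely many small $n\leq M$, the trivial bound $\mathrm{Var}\leq n^2$ is absorbed into the final constant $C$.

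The main obstacle is the bookkeeping in the polynomial arithmetic step, particularly the degree-$2$ case where the estimate $\sum_i(\log n)/i=O((\log n)^2)$ only barely beats $\sqrt n$; everything else is mechanical once the LLT on-diagonal upper bound $\lambda(S_m(f)=0)=O(1/m)$ and the elementary inequality $m-1\leq\binom{m}{2}$ are in hand.
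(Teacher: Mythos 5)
Your argument is correct and follows essentially the same route as the paper: after taking expectations, your pair-counting inequality $n-|\bR_n^{(p)}|\leq\#\{\text{coinciding pairs}\}$ is exactly the union bound the paper applies to the events ``new value at time $k$'', and both proofs then combine the on-diagonal LLT bound $\lambda(S_m(f)=(0,0))=O(1/m)$, the cocycle identity $S_{p(j)}(f)-S_{p(i)}(f)=S_{p(j)-p(i)}(f)\circ R_\alpha^{p(i)}$, a lower bound on the gaps $p(j)-p(i)$, and the deterministic bound $|\bR_n^{(p)}|\leq n$ to get the variance from the mean. The only real difference is the gap estimate: your mean-value-theorem bound $p(j)-p(i)\gtrsim (j-i)\,i^{d-1}$ gives the sharper $\mathbb{E}\bigl(n-|\bR_n^{(p)}|\bigr)=O\bigl((\log n)^2\bigr)$, whereas the paper uses $p(j)-p(i)\gtrsim n+(n-k)^2$ and obtains $O(\sqrt n)$; both are more than sufficient for the stated conclusions.
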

\begin{proof}[Proof of \Cref{prop:Range_of_cocycle_along_p}]
Let $k \in \N$ and set $n_k=k^4$. We define,
\[
B_k:=\left\{y\in\T:\ \left|\frac{\left|\bR_{n_k}^{(p)}(y)\right|}{n_k}-1\right|>k^{-1/4}\right\}.
\]
By \Cref{lem:expectation_and_variance_bound_on_range}.\ref{item_b:R_n_estimate}, for $n_k > M$, there exists $K > 0$ so that
\[
\left|\frac{\mathbb{E}\left(\left|\bR_{n_k}^{(p)}\right|\right)}{n_k}-1\right|\leq \frac{K}{k^2}.
\] Hence for all $k \in \N$ such that $k > 16 K^4$ we have,
\begin{align*}
B_k& \subset \left\{y\in \T: \left|\frac{\left|\bR_{n_k}^{(p)}(y)\right|-\mathbb{E}\left(\left|\bR_{n_k}^{(p)}\right|\right)}{n_k}\right|>\frac{1}{2}k^{-1/4} \right\}.
\end{align*}
Finally using Markov's inequality, \Cref{lem:expectation_and_variance_bound_on_range}.\ref{item_c:R_n_estimate} and $n_k = k^4$, we get
\begin{align*}
\lambda\left(B_k\right)&\leq \lambda\left(\left|\frac{\left|\bR_{n_k}^{(p)}(y)\right|-\mathbb{E}\left(\left|\bR_{n_k}^{(p)}\right|\right)}{n_k}\right|>\frac{1}{2}k^{-1/4} \right)\\
&\leq \frac{4\mathrm{Var}\left(\left|\bR_{n_k}^{(p)}\right|\right)}{(n_k)^2 k^{-1/2}}\leq 4Ck^{-\frac{3}{2}}.
\end{align*} Here $C>0$ is as in \Cref{lem:expectation_and_variance_bound_on_range}\ref{item_c:R_n_estimate}. Thus it follows $\sum_{k=1}^{\infty} \lambda\left(B_k\right) < \infty$. We conclude from the Borel–Cantelli lemma that 
\[
\lambda \,\left(\underset{n\rightarrow\infty}{\limsup}\,\, B_k\right) = 0.
\] 
Thus for almost every $y \in \T$, we have
\begin{equation}\label{eq:lim_n_k}
 \lim_{k\to\infty} \frac{\left|\bR_{n_k}^{(p)}(y)\right|}{n_k}=1.  
\end{equation} Observe that for $y \in \T$, the sequence $n\mapsto \left|R_{n}^{(p)}(y)\right|$ is monotone increasing. Thus for any $n \in \N$ with $n_k\leq n<n_{k+1}$, we get 
\[
\frac{n_k}{n_{k+1}}\cdot\frac{\left|\bR_{n_k}^{(p)}(y)\right|}{n_{k}}\leq\frac{\left|\bR_{n_k}^{(p)}(y)\right|}{n_{k+1}}\leq \frac{\left|\bR_{n}^{(p)}(y)\right|}{n}\leq \frac{\left|\bR_{n_{k+1}}^{(p)}(y)\right|}{n_{k}}\leq \frac{n_{k+1}}{n_k}\cdot\frac{\left|\bR_{n_{k+1}}^{(p)}(y)\right|}{n_{k+1}}
\]
By \eqref{eq:lim_n_k}, for almost every $y \in \T$, we have
\[
\lim_{k\to\infty}\left(\frac{n_k}{n_{k+1}}\cdot\frac{\left|\bR_{n_k}^{(p)}(y)\right|}{n_{k}}\right)=\lim_{k\to\infty}\left(\frac{n_{k+1}}{n_k}\cdot\frac{\left|\bR_{n_{k+1}}^{(p)}(y)\right|}{n_{k+1}}\right)
=1.
\] This completes the proof of the proposition. \end{proof}
To prove \cref{lem:expectation_and_variance_bound_on_range}, we will need the following simple claim.
\begin{claim}\label{claim:simple_bound}
Let $p:\Z\to\Z$ be a polynomial with a positive leading coefficient and $\deg(p)\geq 2$. Then there exists $N\in\mathbb{N}$ and $\gamma>0$ so that for every $n>N$ and $1<k<n$,
\[
p(n)-p(k)\geq \gamma\big(n+(n-k)^2\big).
\]
\end{claim}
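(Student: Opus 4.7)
My plan is to split into two regimes based on how close $k$ is to $n$, since the lower bound for $p(n)-p(k)$ behaves very differently in each regime, and the right-hand side $\gamma(n+(n-k)^2)$ is controlled by a different term in each.

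First, write $p(x)=a_d x^d+\cdots+a_0$ with $a_d>0$ and $d\geq 2$. Fix $N_0$ large enough that for all $t\geq N_0/2$ one has $p'(t)\geq \tfrac{d a_d}{2}t^{d-1}$ (using that the leading term dominates). I will take $N>2N_0$. Let $m:=n-k$.

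\textbf{Case A: $k\leq n/2$.} Here I estimate $p(n)-p(k)\geq p(n)-p(n/2)$ using monotonicity of $p$ on $[N_0,\infty)$, and then note that by the dominant-term asymptotics, for $n$ large enough $p(n)-p(n/2)\geq \tfrac{a_d}{2}(1-2^{-d})n^d\geq c_1 n^d$. Since $n+(n-k)^2\leq n+n^2\leq 2n^2$ and $d\geq 2$, this gives $p(n)-p(k)\geq c_1 n^d\geq c_1 n^2\geq \tfrac{c_1}{2}(n+(n-k)^2)$.

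\textbf{Case B: $n/2 < k < n$.} Then $1\leq m<n/2$. I use $p(n)-p(k)=\int_{n-m}^{n}p'(t)\,dt$, and since the entire interval lies in $[n/2,n]\subset[N_0,\infty)$, I get $p'(t)\geq \tfrac{da_d}{2}t^{d-1}\geq c_2 n^{d-1}\geq c_2 n$ there. Hence $p(n)-p(k)\geq c_2 m n$. The key elementary inequality is
\[
mn \;\geq\; \tfrac{1}{2}(m^2+n),
\]
which follows because $m\geq 1$ and $m\leq n$ give $mn\geq n$ and $mn\geq m^2$, so $mn\geq\max(m^2,n)\geq\tfrac{1}{2}(m^2+n)$. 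Combining, $p(n)-p(k)\geq \tfrac{c_2}{2}(n+(n-k)^2)$.

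Taking $\gamma$ to be the minimum of the two constants $c_1/2$ and $c_2/2$, and $N$ large enough for both case bounds to apply, yields the claim. I do not anticipate any genuine obstacle: the argument is elementary calculus, and the only thing worth being careful about is choosing $N_0$ large enough that $p'$ is bounded below by a constant multiple of $t^{d-1}$, so that the case $k>n/2$ (where $k$ itself is large) is handled cleanly without worrying about the lower-order coefficients of $p$.
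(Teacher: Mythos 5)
Your overall strategy is sound and Case B is complete and correct: the bound $p'(t)\geq \tfrac{da_d}{2}t^{d-1}\geq c_2 n$ on $[k,n]\subset[n/2,n]$ together with the elementary inequality $mn\geq\max(m^2,n)\geq\tfrac12(m^2+n)$ does exactly the work needed there. However, there is a genuine (though localized) error in Case A. The inequality $p(n)-p(k)\geq p(n)-p(n/2)$ is equivalent to $p(k)\leq p(n/2)$, and monotonicity of $p$ on $[N_0,\infty)$ only yields this when $k$ itself lies in the monotone region. For $1<k<N_0$ the step can actually fail: take $p(x)=x^2-100x$, $k=2$, $n=100$; then $p(n)-p(k)=196$ while $p(n)-p(n/2)=2500$, so the claimed intermediate inequality is false. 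The conclusion of Case A is still true, but it needs a separate justification for the finitely many small $k$: since $\max_{1<j<N_0}p(j)=:C_0$ is a fixed constant and $p(n/2)\to\infty$, enlarging $N$ so that $p(n/2)\geq C_0$ restores $p(k)\leq p(n/2)$ for all $1<k\leq n/2$ (or, more directly, $p(n)-p(k)\geq p(n)-C_0\geq \tfrac{a_d}{2}n^d$ for $n$ large). With that one-line patch the proof is complete.

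For comparison, the paper argues discretely: it telescopes $p(n)-p(k)=\sum_{j=k}^{n-1}(p(j+1)-p(j))$, bounds each increment below by $\tfrac{c_t}{2}((j+1)^t-j^t)$, factors $n^t-k^t=(n-k)\sum_{l}k^l n^{t-1-l}$, and bounds the sum below by $2k+(n-k)$; the finitely many exceptional small $k$ are then handled by the asymptotic $p(n)-p(j)\sim c_t n^t$. Your continuous version (integrating $p'$) and your case split at $k=n/2$ reach the same key inequality $(n-k)\cdot n\gtrsim (n-k)^2+n$ by a slightly different route; both approaches must deal separately with small $k$, which is precisely the point your write-up glosses over.
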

\begin{proof}
Let $p(n) = \sum_{i=0}^{t} c_i n^{i}$, where $t\geq 2$ and $c_t > 0$. Set $b_n = p(n+1) - p(n)$, then
\[
\lim_{n\to\infty} \dfrac{b_n}{(n+1)^t - n^t} =\lim_{n\to\infty} \dfrac{p(n+1) - p(n)}{(n+1)^t - n^t} = c_t >0.
\]
Thus there exists $M\in \N$ such that for $n \geq M$,
\[
b_n \geq \dfrac{c_t}{2} ((n+1)^t - n^t) > 0.
\] For  $n>k\geq M$, 
\begin{align*}
 p(n) - p(k) =  \sum_{j=k}^{n-1} b_{j} &\geq \sum_{j=k}^{n-1} \dfrac{c_t}{2} ((j+1)^t - j^t) \\
 &=\frac{c_t}{2}\left(n^t-k^t\right)\\
 &=\frac{c_t}{2}(n-k)\left(\sum_{l=0}^{t-1}k^ln^{t-1-l}\right).
\end{align*}
In addition,
\begin{align*}
\sum_{l=0}^{t-1}k^ln^{t-1-l}&\geq n^{t-1}+k^{t-1}\\
&\geq n+k=2k+(n-k).
\end{align*}
We deduce that for all $n>k\geq M$,
\begin{align*}
p(n) - p(k)&\geq \frac{c_t}{2}\left((n-k)^2+k(n-k)\right)\\
&\geq \frac{c_t}{2}\left((n-k)^2+\frac{n}{2}\right).
\end{align*}
Note that the last inequality holds as $k,(n-k)\geq 1$ and at least one of them is no smaller than $\frac{n}{2}$. In addition,  
\[
\lim_{n\to\infty}\frac{\min_{1\leq j< M}|p(n)-p(j)|}{n^t}=c_t>0
\]
and $n^2+n=O\left(n^t\right)$. The claim follows by a standard argument.
\end{proof}
\begin{proof}[Proof of \Cref{lem:expectation_and_variance_bound_on_range}]
For $2\leq k\in \N$, set 
\[
A_k:=\left\{x\in\T:\ \forall l\in\{1,\ldots,k-1\},\ S_{p(k)}(f)(x)\neq S_{p(l)}(f)(x)\right\}.
\]
Note that for $y \in \T$, we have
\[
\left|\bR_n^{(p)}(y)\right|:=1+\sum_{k=2}^n1_{A_k}(y).
\]
For every $l<k$, 
\[
S_{p(k)}(f)-S_{p(l)}(f)=S_{p(k)-p(l)}(f)\circ R_\alpha^{p(l)}
\] Now using  the local central limit theorem (\cref{thm:LCLT}) for $d=2$, this implies the existence of a constant $\beta>0$, such that for every $1\leq l<k$, we have
\begin{align*}
\lambda\left(S_{p(k)}(f)-S_{p(l)}(f)=(0,0)\right)&=\lambda\left(S_{p(k)-p(l)}(f)\circ R_\alpha^{p(l)}=(0,0)\right)\\
&=\lambda\left(S_{p(k)-p(l)}(f)=(0,0)\right)\leq \frac{\beta}{p(k)-p(l)}.
\end{align*}
As $\deg(p)\geq 2$, it follows from \Cref{claim:simple_bound}, that there exists $M\in\N$ and $c>0$ such that for all $n>M$ and $1\leq k<n$, we have
\begin{equation}\label{eq:important_bound_for_range}
\lambda\left(S_{p(n)}(f)=S_{p(k)}(f)\right)\leq \frac{c}{n+(n-k)^2}.
\end{equation}
For every $N> M$, 
\begin{align*}
\lambda\left(\T\setminus A_N\right)&\leq \sum_{k=1}^{N-1}\lambda\left(S_{p(N)}(f)=S_{p(k)}(f)\right)\\
&\leq \sum_{k=1}^{N-1}\frac{c}{N+(N-k)^2}\\
&\leq\sum_{k=1}^{N-1}\frac{c}{N+k^2}\\
&\leq c\int_1^N\frac{dx}{N+x^2}\leq \frac{\pi c}{\sqrt{N}}.
\end{align*} 
This implies that for all $N>M$,
\[
1-\frac{\pi c}{\sqrt{N}}\leq \lambda\left(A_N\right) \leq 1.
\]
To see \cref{item_a:R_n_estimate}, observe that,
\[
\underset{n\to\infty}{\lim}\,\,\frac{\mathbb{E}\left(\left|\bR_n^{(p)}\right|\right)}{n}=\underset{n\to\infty}{\lim}\,\,\frac{1}{n}\left(1+\sum_{N=2}^n\lambda\left(A_N\right)\right)= 1. 
\] 
\Cref{item_b:R_n_estimate} follows from
\begin{align*}
\mathbb{E}\left(\left|\bR_n^{(p)}\right|\right)&\geq \sum_{N=M}^n\left(1-\frac{\pi c}{\sqrt{N}}\right),\\
&\geq n-K\sqrt{n},
\end{align*}
where $K>0$, is a constant.
To see \cref{item_c:R_n_estimate} note that by \cref{item_b:R_n_estimate} and the fact that for all $y \in \T$, $\left|R_n^{(p)}(y)\right|\leq n$,
\begin{align*}
\mathrm{Var}\left(\left|\bR_n^{(p)}\right|\right)&=\mathbb{E}\left(\left|\bR_n^{(p)}\right|^2\right)-\left(\mathbb{E}\left(\left|\bR_n^{(p)}\right|\right)\right)^2,\\
&\leq n^2-\left(n-C\sqrt{n}\right)^2,\\
&\leq 2Cn^{\frac{3}{2}}.
\end{align*}
\end{proof} Fix $y\in\T$. Let $R^{(p)}(y) \subset \Z^2$ be as defined in \eqref{eq:R_p_y}. Observe that for every $z \in R^{(p)}(y)$ there exists a minimal $n\in\N$, such that $S_{p(n)}(f)(y) = z$. We define
\begin{equation}\label{eq:k_p_y}
   K^{(p)}(y):=\left\{n\in\N:\ \forall \ 1\leq m < n,\ S_{p(m)}(f)(y)\neq S_{p(n)}(f)(y),\, S_{p(n)}(f)(y) \neq (0,0)\right\}\subset \N.   
\end{equation} to be the collection of all such points. The results on the range give the following.
\begin{cor}\label{cor:k_p_y_banach} Let $p:\Z\to\Z$ be a polynomial with a positive leading coefficient and $\deg(p)\geq 2$, then for Lebesgue almost every $y \in \T$, $K^{(p)}(y)$ has Banach density one.
\end{cor}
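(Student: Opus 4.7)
By \Cref{prop:Range_of_cocycle_along_p}, for almost every $y\in\T$ one has $|\mathbf{R}_N^{(p)}(y)|/N\to 1$, and since $|\mathbf{R}_N^{(p)}(y)|$ counts exactly the indices $n\in[1,N]$ with $S_{p(n)}(f)(y)\notin\{S_{p(m)}(f)(y):m<n\}$ while $K^{(p)}(y)$ differs from this set of first-occurrence times by at most one element (the first $n$, if any, at which $S_{p(n)}(f)(y)=(0,0)$), one immediately obtains natural density one for $K^{(p)}(y)$. Upgrading to Banach density one amounts to proving $\sup_M L(M,N;y)/N\to 0$ almost surely as $N\to\infty$, where $L(M,N;y):=|\{n\in(M,M+N]:n\notin K^{(p)}(y)\}|$.

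My plan is to bound $L\leq L_{\mathrm{in}}+L_{\mathrm{past}}$, where $L_{\mathrm{in}}(M,N;y)$ counts indices $n\in(M,M+N]$ with $S_{p(n)}(f)(y)=S_{p(m)}(f)(y)$ for some $m\in(M,n)$, and $L_{\mathrm{past}}(M,N;y)$ counts those arising from $m\in\{0,1,\ldots,M\}$ (with $S_{p(0)}:=(0,0)$). Introducing the shifted polynomial $p_M(k):=p(k+M)-p(M)$ and using the cocycle identity $S_{p(k+M)}(f)(y)-S_{p(M)}(f)(y)=S_{p_M(k)}(f)(R_\alpha^{p(M)}y)$, one sees that $L_{\mathrm{in}}(M,N;y)$ equals the non-record count in the length-$N$ prefix of the cocycle for $p_M$ at base point $R_\alpha^{p(M)}y$. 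Because $p_M$ has the same degree and positive leading coefficient as $p$, the constants in \Cref{claim:simple_bound} (and hence in the proof of \Cref{lem:expectation_and_variance_bound_on_range}) apply to $p_M$ \emph{uniformly in $M$}, so $\mathbb{E}[L_{\mathrm{in}}(M,N)]=O(\sqrt{N})$ and $\mathrm{Var}(L_{\mathrm{in}}(M,N))=O(N^{3/2})$ with constants independent of~$M$. For $L_{\mathrm{past}}$, applying the LCLT bound $\lambda(S_{p(n)-p(m)}(f)=0)\leq \beta/(p(n)-p(m))$ together with $p(n)-p(m)\geq \gamma((n-m)^2+n)$ and summing directly gives $\mathbb{E}[L_{\mathrm{past}}(M,N)]=O(\sqrt{N})$ uniformly in~$M$. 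I would then combine these uniform moment bounds with Chebyshev's inequality and Borel--Cantelli along a sparse subsequence $N_k:=k^4$, using the $1$-Lipschitz continuity of $M\mapsto L(M,N;y)$ to discretize the sup over starting positions, to conclude that $\sup_M L(M,N_k;y)/N_k\to 0$ almost surely; monotonicity in $N$ then interpolates the conclusion to all~$N$.

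\textbf{The main obstacle} is the uniformity in~$M$. Natural density one does not in general imply Banach density one -- a subset of $\N$ can omit blocks of length $k$ near positions $2^k$ while still having natural density one -- so one cannot simply apply \Cref{prop:Range_of_cocycle_along_p} to each shifted polynomial $p_M$ separately and take a countable intersection, as that would only furnish an a.s.~rate of convergence depending on~$M$. The feature that saves the argument is that the quantitative estimates in \Cref{claim:simple_bound} and \Cref{lem:expectation_and_variance_bound_on_range} depend only on the degree and leading coefficient of the polynomial, which are preserved under the translation $p\mapsto p_M$; this uniformity is precisely what permits a single probabilistic argument to control all translates of length-$N$ blocks simultaneously.
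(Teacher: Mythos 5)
The paper's entire proof of this corollary is your first paragraph: since every $z\in\mathbf{R}_n^{(p)}(y)\setminus\{(0,0)\}$ corresponds to a unique first-occurrence time in $K^{(p)}(y)\cap[0,n]$, one has $\bigl|\,|\mathbf{R}_n^{(p)}(y)|-|K^{(p)}(y)\cap[0,n]|\,\bigr|\le 1$, and \Cref{prop:Range_of_cocycle_along_p} then gives $|K^{(p)}(y)\cap[0,n]|/n\to 1$. That is all the authors prove, and it is all they use: in \Cref{lem:divergence} the corollary is invoked only to obtain $|[N_k,M_k-1]\cap\mathcal{K}_y|/M_k\to 1$ along the specific sequence $M_k$ with $N_k=o(M_k)$, for which natural density one of $K^{(p_1)}(y)\cap K^{(p_2)}(y)$ suffices. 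So you have correctly reproduced the intended argument, and you are also right that, read as \emph{lower} Banach density one, the statement does not follow from natural density one alone.

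Your proposed upgrade, however, does not close. The uniform-in-$M$ first-moment bounds are fine: \Cref{claim:simple_bound} does transfer to $p_M$ with the same constants, giving $\mathbb{E}[L(M,N)]=O(\sqrt N)$, and in fact $O(N/\sqrt{M})$ for $M\ge N$. The gap is the final Chebyshev--Borel--Cantelli step: $\sup_M$ ranges over infinitely many starting positions, and after discretizing at scale $\epsilon N$ you still have infinitely many windows, while Chebyshev with $\mathrm{Var}(L(M,N))\le N\,\mathbb{E}[L(M,N)]=O(N^{2}/\sqrt{M})$ only yields $\mathbb{P}(L(M,N)>\epsilon N)=O(\epsilon^{-2}M^{-1/2})$, which is not summable over $M$; higher moments bounded trivially by $N^{k-1}\mathbb{E}[L(M,N)]$ do not improve the decay in $M$. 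Controlling the supremum over all windows would require genuine decorrelation estimates between distinct collision events, which neither your sketch nor the paper provides. The honest reading is that the corollary asserts (and the paper proves and uses) natural density one, for which your first paragraph is already a complete proof; the Banach-density strengthening remains open as written.
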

\begin{proof}
For every $z\in \bR_n^{(p)}(y)\setminus\{(0,0)\}$ there exists a unique $k\in K^{(p)}(y)\cap [0,n]$ such that $S_{p(k)}(f)(y) = z$. Hence it follows,  
\[
\left|\left|\bR_n^{(p)}(y)\right|-\left|K^{(p)}(y)\cap[0,n]\right|\right|\leq 1.
\]
The claim follows from this and Proposition \ref{prop:Range_of_cocycle_along_p}. 
\end{proof}
Choose two increasing sequences of natural numbers $\left(N_k\right)_{k=1}^\infty$ and $\left(M_k\right)_{k=1}^\infty$ such that:
\begin{itemize}
 \item For every $k\in\N$, $N_k< M_k<N_{k+1}$.
 \item $\lim_{k\to\infty}\frac{M_k}{N_k}=\lim_{k\to\infty}\frac{N_{k+1}}{M_k}=\infty.$
\end{itemize}
We define $J \subset \N$ via,
\begin{equation}\label{eq:def_J}
    J:=\N\cap \uplus_{k=1}^\infty \left(N_k,M_k\right]
\end{equation} It is immediate that, 
\[
\lim_{k\to\infty}\frac{J\cap [0,M_k]}{M_k}=1\ \text{and}\ \lim_{k\to\infty}\frac{J\cap [0,N_k]}{N_k}=0.
\] In particular $J$ is of lower Banach density $0$ and of upper Banach density $1$. 
Let $p_1, p_2 : \Z \rightarrow \Z$ be polynomials with positive leading coefficients and $\deg(p_1), \deg(p_2) \geq 2$. Let $D\subset \T$ be the set of all points $y\in\T$ such that
\[
\lim_{n\to\infty}\frac{\left|\bR_n^{(p_1)}(y)\right|}{n}=\lim_{n\to\infty}\frac{\left|\bR_n^{(p_2)}(y)\right|}{n}=1.
\]
By \cref{prop:Range_of_cocycle_along_p}, $D$ has full measure. 
For $y \in D$, we define
\begin{equation}\label{eq:def_curly_K_y}
\mathcal{K}_y=K^{(p_1)}(y)\cap K^{(p_2)}(y)\cap J.
\end{equation} Observe that by \cref{cor:k_p_y_banach} and its proof, for all $y\in D$ and $j\in \{1,2\}$, 
\[
S(j,y) : = \left\{S_{p_j(n)}(f)(y):\ n\in \mathcal{K}_y\right\}\subset \Z^2,
\] is infinite and contains distinct terms. For  $y \in D$ and $j\in \{1,2\}$, the complement of $S(j,y)$ is also infinite. To see this observe that for $y \in D$, we have
\[
\bigcup_{k=1}^\infty \left(\bR_{N_{k+1}}^{(p_j)}(y)\setminus \bR_{M_k}^{(p_j)}(y)\right)\subset \Z^2\setminus S(j,y). 
\] Since for all $y \in D$, we have
\[
\lim_{k\to\infty}\frac{\left|\bR_{N_{k+1}}^{(p_j)}(y)\setminus \bR_{M_k}^{(p)}(y)\right|}{N_{k+1}}=1,
\] it follows that $\Z^2\setminus S(j,y)$ is infinite for $j\in \{1,2\}$. 

For $j \in \{1,2\}$, the mapping $\Theta_j: D\to 2^{\Z^2}$, defined by 
\begin{equation}\label{eq:theta_j}
    \Theta_j(y):=S(j,y) 
\end{equation} is measurable because of the following claim. 
\begin{lemma}\label{lem:O_measurable}
The map $\mathcal{O}: \T \rightarrow 2^{J}$ given by $\mathcal{O} (y) := \mathcal{K}_y$, is Borel.
\end{lemma}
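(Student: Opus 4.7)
The plan is to equip $2^J$ with its standard Borel $\sigma$-algebra, namely the one pulled back via the characteristic-function identification $2^J \cong \{0,1\}^J$ (product topology). This $\sigma$-algebra is generated by the subbasic cylinders
\[
\mathcal{C}_n := \{A \subset J : n \in A\}, \qquad n \in J,
\]
so to verify that $\mathcal{O}$ is Borel it is enough to show that for every $n \in J$ the set $\mathcal{O}^{-1}(\mathcal{C}_n) = \{y \in \T : n \in \mathcal{K}_y\}$ is Borel in $\T$.

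Next I would unpack the definition of $\mathcal{K}_y = K^{(p_1)}(y) \cap K^{(p_2)}(y) \cap J$. For a fixed $n \in J$, the condition $n \in \mathcal{K}_y$ reads
\[
\bigwedge_{j \in \{1,2\}} \Bigl( S_{p_j(n)}(f)(y) \neq (0,0) \,\wedge\, \bigwedge_{1 \leq m < n} S_{p_j(m)}(f)(y) \neq S_{p_j(n)}(f)(y)\Bigr),
\]
which is a finite Boolean combination indexed by $(j,m)$ with $j \in \{1,2\}$ and $1 \leq m \leq n$.

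The key point is then that each $S_{p_j(m)}(f) : \T \to \Z^2$ is a Borel map: $f$ is Borel by choice, $R_\alpha$ is continuous, and $S_N(f) = \sum_{k=0}^{N-1} f \circ R_\alpha^k$ is a finite sum of Borel $\Z^2$-valued functions, hence Borel. Consequently, for each pair $(j,m)$ with $m < n$, the set
\[
\{y \in \T : S_{p_j(m)}(f)(y) \neq S_{p_j(n)}(f)(y)\}
\]
is the preimage, under the Borel map $y \mapsto (S_{p_j(m)}(f)(y), S_{p_j(n)}(f)(y))$, of the (open) complement of the diagonal in $\Z^2 \times \Z^2$, hence Borel in $\T$; similarly $\{y : S_{p_j(n)}(f)(y) \neq (0,0)\}$ is Borel. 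Intersecting these finitely many Borel subsets over $j \in \{1,2\}$ and $1 \leq m < n$ shows $\{y : n \in \mathcal{K}_y\}$ is Borel, completing the verification. There is no serious obstacle here beyond bookkeeping; the only mild point is choosing the right Borel structure on $2^J$, after which measurability reduces to the measurability of the finitely many ergodic sums $S_{p_j(m)}(f)$ for $m \leq n$.
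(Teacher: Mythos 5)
Your proposal is correct and follows essentially the same route as the paper: both identify $2^{J}$ with $\{0,1\}^{J}$ under the product topology, reduce measurability of $\mathcal{O}$ to checking cylinder preimages, and observe that $\{y : n \in \mathcal{K}_y\}$ is a finite Boolean combination of Borel conditions on the Borel maps $S_{p_j(m)}(f)$. The only cosmetic difference is that you check subbasic cylinders while the paper checks all finite cylinders via the sets $B_n$ and their complements, which is equivalent.
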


\begin{proof}
Enumerate $J$ in increasing order and consider $2^{J}$ endowed with product topology based on the enumeration. We define for $n\in J$, 
\[
B_n:=\bigcap_{i\in\{1,2\}}\left\{y\in \T:\ \forall 1\leq j<n,\ S_{p_i(n)}(f)(y)-S_{p_i(j)}(f)(y)\neq (0,0)\ \text{and}\ S_{p_i(n)}(f)(y)\neq (0,0)\right\}.
\]
Since $f$ is measurable, $B_n\in\B(\T)$. For $A\in \B(\T)$ and $\epsilon\in\{0,1\}$, define
\[
A^\epsilon:=\begin{cases}
A, &\epsilon=1,\\
\T\setminus A,&\epsilon=0.
\end{cases}
\]
 For $F\subset J$ finite and $z\in\{0,1\}^F$, let $[z]_{F}$ be the corresponding cylinder set given by 
\[
[z]_{F} = \{\eta \in \{0,1\}^J: \eta(i) = z(i),\,\, \forall i\in F\}.
\] The map $\mathcal{O}$ is measurable because for all $F\subset J$ finite and $z\in\{0,1\}^F$, \[
\mathcal{O}^{-1}\left([z]_F\right)=\bigcap_{n\in F}\left(B_n\right)^{z_n}\in\B(\T).
\] \end{proof} In what follows, for $y \in D$, we will define a permutation $\pi_y: \Z^2 \rightarrow \Z^2$, such that $\pi_y$ maps
\begin{equation}\label{eq:pi_y_maps}
    (0,0) \mapsto (0,0), \hspace{4mm} \mathrm{and} \hspace{4mm} S_{p_2(n)} (f) (y) \mapsto S_{p_1(n)} (f) (y),\, \forall n \in \mathcal{K}_y.
\end{equation} To this effect, for $y \in D$ we fix an enumeration of $\mathcal{K}_y = \{k_1(y)<k_2(y)< \ldots\}$. For ease of notation, when $y$ is known, we will denote $\mathcal{K}_y = \{k_1 < k_2 < \ldots\}$. Thus for $y \in D$ and $j\in \{1,2\}$, we enumerate $S(j,y) = \{S_{p_j(k_i)} (f) (y) \}_{i=1}^{\infty} \subset \Z^2$. For $y \in D$ and $j \in \{1,2\}$ we set 
\begin{equation}\label{eq:L_j_y}
    L(j,y) = \Z^2\setminus (S(j,y) \cup \{(0,0)\}) = \Z^2 \setminus (\{S_{p_j(k_i)} (f) (y) \}_{i=1}^{\infty} \cup \{(0,0)\}) \subset \Z^2.
\end{equation} $L(j,y)$ is also infinite as discussed above. Let $L(j,y) := \{\ell(j,y)_1, \ell(j,y)_2,\ldots\} \subset \Z^2$ be an enumeration of the set $L(j,y)$. For $y \in D$, $j \in \{1,2\}$, we have partition of $\Z^2$ of the form
\[
\Z^2 = \{(0,0)\} \cup S(j,y) \cup L(j,y) = \{(0,0)\} \cup \{S_{p_j(k_i)} (f) (y) \}_{i=1}^{\infty} \cup \{\ell(j,y)_i\}_{i=1}^{\infty}.
\] For $y\in D$, $j \in \{1,2\}$, let $\pi_{p_j,y}: \Z \rightarrow \Z^2$, be a bijective map given by,
\begin{equation}\label{eq:pi_p_j_y}
    \pi_{p_j,y}(i):=\begin{cases}
(0,0), &\ \text{for}\,\, i = 0;\\
S_{p_j(k_i)} (f)(y), &\ \text{for}\,\, i \geq 1;\\ 
\ell(j,y)_{-i}. &\ \text{for}\,\, i \leq -1.
\end{cases}
\end{equation} For $y \in D$, we define a map, $\pi_y: \Z^2 \rightarrow \Z^2$, by 
\begin{equation}\label{eq:pi_y}
    \pi_{y} = \pi_{p_1,y} \circ \pi^{-1}_{p_2,y}.
\end{equation} Note that $\pi_y$ is a permutation of $\Z^2$ and it satisfies \eqref{eq:pi_y_maps} as needed.

Let $a:\N\to\Z^2$ and define a metric on the permutations of $\Z^2$ by for all bijections $\pi,\eta:\Z^2\to\Z^2$, 
\[
d(\pi,\eta):=2^{-\inf\left\{n\in\ \N:\ \pi\left(a_n\right)\neq \eta\left(a_n\right)\right\}}+2^{-\inf\left\{n\in\ \N:\ \pi^{-1}\left(a_n\right)\neq \eta^{-1}\left(a_n\right)\right\}}.
\]
The space of permutations of $\Z^2$ with this metric is a Polish space. Below we argue that the for Lebesgue almost every $y \in \T$, $y\mapsto \pi_y$ is a measurable map to the permutations of $\Z^2$. 

\begin{lemma}\label{lem:measurability_of_pi}
The map $y\mapsto \pi_y$ is a measurable map from $D \subset \T$ to the permutations of $\Z^2$.    
\end{lemma}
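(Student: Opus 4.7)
The plan is to reduce Borel measurability of $y\mapsto\pi_y$ to measurability of pointwise evaluations. By the very definition of the metric $d$ on the space of permutations, convergence in $d$ is equivalent to pointwise convergence of both $\pi(a_n)$ and $\pi^{-1}(a_n)$ for every $n$; since $\{a_n\}$ enumerates $\Z^2$, it follows that $y\mapsto\pi_y$ is Borel into this Polish space if and only if, for every $v\in\Z^2$, both $y\mapsto\pi_y(v)$ and $y\mapsto\pi_y^{-1}(v)$ are Borel measurable as $\Z^2$-valued functions. Using the factorization $\pi_y=\pi_{p_1,y}\circ\pi_{p_2,y}^{-1}$ together with the tautology $\{y:\pi_{p_j,y}^{-1}(v)=i\}=\{y:\pi_{p_j,y}(i)=v\}$, the entire task reduces to showing that for every $j\in\{1,2\}$ and every $i\in\Z$, the map $y\mapsto\pi_{p_j,y}(i)$ is Borel on $D$.

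For $i=0$ the value is constant. For $i\geq 1$, we have $\pi_{p_j,y}(i)=S_{p_j(k_i(y))}(f)(y)$, where $k_i(y)$ is the $i$-th element of $\mathcal{K}_y$. The first step is to leverage \cref{lem:O_measurable}: for every fixed $n\in J$, the event
\[
\{y:k_i(y)=n\}=\{y:n\in\mathcal{K}_y\}\cap\{y:|\mathcal{K}_y\cap[1,n-1]|=i-1\}
\]
is the preimage under $\mathcal{O}$ of a Borel subset of $2^J$, and hence Borel. Decomposing $D$ as the disjoint countable union $\bigsqcup_{n\in J}\{k_i(y)=n\}$ and using measurability of $f$ and of $R_\alpha^{p_j(n)}$, one obtains measurability of $y\mapsto S_{p_j(k_i(y))}(f)(y)$.

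The slightly more delicate case is $i\leq -1$, where $\pi_{p_j,y}(i)=\ell(j,y)_{-i}$ is the $(-i)$-th element in an enumeration of the infinite set $L(j,y)=\Z^2\setminus(S(j,y)\cup\{(0,0)\})$. The plan is to fix once and for all an enumeration $\Z^2=\{b_1,b_2,\ldots\}$ and to set $\ell(j,y)_i:=b_{\rho_i(j,y)}$, where $\rho_i(j,y)$ is the $i$-th index $m\geq 1$ with $b_m\notin S(j,y)\cup\{(0,0)\}$. The key measurability claim, which I expect to be the only real obstacle, is that for each fixed $m\in\N$,
\[
\{y\in D:b_m\in S(j,y)\}=\bigcup_{n\in J}\bigl(\{y:n\in\mathcal{K}_y\}\cap\{y:S_{p_j(n)}(f)(y)=b_m\}\bigr)
\]
is a countable union of Borel sets, which is immediate from \cref{lem:O_measurable} and measurability of $f$. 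From here, each $y\mapsto\rho_i(j,y)$ is Borel, and hence so is $y\mapsto\ell(j,y)_{-i}$. Combining the cases $i=0$, $i\geq 1$, and $i\leq -1$ gives measurability of $y\mapsto\pi_{p_j,y}(i)$ for all $i\in\Z$ and $j\in\{1,2\}$, which, by the reduction in the first paragraph, yields the lemma.
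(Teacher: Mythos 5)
Your proof is correct and follows essentially the same route as the paper: the key input is \cref{lem:O_measurable}, from which measurability of $y\mapsto S(j,y)$ and $y\mapsto L(j,y)$ follows, and measurability of $y\mapsto\pi_y$ is then read off from the definitions of $\pi_{p_j,y}$ and $\pi_y$ (your first paragraph just makes explicit the standard reduction to pointwise evaluations $y\mapsto\pi_y(v)$, $y\mapsto\pi_y^{-1}(v)$). You are in fact more careful than the paper on one point: you fix a canonical, hence measurable, enumeration of $L(j,y)$ via a fixed enumeration of $\Z^2$, whereas the paper simply says ``let $\{\ell(j,y)_i\}$ be an enumeration'' without specifying a measurable choice.
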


\begin{proof} To see this note that \cref{lem:O_measurable}, implies that the map $\Theta_j$ defined in \eqref{eq:theta_j} is measurable for $j \in \{1,2\}$. By definition of $L(j,y)$, (see \eqref{eq:L_j_y}) it follows that the map $y \mapsto L(j,y)$ from $D$ to $2^{\Z^2}$ is measurable for $j \in \{1,2\}$. This together with the definition of $\pi_{p_j,y}$ and $\pi_y$ implies that $y\mapsto \pi_y$ is a measurable map from $D \subset \T$ to the permutations of $\Z^2$. \end{proof} 

Recall that $X = \T \times \Sigma$ where $\T = \R /\Z$ and $\Sigma = \{0,1\}^{\Z^2}$. For $y\in D$, let $\pi_y$ be as above, we set $\Psi_{\pi_y}: \Sigma \rightarrow \Sigma$ to be the map
\begin{equation}\label{eq:Psi_pi_y}
    \Psi_{\pi_y} (\omega)(i,j):=\begin{cases}
\omega(0,0), &\ \text{for}\,\, (i,j) = (0,0);\\
1-\omega(\pi_y(i,j)) = 1-\omega(S_{p_1(n)} (f)(y)), &\ \text{for}\, (i,j) = S_{p_2(n)} (f)(y), n \in \mathcal{K}_y ;\\
\omega(\pi_y(i,j)), &\  \text{otherwise}.
\end{cases}
\end{equation} Let $Q: X \rightarrow X$ be given by,
\begin{equation}\label{eq:def_R}
    Q(y,\omega) :=\begin{cases}
(y, \Psi_{\pi_y} \omega ), &\ \text{for}\,\, y \in D;\\
(y,\omega), &\ \text{for}\,\, y \in \T\setminus D.
\end{cases}
\end{equation} We finally define, $S : X \rightarrow X$ as, 
\begin{equation}\label{def:S}
    S : = Q^{-1} \circ T \circ Q.
\end{equation} Observe that for $n \in \N$, $y \in D \cap R_{\alpha}^{-n} D$ and $\omega \in \Sigma$, we have
\begin{equation}\label{eq:S^n}
    S^n(y, \omega) = (y+n\alpha, (\Psi^{-1}_{\pi_{y + n \alpha}} \circ \sigma_{S_n(f)(y)} \circ \Psi_{\pi_y}) (\omega )).
\end{equation}
We argue that $(X,\mathcal{X},\mu,S)$ is an ergodic measure preserving system with zero entropy since the map $Q$ defined in \eqref{eq:def_R} is an invertible measure preserving transformation. 

In order to prove that the map $Q: X \rightarrow X$, defined in \eqref{eq:def_R} is an invertible measure preserving transformation we will rely on Souslin's Theorem (see \cite[Theorem 14.12]{Kechris_1995}) which says that if $f: A \rightarrow B$ is a Borel bijection, then $f$ is a Borel isomorphism (in other words $f^{-1}$ is also a Borel map). 

\begin{prop}\label{prop:R_mps} $(X,\mathcal{X}, \mu, Q)$ is an invertible measure preserving system. 
\end{prop}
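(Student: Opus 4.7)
The plan is to establish three things: (i) $Q$ is Borel measurable as a map $X \to X$, (ii) $Q$ is a bijection whose inverse is Borel measurable (so that it is an invertible measure preserving transformation in the proper sense), and (iii) $Q$ preserves $\mu = \lambda \times \nu$. Measurability will follow from \cref{lem:measurability_of_pi} together with the fact that the bit-flipping occurs at positions $\{S_{p_2(n)}(f)(y) : n \in \mathcal{K}_y\}$, which is measurable in $y$ by \cref{lem:O_measurable}. More concretely, for any finite cylinder $[b]_v \subset \Sigma$ and any Borel $B \subset \T$, the preimage $Q^{-1}(B \times [b]_v)$ decomposes according to which positions in $U_v$ fall in $\pi_y(B_y)$, where $B_y := \{S_{p_2(n)}(f)(y) : n \in \mathcal{K}_y\}$; each decomposition piece is Borel in $y$, and since cylinders generate $\mathcal{X}$, this gives Borel measurability of $Q$.

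Next, for each $y \in D$, I would write down the inverse of $\Psi_{\pi_y}$ explicitly: if $\eta = \Psi_{\pi_y}(\omega)$, then for $(a,b) \in \Z^2$,
\[
\omega(a,b) = \begin{cases} \eta(0,0), & (a,b) = (0,0), \\ 1 - \eta(\pi_y^{-1}(a,b)), & (a,b) = S_{p_1(n)}(f)(y),\ n \in \mathcal{K}_y, \\ \eta(\pi_y^{-1}(a,b)), & \text{otherwise,} \end{cases}
\]
using that $\pi_y$ maps $S_{p_2(n)}(f)(y) \mapsto S_{p_1(n)}(f)(y)$ for $n \in \mathcal{K}_y$ and fixes $(0,0)$. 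This shows $\Psi_{\pi_y}$ is a bijection of $\Sigma$, and hence $Q$ is a bijection of $X$ (being the identity on the $\mu$-null set $(\T \setminus D) \times \Sigma$ and a fiberwise bijection on $D \times \Sigma$). Since $Q$ is a Borel bijection between standard Borel spaces, Souslin's theorem (\cite[Theorem 14.12]{Kechris_1995}) ensures $Q^{-1}$ is also Borel.

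It remains to show $Q_*\mu = \mu$. Since $Q$ fixes the first coordinate and $\mu = \lambda \times \nu$, it suffices to show that for $\lambda$-a.e.\ $y \in D$ the map $\Psi_{\pi_y} : \Sigma \to \Sigma$ preserves $\nu$. The map $\Psi_{\pi_y}$ is the composition of the coordinate permutation $\omega \mapsto \omega \circ \pi_y$ (which preserves $\nu$ because $\nu$ is an i.i.d.\ product measure indexed by $\Z^2$) with the bit-flip on the set $B_y \subset \Z^2$, given by $\omega(z) \mapsto 1 - \omega(z)$ for $z \in B_y$ and identity elsewhere. The bit-flip preserves $\nu$ because each marginal $\tfrac{1}{2}\delta_0 + \tfrac{1}{2}\delta_1$ is invariant under $x \mapsto 1-x$ and the coordinates are independent. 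This can be verified on the generating algebra of finite cylinder sets and extended by the $\pi$–$\lambda$ theorem. Composing, $\Psi_{\pi_y}$ preserves $\nu$, and Fubini then yields $Q_*\mu = \mu$.

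The main subtlety is purely bookkeeping: one must be careful that the piece of $\Psi_{\pi_y}$ acting on $(0,0)$ and on $B_y$ is consistent with the global permutation $\pi_y$ so that the resulting map is genuinely a measurable bijection of $\Sigma$; once the inverse is written down explicitly as above, the invariance of $\nu$ is a one-line observation from the symmetry of the product marginals.
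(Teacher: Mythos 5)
Your proposal is correct and follows essentially the same route as the paper: both arguments rest on the measurability of $y\mapsto\pi_y$ (\cref{lem:measurability_of_pi}), invoke Souslin's theorem to upgrade the Borel bijection to a Borel isomorphism, and verify $\mu$-invariance on rectangles $B\times[z]_F$ via Fubini together with the invariance of the Bernoulli $(\tfrac12,\tfrac12)$ product measure under coordinate permutations and bit flips (the paper simply computes $\nu([z(y)]_{\pi_y(F)})=2^{-|F|}$ directly rather than factoring $\Psi_{\pi_y}$). One cosmetic slip: the decomposition for measurability should be indexed by which positions of $U_v$ lie in $B_y=S(2,y)$ (and by the values of $\pi_y$ there), not in $\pi_y(B_y)=S(1,y)$, but this does not affect the argument.
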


\begin{proof} 
The map $Q$ is a bijection, hence by Souslin's theorem if $Q^{-1}$ is measurable, then  $Q$ is a Borel isomorpshim. Fix $y\in\T$. For every $F\subset \Z^2$ and $z\in \{0,1\}^F$, define $z(y)\in\{0,1\}^{\pi_y(F)}$ as follows, if $y\in D$ then, 
\[
z(y)_{\pi_y(j)}:=\begin{cases}
1-z_{\pi_y(j)},& j\in S(1,y)\cap F\\
z_{\pi_y(j)},& j\in F\setminus S(1,y),
\end{cases}
\]
and if $y\notin D$ then $z(y)=z$ and $\pi_y$ is the identity map on $\Z^2$. 
Now for every set of the form $B\times [z]_F\in\B(\T)\times\B(\Sigma)$,
\[
Q\left(B\times [z]_F\right):=\left\{(y,\omega):\ y\in B \ \text{and}\ \omega|_{\pi_y(F)}=z(y)\right\}. 
\] By \cref{lem:measurability_of_pi}, and the measurability of $y\mapsto S(1,y)$, it follows that $Q\left(B\times [z]_F\right)\in\B(\T)\times\B(\Sigma)$. Hence the map $Q^{-1}$ is measurable. The map $Q$ is measure preserving since for every $B\in\B(\T)$, $F\subset \Z^2$ finite and $z\in\{0,1\}^F$,
\begin{align*}
\mu\left(Q\left(B\times [z]_F\right)\right)&=\int_B \nu\left([z(y)]_F\right) d\lambda(y),\ \ \text{by Fubini theorem}\\
&=\int_B 2^{-|F|}d\lambda(y)=\mu\left(B\times [z]_F\right).
\end{align*} \end{proof}

\begin{cor} $(X,\mathcal{X},\mu,S)$ is an ergodic measure preserving system with $h_{\mu} (X,S) = 0$.
\end{cor}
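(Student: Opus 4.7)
The plan is straightforward: exploit the fact that by Proposition \ref{prop:R_mps} the map $Q$ is an invertible measure preserving transformation of $(X,\mathcal{X},\mu)$, so that $S = Q^{-1} \circ T \circ Q$ is metrically conjugate to $T$ via $Q$. Since measurability, the measure-preserving property, ergodicity, and measure-theoretic entropy are all invariants of metric conjugacy, the conclusion will follow immediately from the corresponding properties of $(X,\mathcal{X},\mu,T)$.

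More concretely, I would first verify that $S$ is a measure preserving transformation: for any $A \in \mathcal{X}$,
\[
\mu(S^{-1} A) = \mu(Q^{-1} T^{-1} Q A) = \mu(T^{-1} Q A) = \mu(Q A) = \mu(A),
\]
using that $Q$ and $Q^{-1}$ are measure preserving (Proposition \ref{prop:R_mps}) and that $T$ is measure preserving. Next, to see that $S$ is ergodic, suppose $A \in \mathcal{X}$ satisfies $S^{-1} A = A$ (mod $\mu$). Setting $B := Q A \in \mathcal{X}$, one checks that $T^{-1} B = Q S^{-1} Q^{-1} B = Q S^{-1} A = Q A = B$ (mod $\mu$); since $(X,\mathcal{X},\mu,T)$ is ergodic by Proposition \ref{prop:T_ergodic}, we conclude $\mu(B) \in \{0,1\}$, and as $Q$ is measure preserving, $\mu(A) = \mu(B) \in \{0,1\}$.

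Finally, for entropy, the conjugacy $S = Q^{-1} T Q$ with $Q$ an invertible measure preserving map yields $h_\mu(X,S) = h_\mu(X,T)$, which equals $0$ by Proposition \ref{prop:entropy_T=0}.

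There is no real obstacle here since all the nontrivial work was done in Proposition \ref{prop:R_mps} (verifying that $Q$ is a Borel isomorphism preserving $\mu$), in Proposition \ref{prop:T_ergodic} (ergodicity of $T$), and in Proposition \ref{prop:entropy_T=0} (zero entropy of $T$); the corollary is then simply an assembly of these facts via the invariance of the relevant properties under metric conjugacy.
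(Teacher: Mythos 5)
Your proposal is correct and follows exactly the paper's route: the paper likewise deduces the corollary from Proposition \ref{prop:R_mps}, Proposition \ref{prop:T_ergodic} and Proposition \ref{prop:entropy_T=0} via the conjugacy $S = Q^{-1}\circ T\circ Q$. You have merely spelled out the routine verifications that metric conjugacy preserves the measure-preserving property, ergodicity, and entropy, which the paper leaves implicit.
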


\begin{proof} This follows from \Cref{prop:R_mps}, \Cref{prop:T_ergodic} and \Cref{prop:entropy_T=0}. \end{proof}
\Cref{thm:Main} directly follows from the following Lemma. 

\begin{lemma}\label{lem:divergence} Let $A = D \times [0]_{(0,0)}$, then for the two increasing sequence of natural numbers   $\left(N_k\right)_{k=1}^\infty$ and $\left(M_k\right)_{k=1}^\infty$ (as in the definition of $J$, see \eqref{eq:def_J}), we have
\[
\underset{k\rightarrow \infty}{\mathrm{lim}}\frac{1}{N_k}\sum_{n=0}^{N_k-1} \mu\left(T^{-p_1(n)}A\cap S^{-p_2(n)}A\right) \geq \dfrac{1}{4},
\] and \[
\underset{k\rightarrow \infty}{\mathrm{lim}}\frac{1}{M_k}\sum_{n=0}^{M_k-1} \mu\left(T^{-p_1(n)}A\cap S^{-p_2(n)}A\right) = 0. 
\]
\end{lemma}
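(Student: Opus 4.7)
The plan is to expand both conditions $T^{p_1(n)}(y,\omega) \in A$ and $S^{p_2(n)}(y,\omega) \in A$ using \eqref{eq:T^n}, \eqref{eq:S^n} and the formula \eqref{eq:Psi_pi_y} for $\Psi_{\pi_y}$. Since $\Psi_{\pi_y}$ fixes the $(0,0)$-coordinate and $D$ has full $\lambda$-measure, on a full $\mu$-measure set these two conditions reduce to the coordinate constraints
\[
\omega\bigl(S_{p_1(n)}(f)(y)\bigr)=0 \qquad\text{and}\qquad \Psi_{\pi_y}(\omega)\bigl(S_{p_2(n)}(f)(y)\bigr)=0.
\]
The whole argument will then hinge on identifying which branch of \eqref{eq:Psi_pi_y} applies at the point $S_{p_2(n)}(f)(y)\in\Z^2$.

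The central observation, and the entire point of the definition of $\pi_y$ and $\Psi_{\pi_y}$, is the following trichotomy on $(y,n)$. (i) When $n \in \mathcal{K}_y$, the defining property \eqref{eq:pi_y_maps} gives $\Psi_{\pi_y}(\omega)(S_{p_2(n)}(f)(y))=1-\omega(S_{p_1(n)}(f)(y))$, which directly contradicts the first equation; the integrand vanishes. (ii) When $n \notin \mathcal{K}_y$ and $S_{p_2(n)}(f)(y)$ lies outside $\{(0,0)\}\cup\{S_{p_2(m)}(f)(y):m\in \mathcal{K}_y\}$, the third branch of \eqref{eq:Psi_pi_y} applies, so $\Psi_{\pi_y}(\omega)(S_{p_2(n)}(f)(y))=\omega(\pi_y(S_{p_2(n)}(f)(y)))$; the two constraints involve either two distinct or two identical coordinates of $\omega$, giving a conditional $\nu$-measure of $1/4$ or $1/2$, in either case $\ge 1/4$. (iii) The remaining ``bad'' case is that $S_{p_2(n)}(f)(y)$ equals $(0,0)$ or some $S_{p_2(m)}(f)(y)$ with $m\in\mathcal{K}_y$, $m\neq n$.

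To convert this trichotomy into the two averaged bounds, the key auxiliary estimate I would establish is that for each large $n$ both the bad set in (iii) and the set $\{y\in D: n\notin\mathcal{K}_y\}$ have $\lambda$-measure $O(1/\sqrt n)$. These follow exactly the template already used in \cref{lem:expectation_and_variance_bound_on_range}: by the two-dimensional local central limit theorem (\cref{thm:LCLT}) one has $\lambda(S_{p_j(n)}(f)=S_{p_j(m)}(f)) = \lambda(S_{|p_j(n)-p_j(m)|}(f)=(0,0)) \lesssim 1/|p_j(n)-p_j(m)|$, and \cref{claim:simple_bound} lower-bounds the latter denominator by $\gamma(n+(n-m)^2)$; summing over $m\neq n$ yields the claimed $O(1/\sqrt n)$.

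Combining the pieces, for $n \in J$ with $n$ large, case (i) forces $\mu(T^{-p_1(n)}A\cap S^{-p_2(n)}A) \leq \lambda(\{y: n \notin \mathcal{K}_y\}) = O(1/\sqrt n)$, which, together with the trivial bound $\mu(\cdot)\le 1/2$ for $n\notin J$ and the density fact $|[0,M_k]\setminus J|/M_k\to 0$, gives the second (vanishing) limit. For $n \in [0,N_k)\setminus J$ (hence $n\notin\mathcal{K}_y$) with $n$ large, case (ii) applies outside a set of $\lambda$-measure $O(1/\sqrt n)$, giving $\mu(T^{-p_1(n)}A\cap S^{-p_2(n)}A) \geq 1/4-O(1/\sqrt n)$, and since $|J\cap [0,N_k]|/N_k\to 0$ the Cesàro average is at least $1/4-o(1)$. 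The main obstacle is the careful bookkeeping of the three branches of $\Psi_{\pi_y}$ and tracking that case (i) really is the generic behaviour on $J$ while case (ii) is the generic behaviour off $J$; once this structural analysis is in place, the quantitative control collapses to the LLT/polynomial estimates already developed in \cref{prop:Range_of_cocycle_along_p}.
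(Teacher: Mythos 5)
Your proposal is correct and follows the same basic route as the paper: reduce both events to the two coordinate constraints $\omega\bigl(S_{p_1(n)}(f)(y)\bigr)=0$ and $(\Psi_{\pi_y}\omega)\bigl(S_{p_2(n)}(f)(y)\bigr)=0$, note that for $n\in\mathcal{K}_y$ the second is the negation of the first (so the integrand vanishes, giving the vanishing limit along $M_k$ from the density of $\mathcal{K}_y$ in $J$), while for $n\notin\mathcal{K}_y$ the fibre measure is generically at least $1/4$ (giving the liminf along $N_k$ since $J$ has density $0$ in $[0,N_k]$). The one place you genuinely diverge is your case (iii). The paper asserts that for every $y\in D$ and every $n\notin\mathcal{K}_y$ the fibre measure is at least $1/4$, splitting only on whether $\pi_y\bigl(S_{p_2(n)}(f)(y)\bigr)$ equals $S_{p_1(n)}(f)(y)$; this glosses over the collision sub-case you isolate, namely $S_{p_2(n)}(f)(y)=S_{p_2(m)}(f)(y)$ for some $m\in\mathcal{K}_y$ with $S_{p_1(m)}(f)(y)=S_{p_1(n)}(f)(y)$, where the second branch of \eqref{eq:Psi_pi_y} fires and the fibre measure is $0$. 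Your extra $O(1/\sqrt n)$ bound on the $\lambda$-measure of this bad set (via the two-dimensional LLT and \cref{claim:simple_bound}, exactly as in the proof of \cref{lem:expectation_and_variance_bound_on_range}) repairs this at no cost to either Ces\`aro average. Likewise, your quantitative estimate $\lambda(\{y\in D:\ n\notin\mathcal{K}_y\})=O(1/\sqrt n)$ for $n\in J$ substitutes for the paper's softer appeal to \cref{cor:k_p_y_banach} together with the bounded convergence theorem in the second limit; both routes work, yours being slightly more quantitative and, in the liminf step, more careful than the published argument.
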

\begin{proof} Note that $(y,\omega) \in D \times \Sigma \cap T^{-p_1(n)}A\cap S^{-p_2(n)}A$ if and only if $y \in D$, $T^{p_1(n)} (y,\omega) \in D \times [0]_{(0,0)}$ and $S^{p_2(n)} (y,\omega) \in D \times [0]_{(0,0)}$. For $n \in \N$ we set,
\begin{align*}
B(n) & : = D \times \Sigma \cap T^{-p_1(n)}A\cap S^{-p_2(n)}A \\
& = \{(y,\omega) \in X : y \in D,\,\textrm{and}\, T^{p_1(n)} (y,\omega), S^{p_2(n)} (y,\omega) \in D \times [0]_{(0,0)}\}.
\end{align*} 

Using the definitions of $T^{p_1(n)}$ and $S^{p_2(n)}$ (see \eqref{eq:T^n} and \eqref{eq:S^n}), we get
\[
B(n) = \{(y,\omega) \in X: y \in D, \sigma_{S_{p_1(n)}(f)(y)} (\omega) (0,0) = 0,\,\textrm{and}\, (\Psi^{-1}_{\pi_{y + n \alpha}} \circ \sigma_{S_{p_2(n)}(f)(y)} \circ \Psi_{\pi_y}) (\omega ) (0,0) = 0 \}.
\] Observe that for $(\tilde{y},\tilde{\omega}) \in D \times \Sigma$, we have $(\Psi^{-1}_{\pi_{\tilde{y}}}\tilde{\omega}) (0,0) = \tilde{\omega} (0,0)$, hence
\begin{align}
B(n) &= \{(y,\omega) \in X: y \in D, \sigma_{S_{p_1(n)}(f)(y)} (\omega) (0,0) = 0,\,\textrm{and}\, ( \sigma_{S_{p_2(n)}(f)(y)} \circ \Psi_{\pi_y}) (\omega ) (0,0) = 0 \} \nonumber\\
&= \{(y,\omega) \in X: y \in D,  (\omega) (S_{p_1(n)}(f)(y)) = 0,\,\textrm{and}\, (\Psi_{\pi_y} \omega ) (S_{p_2(n)}(f)(y)) = 0 \}.\label{eq:def_set_B}
\end{align} 
If $n\notin \mathcal{K}_y$, then either $\pi_y \left(S_{p_2(n)}(f)(y)\right)=S_{p_1(n)}(f)(y)$ and 
\[
\Psi_{\pi_y}(S_{p_2(n)}(f)(y))=\omega(S_{p_1(n)}(f)(y))
\]
or $\pi_y \left(S_{p_2(n)}(f)(y)\right)\neq S_{p_1(n)}(f)(y)$ and then $\Psi_{\pi_y}(S_{p_2(n)}(f)(y))$ and $\omega(S_{p_1(n)}(f)(y))$ are independent. By this and the definition of $\Psi_{\pi_y}$ we have for all $y\in D$ and $n\notin \mathcal{K}_y$
\[
\nu\left(\omega\in\Sigma:\  \sigma_{S_{p_1(n)}(f)(y)} (\omega) (0,0) = 0,\,\textrm{and}\, ( \sigma_{S_{p_2(n)}(f)(y)} \circ \Psi_{\pi_y}) (\omega ) (0,0) = 0\right)\geq \frac{1}{4}.
\]
This together with Fubini's Theorem implies that 
\[
\text{for all}\ n\notin \mathcal{K}_y,\ \mu\left(T^{-p_1(n)}A\cap S^{-p_2(n)}A\right)=\mu(B(n))\geq \frac{1}{4}.
\]

Now we calculate,
\begin{align*}
&\frac{1}{N_k}\sum_{n=0}^{N_k-1} \mu\left(T^{-p_1(n)}A\cap S^{-p_2(n)}A\right)\\
&\geq \frac{1}{N_k} \sum_{n=M_{k-1}+1}^{N_k-1} \mu\left(T^{-p_1(n)}A\cap S^{-p_2(n)}A\right).
\end{align*} Note that $J \cap [M_{k-1}, N_{k}-1] = \emptyset$, hence if $n \in [M_{k-1}+1, N_{k}-1]$, then $n \notin \mathcal{K}_y$. Hence it follows that
\[
\underset{k\rightarrow \infty}{\mathrm{lim}}\frac{1}{N_k}\sum_{n=0}^{N_k-1} \mu\left(T^{-p_1(n)}A\cap S^{-p_2(n)}A\right) \geq \underset{k\rightarrow \infty}{\mathrm{lim}} \frac{1}{N_k} \sum_{n=M_{k-1}+1}^{N_k-1} \frac{1}{4} = \frac{1}{4}.
\] Similarly, we calculate
\begin{align*}
    &\frac{1}{M_k}\sum_{n=0}^{M_k-1} \mu\left(T^{-p_1(n)}A\cap S^{-p_2(n)}A\right)\\
    & = \int_{X} \frac{1}{M_k} \sum_{n=0}^{M_k-1} \big(1_A \circ T^{p_1(n)} (y,\omega)\big) \circ \big(1_A \circ S^{p_2(n)} (y,\omega)\big) d\mu\\
    & \leq \dfrac{N_k}{M_k} + \int_{X} \frac{1}{M_k} \sum_{n\, \in \, \mathcal{K}_y \cap [N_k, M_k-1]} \big(1_A \circ T^{p_1(n)}(y,\omega)\big) \circ \big(1_A \circ S^{p_2(n)}) (y,\omega)\big) d\mu\\
    & + \int_{X} \frac{1}{M_k} \sum_{n\,\in\,[N_k, M_k-1] \setminus \mathcal{K}_y} \big(1_A \circ T^{p_1(n)}(y,\omega)\big) \circ \big(1_A \circ S^{p_2(n)}) (y,\omega)\big) d\mu.
\end{align*} Observe that \eqref{eq:Psi_pi_y} implies that for $(y,\omega) \in D \times \Sigma$ and $n\in \mathcal{K}_y$,
\begin{equation*}\label{eq:psi_omega}
    (\Psi_{\pi_y} \omega ) (S_{p_2(n)}(f)(y))=
1- \omega(S_{p_1(n)}(f)(y)).
\end{equation*}
Taking into consideration \eqref{eq:def_set_B} we conclude that for all  $(y,\omega) \in D \times \Sigma$ and $n\in \mathcal{K}_y$, 
\[
\big(1_A \circ T^{p_1(n)}(y,\omega)\big) \circ \big(1_A \circ S^{p_2(n)}) (y,\omega)\big)  = 0. 
\] Hence, as $\frac{N_k}{M_k} \xrightarrow[]{k\to\infty}0$, 
\begin{align*}
    &\underset{k\rightarrow \infty}{\mathrm{lim}}\frac{1}{M_k}\sum_{n=0}^{M_k-1} \mu\left(T^{-p_1(n)}A\cap S^{-p_2(n)}A\right)\\
    &= \underset{k\rightarrow \infty}{\mathrm{lim}} \int_{X} \frac{1}{M_k} \sum_{n\,\in\,[N_k, M_k-1] \setminus \mathcal{K}_y} \big(1_A \circ T^{p_1(n)}(y,\omega)\big) \circ \big(1_A \circ S^{p_2(n)}) (y,\omega)\big) d\mu.
\end{align*} For $k \in \N$, we set
\[
g_k : = \frac{1}{M_k} \sum_{n\,\in\,[N_k, M_k-1] \setminus \mathcal{K}_y} \big(1_A \circ T^{p_1(n)}(y,\omega)\big) \circ \big(1_A \circ S^{p_2(n)}) (y,\omega)\big) . 
\] It follows from the definition of $\mathcal{K}_y$ (see \eqref{eq:def_curly_K_y}), that the map $\T \ni y \mapsto [N_k, M_{k}-1] \cap \mathcal{K}_y$ is measurable. Hence the map, $\T \ni y \mapsto [N_k, M_{k}-1] \setminus \mathcal{K}_y$ is measurable. This implies that for every $k \in \N$, the map $g_k$ is measurable. Also for every $k \in \N$,\[
g_k (y,\omega) \leq \dfrac{\left|[N_k, M_{k}-1] \setminus \mathcal{K}_y \right|}{M_k} \leq 1. 
\] From \Cref{cor:k_p_y_banach} and the definition of $\mathcal{K}_y$, it follows that for $y \in D$,
\[
\underset{k\rightarrow \infty}{\mathrm{lim}} \dfrac{\left|[N_k, M_{k}-1] \cap \mathcal{K}_y \right|}{M_k} = 1
\] This implies that $g_k \xrightarrow[]{k\to\infty}0$. Thus by using Bounded Convergence Theorem,
\[
\underset{k\rightarrow \infty}{\mathrm{lim}}\frac{1}{M_k}\sum_{n=0}^{M_k-1} \mu\left(T^{-p_1(n)}A\cap S^{-p_2(n)}A\right) = \underset{k\rightarrow \infty}{\mathrm{lim}} \int_{X} g_k = 0.
\]
\end{proof}

\subsection{Proof of Theorem \ref{thm:recurrence counterexample}}
\label{sub: recurrence proof}
The proof of \cref{thm:recurrence counterexample} is by modifying the construction of $S$. Let $f:\T\to\Z^2$ be a function as in Subsection \ref{sec:construction_T} and for an integer polynomial  $p$, write  
\[
\CC_p:=\left\{y\in\T:\ \forall n\neq j,\ n,j\in \N, S_{p(n)}(f)(y)\neq S_{p(j)}(f)(y),\ \text{and}\ S_{p(n)}(f)(y)\neq (0,0)\right\}.
\]
\begin{prop}\label{prop: no returens along special polynomials}
There exists $M>0$, such that if $p(n)=Ln^d$ with $\N\ni L>M$ and $d\geq 3$,
then $\lambda\left(\CC_p\right)>0$. 
\end{prop}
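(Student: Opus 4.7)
The plan is to show that $\lambda(\T\setminus \CC_p)$ can be made strictly less than $1$ by taking $L$ large enough, via a union bound controlled by the $2$-dimensional local central limit theorem.

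First I would record a uniform pointwise estimate: by \cref{thm:LCLT} applied with $d=2$, there exists $C>0$ such that for every $m\in\N$ and every $x\in\Z^2$,
\[
\lambda\bigl(S_m(f)=x\bigr)\;\leq\;\frac{C}{m}.
\]
For large $m$ this follows from the LCLT, and the finitely many small values of $m$ are absorbed into $C$. Using the $R_\alpha$-invariance of $\lambda$, this gives for any $j<n$,
\[
\lambda\bigl(S_{p(n)}(f)=S_{p(j)}(f)\bigr)=\lambda\bigl(S_{p(n)-p(j)}(f)\circ R_\alpha^{p(j)}=(0,0)\bigr)\leq \frac{C}{p(n)-p(j)},
\]
and $\lambda(S_{p(n)}(f)=(0,0))\leq C/p(n)$.

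Next I would write
\[
\T\setminus \CC_p\subset \bigcup_{n\geq 1}\{S_{p(n)}(f)=(0,0)\}\;\cup\;\bigcup_{1\leq j<n}\{S_{p(n)}(f)=S_{p(j)}(f)\}
\]
and apply the union bound. With $p(n)=Ln^d$ the first contribution is
\[
\sum_{n\geq 1}\frac{C}{Ln^d}\;=\;\frac{C\,\zeta(d)}{L},
\]
which is finite since $d\geq 3$. For the second, I use the algebraic identity $n^d-j^d=(n-j)\sum_{\ell=0}^{d-1}n^{d-1-\ell}j^\ell\geq (n-j)n^{d-1}$ for $n>j\geq 1$, so
\[
\sum_{1\leq j<n}\frac{C}{L(n^d-j^d)}\;\leq\;\frac{C}{L}\sum_{n\geq 2}\frac{1}{n^{d-1}}\sum_{k=1}^{n-1}\frac{1}{k}\;\leq\;\frac{C}{L}\sum_{n\geq 2}\frac{\ln n+1}{n^{d-1}}.
\]
The last series converges precisely because $d-1\geq 2$, yielding a finite constant $C'$ independent of $L$.

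Combining these two bounds, $\lambda(\T\setminus \CC_p)\leq C''/L$ for a constant $C''$ depending only on $d$ (and on the LCLT constant $C$). Choosing $M:=\lceil 2C''\rceil$ then forces $\lambda(\CC_p)\geq 1/2>0$ for every $L>M$ and every $d\geq 3$, which is the claim. The only real subtlety is the uniform-in-$x$ pointwise bound $\lambda(S_m(f)=x)\leq C/m$; the convergence of both sums is then forced by the hypothesis $d\geq 3$, which is exactly the threshold at which $\sum_n (\log n)/n^{d-1}$ becomes summable, and explains why \cref{thm:recurrence counterexample} is stated only for $d\geq 3$.
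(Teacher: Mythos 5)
Your proposal is correct and follows essentially the same route as the paper: a union bound over pairs, the LCLT-derived estimate $\lambda(S_{p(n)}(f)=S_{p(j)}(f))\leq C/(p(n)-p(j))$, an elementary lower bound on $n^d-j^d$ making the double sum converge for $d\geq 3$, and then taking $L$ large to push the total below $1$. The only (cosmetic) difference is the algebraic bound used: you take $n^d-j^d\geq (n-j)n^{d-1}$ and sum $(\ln n)/n^{d-1}$, whereas the paper uses $n^d-k^d\geq n^2+(n-k)^3$ and sums $1/(n^2+t^3)$; both are valid exactly in the range $d\geq 3$.
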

We begin with the following variant of \cref{claim:simple_bound}. 
\begin{claim}\label{claim:simple_bound2}
For all $d\geq 3$ and $n,k\in \N$ such that $n>k$,
\[
n^d-k^d\geq n^2+(n-k)^3.
\]
\end{claim}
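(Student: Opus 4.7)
The plan is to reduce the general $d \geq 3$ case to $d=3$, and then to settle $d=3$ by a direct algebraic identity.

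First, I would record the trivial identity
\[
n^d - k^d - n^{d-3}\bigl(n^3 - k^3\bigr) = k^3\bigl(n^{d-3} - k^{d-3}\bigr),
\]
whose right-hand side is nonnegative since $n \geq k$. This yields $n^d - k^d \geq n^{d-3}(n^3 - k^3)$, so it suffices to control $n^3 - k^3$ and then amplify by the factor $n^{d-3}$.

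For the core $d=3$ bound I would expand
\[
n^3 - k^3 = (n-k)^3 + 3nk(n-k),
\]
so the desired inequality at $d=3$ reduces to showing $3nk(n-k) \geq n^2$. Over integers with $1 \leq k \leq n-1$, the quantity $k(n-k)$ attains its minimum at the endpoints, namely $n-1$. Hence $3nk(n-k) \geq 3n(n-1) \geq n^2$ whenever $n \geq 2$, which is automatic from $n > k \geq 1$.

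To finish, since $d \geq 3$ and $n \geq 1$ we have $n^{d-3} \geq 1$ and $n^{d-1} \geq n^2$, so combining the two steps,
\[
n^d - k^d \;\geq\; n^{d-3}\bigl(n^2 + (n-k)^3\bigr) \;=\; n^{d-1} + n^{d-3}(n-k)^3 \;\geq\; n^2 + (n-k)^3,
\]
as desired. The argument is purely algebraic, so there is no real obstacle; the only point requiring a sentence of care is the use of $k \geq 1$ (i.e., the convention $\N = \{1, 2, \ldots\}$), which underlies the minimum estimate $k(n-k) \geq n-1$ at the $d=3$ step.
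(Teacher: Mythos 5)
Your proof is correct and follows essentially the same route as the paper: reduce to $d=3$ (the paper uses $n^d-k^d\geq n^3-k^3$ directly, you use the factor $n^{d-3}$, but both reductions are immediate), then apply the identity $n^3-k^3=(n-k)^3+3nk(n-k)$ together with the bound $3k(n-k)\geq n$ for $1\leq k<n$.
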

\begin{proof}
Since for all $d\geq 3$ and integers $k<n$, 
\[
n^d-k^d\geq n^3-k^3,
\]
it remains to prove the bound for $d=3$. The bound now follows from,
\begin{align*}
 n^3-k^3&=\left(n-k\right)\left(n^2+kn+k^2\right)\\
 &=\left(n-k\right)\left((n-k)^2+3kn\right)\ \ \ \ \text{since}\ 3k(n-k)>n\\
&\geq (n-k)^3+n^2.
\end{align*} \end{proof}

\begin{proof}[Proof of \cref{prop: no returens along special polynomials}] Let $p(n)=Ln^d$ with $L\in\N$ and $d\geq 3$. For $n, j \in \N$, $n\neq j$ define, 
\[
\CC_p (n,j) = \{y \in \T: n,j \in \N , S_{p(n)}(f)(y)\neq S_{p(j)}(f)(y)\}
\] and
\[
\CC_p(n,0) = \{ y \in \T : S_{p(n)}(f)(y) = (0,0)\}.
\] Observe that,
\[
\CC_p = \T \setminus \left(\bigcup_{n=1}^{\infty} \CC_p(n,0) \cup \bigcup_{n=1}^{\infty} \bigcup_{m=n+1}^{\infty} \CC_p(n,m)\right).
\] As in the proof of \cref{lem:expectation_and_variance_bound_on_range}, using  the local central limit theorem (\cref{thm:LCLT}) for $d=2$, there exists $\beta>0$, such that for every $n, m \in \N$, $m > n$, we have
\begin{align*}
\lambda(C_p(n,m)) = \lambda\left(S_{p(m)}(f)-S_{p(n)}(f)=(0,0)\right)&=\lambda\left(S_{p(m)-p(n)}(f)\circ R_\alpha^{p(n)}=(0,0)\right)\\
&=\lambda\left(S_{p(m)-p(n)}(f)=(0,0)\right)\leq \frac{\beta}{p(m)-p(n)}.
\end{align*} By \cref{claim:simple_bound2}, we have 
\[
p(m)-p(n) = L (m^d - n^d) > L (m^2 + (m-n)^3). 
\] Thus,
\begin{align*}
\lambda \left(\bigcup_{n=1}^{\infty} \bigcup_{m=n+1}^{\infty} \CC_p(n,m) \right) \leq \sum_{n=1}^{\infty} \sum_{m=n+1}^{\infty} \lambda \left(\CC_p(n,m) \right) &< \sum_{n=1}^{\infty} \sum_{m=n+1}^{\infty}\frac{\beta}{L (m^2 + (m-n)^3)}\\
&< \dfrac{\beta}{L} \sum_{n=1}^{\infty} \sum_{t=1}^{\infty} \dfrac{1}{n^2 + t^3}.
\end{align*} Since $\displaystyle \sum_{n=1}^{\infty} \sum_{t=1}^{\infty} \dfrac{1}{n^2 + t^3}$ is finite, we see that for all large enough $L\in\N$, 
\[
\lambda \left(\bigcup_{n=1}^{\infty} \bigcup_{m=n+1}^{\infty} \CC_p(n,m) \right)<\frac{1}{2}.
\]Similarly, as $\sum_{n=1}^\infty \frac{1}{p(n)}$ is finite, a similar argument shows that for all large enough $L \in \N$, we have $\lambda \left( \bigcup_{n=1}^{\infty} \CC_p(n,0) \right) < 1/2$. Hence, $\lambda(\CC_p)>0$ as needed. 
\end{proof}

Recall that $X = \T \times \Sigma$ where $\T = \R /\Z$ and $\Sigma = \{0,1\}^{\Z^2}$. For $y\in \CC_p$, we set $\Psi_y: \Sigma \rightarrow \Sigma$ to be the map
\begin{equation}\label{eq:Psi_y}
    \Psi_y (\omega)(i,j):=\begin{cases}
1-\omega(S_{p(n)} (f)(y)), &\ \text{for}\, (i,j) = S_{p(n)} (f)(y),\,\, \forall n \in \N \\
\omega(i,j), &\  \text{otherwise}.
\end{cases}
\end{equation} Let $V: X \rightarrow X$ be given by,
\begin{equation}\label{eq:def_R_tilde}
   V(y,\omega) :=\begin{cases}
(y, \Psi_y (\omega) ), &\ \text{for}\,\, y \in \CC_p;\\
(y,\omega), &\ \text{for}\,\, y \in \T\setminus \CC_p.
\end{cases}
\end{equation} We finally define, $S : X \rightarrow X$ as, 
\begin{equation}\label{def:S_tilde}
   S : = V^{-1} \circ T \circ V.
\end{equation} where $T : X \rightarrow X$ is the skew product of $R_\alpha$ and $f$ as defined in Subsection \ref{sec:construction_T}. Observe that for $n \in \N$, $y \in \CC_p \cap R_{\alpha}^{-n} \CC_p$ and $\omega \in \Sigma$, we have
\begin{equation}
    S^n(y, \omega) = (y+n\alpha, (\Psi^{-1}_{y + n \alpha} \circ \sigma_{S_n(f)(y)} \circ \Psi_y) (\omega )).
\end{equation}
It follows from an argument similar to \cref{prop:R_mps} that $(X,\mathcal{X}, \mu, V)$ is an invertible measure preserving system. Hence it follows that $(X,\mathcal{X},\mu,S)$ is isomorphic to $T$, consequently $S$ is an ergodic measure preserving system with $h_{\mu} (X,S) = 0$. 

\begin{proof}[Proof of \cref{thm:recurrence counterexample}] Let $M>0$ as in \cref{prop: no returens along special polynomials} and $p$ a polynomial of the form $p(n)=Ln^d$ with $L>M$ and $d\geq 3$. Define $A = \CC_p \times [0]_{(0,0)}$ and note that by \cref{prop: no returens along special polynomials},
\[
\mu(A)=\frac{1}{2}\lambda\left(\CC_p\right)>0.
\]We claim that for all $n\in\N$,
\[
\mu\left(A \cap T^{-p(n)}A\cap S^{-p(n)}A\right)=0.
\]
Indeed, if $(y,\omega)\in A \cap T^{-p(n)}A\cap S^{-p(n)}A$, then $y,y+n\alpha\in\CC_p$ and 
\[
\omega\left(S_{p(n)}(f)(y)\right)=\Psi^{-1}_{y + n \alpha} \circ \sigma_{S_{p(n)}(f)(y)} \circ \Psi_y (\omega )(0,0).
\] 
For all $y\in\CC_p$, $(0,0)\notin \left\{S_{p(n)}(f)(y)\right\}_{n=1}^\infty$. Consequently, for all $y\in\T$, such that $y,y+n\alpha\in\CC_p$,
\begin{align*}
\Psi^{-1}_{y + n \alpha} \circ \sigma_{S_{p(n)}(f)(y)} \circ \Psi_y (\omega )(0,0)&=\Psi^{-1}_{y + n \alpha}\left(\sigma_{S_{p(n)}(f)(y)} \circ \Psi_y (\omega )\right)(0,0)\\
&=\sigma_{S_{p(n)}(f)(y)} \circ \Psi_y (\omega )(0,0)\\
&=\Psi_y (\omega )\left(S_{p(n)}(f)(y)\right)\\
&=1-\omega\left(S_{p(n)}(f)(y)\right)\neq \omega\left(S_{p(n)}(f)(y)\right).
\end{align*}
This implies that for all $n\in\N$, 
\[
\mu\left(A \cap T^{-p(n)}A\cap S^{-p(n)}A\right)=\mu\left(\emptyset\right)=0.
\]
\end{proof}

\appendix
\section{}\label{App_1} The following is a multi-dimensional version of \cite[Proposition 18]{MR4374685}. Its proof is similar to the $1$-dimensional case. Recall that for $x\in\R^D$, $\|x\|$ denotes the Euclidean norm of $x$. 

\begin{prop}\label{prop:bla}
Suppose that for each $n\in\N$, $X_n=Y_n+Z_n$ where $Y_n,Z_n$ are independent $\Z^D$ valued random variables and $\mathbb{E}\left(\left\|Z_n\right\|^2\right)=O\left(\frac{n}{\sqrt{\log(n)}}\right)$ and $\sigma>0$. If 
\begin{equation}\label{eq:LLT_for_Y}
\sup_{x\in \Z^D}\left|n^{D/2}\mathbb{P}\left(Y_n=x\right)-\frac{e^{-\frac{\|x\|^2}{2n}}}{\left(2\pi \sigma^2\right)^{D/2}}\right|=o(1)
\end{equation}
then 
\[
\sup_{x\in \Z^D}\left|n^{D/2}\mathbb{P}\left(X_n=x\right)-\frac{e^{-\frac{\|x\|^2}{2n}}}{\left(2\pi \sigma^2\right)^{D/2}}\right|=o(1)
\]
\end{prop}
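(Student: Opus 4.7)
The plan is to treat the random variable $X_n$ as the convolution of $Y_n$ and $Z_n$ (via their independence), then combine the pointwise local CLT for $Y_n$ with a Lipschitz-type continuity estimate on the Gaussian density to absorb the displacement by $Z_n$. Concretely, by independence,
\[
\mathbb{P}(X_n = x) \;=\; \sum_{z\in\mathbb{Z}^D} \mathbb{P}(Y_n = x-z)\,\mathbb{P}(Z_n = z), \qquad x\in\mathbb{Z}^D.
\]
Writing $g_n(y) := (2\pi\sigma^2)^{-D/2}\exp\!\bigl(-\|y\|^2/(2n\sigma^2)\bigr)$ and $\epsilon_n := \sup_{y\in\mathbb{Z}^D}\bigl|n^{D/2}\mathbb{P}(Y_n = y) - g_n(y)\bigr|$, the hypothesis \eqref{eq:LLT_for_Y} gives $\epsilon_n = o(1)$.

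Multiplying the convolution identity by $n^{D/2}$ and inserting the LCLT approximation yields
\[
n^{D/2}\mathbb{P}(X_n = x) \;=\; \sum_{z}g_n(x-z)\,\mathbb{P}(Z_n = z) \;+\; R_n(x),
\]
where $|R_n(x)| \le \epsilon_n\sum_z \mathbb{P}(Z_n = z) = \epsilon_n = o(1)$ uniformly in $x$. It therefore suffices to control
\[
\Delta_n(x) \;:=\; \sum_{z\in\mathbb{Z}^D} g_n(x-z)\,\mathbb{P}(Z_n = z) \;-\; g_n(x),
\]
uniformly in $x\in\mathbb{Z}^D$.

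For this I would use that $g_n$ is Lipschitz with a constant of order $n^{-1/2}$. Indeed, $\nabla g_n(y) = -(n\sigma^2)^{-1} y\, g_n(y)$, so $\|\nabla g_n(y)\|$ is maximized where $\|y\| \asymp \sqrt{n}$ and thus $\sup_{y\in\mathbb{R}^D}\|\nabla g_n(y)\| \le C n^{-1/2}$ for a constant $C = C(D,\sigma)$. Consequently $|g_n(x-z) - g_n(x)| \le C n^{-1/2}\|z\|$, and
\[
|\Delta_n(x)| \;\le\; \frac{C}{\sqrt{n}}\sum_{z}\|z\|\,\mathbb{P}(Z_n = z) \;=\; \frac{C}{\sqrt{n}}\,\mathbb{E}\|Z_n\|.
\]
By Cauchy--Schwarz and the assumed variance bound, $\mathbb{E}\|Z_n\| \le \sqrt{\mathbb{E}\|Z_n\|^2} = O\bigl(\sqrt{n}\,(\log n)^{-1/4}\bigr)$, so $|\Delta_n(x)| = O\bigl((\log n)^{-1/4}\bigr) = o(1)$ uniformly in $x$. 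Combining this with the bound on $R_n$ gives the claim.

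There is no real obstacle: the argument is routine convolution smoothing. The only point requiring a little care is making sure the Lipschitz bound on $g_n$ is genuinely $O(n^{-1/2})$, uniformly in $D$, which follows from the elementary estimate $\|y\|e^{-\|y\|^2/(2n\sigma^2)} \le C\sqrt{n}$; everything else is bookkeeping.
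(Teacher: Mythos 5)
Your argument is correct, and it takes a genuinely different and more economical route than the paper. The paper also starts from the convolution identity $\mathbb{P}(X_n=x)=\sum_z\mathbb{P}(Y_n=x-z)\mathbb{P}(Z_n=z)$, but then proceeds by a case analysis: it truncates at $\|z\|\le a(n)=\sqrt{n}/\sqrt[8]{\log n}$, controls the tail $\|z\|>a(n)$ via Markov's inequality together with the uniform bound $\mathbb{P}(Y_n=\cdot)\le \alpha n^{-D/2}$, and then treats $\|x\|>\sqrt{n}\sqrt[9]{\log n}$ and $\|x\|\le\sqrt{n}\sqrt[9]{\log n}$ separately, in the latter regime estimating the \emph{ratio} $e^{-\|x-z\|^2/2n\sigma^2}/e^{-\|x\|^2/2n\sigma^2}=e^{\pm 2(\log n)^{-1/72}/\sigma^2}$. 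You replace all of this by the single observation that the rescaled Gaussian $g_n$ is globally Lipschitz with constant $O(n^{-1/2})$ (since $\|y\|e^{-\|y\|^2/2n\sigma^2}\le C\sqrt{n}$), so that the convolution against the law of $Z_n$ moves $g_n$ by at most $Cn^{-1/2}\mathbb{E}\|Z_n\|=O((\log n)^{-1/4})$ after Cauchy--Schwarz; this additive estimate is exactly what the $o(1)$ conclusion requires, and you even get a rate $\epsilon_n+O((\log n)^{-1/4})$. The only cosmetic discrepancy is that you wrote the Gaussian with $2n\sigma^2$ in the exponent while the statement as printed has $2n$; since the same expression appears in both hypothesis and conclusion this is evidently a typo in the statement (the correct normalization, used elsewhere in the paper, is $e^{-\|x\|^2/2n\sigma^2}/(2\pi\sigma^2)^{D/2}$), and your argument works verbatim with either convention.
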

\begin{proof}
Let $a(n):=\frac{\sqrt{n}}{\sqrt[8]{\log(n)}}$ and $C:=\sup_{n\in\N}\frac{\sqrt{\log(n)}}{n}\mathbb{E}\left(\left\|Z_n\right\|^2\right)<\infty.$ By Markov's inequality, for all $n\in\N$
\begin{equation*}
\mathbb{P}\left(\left\|Z_n\right\|>a(n)\right)\leq \frac{C}{\sqrt[4]{\log(n)}}.
\end{equation*}
Let $x\in\Z^D$. As $Y_n$ and $Z_n$ are independent we have
\[
\mathbb{P}\left(X_n=x\right)=\sum_{z\in\Z^D}\mathbb{P}\left(Y_n=x-z\right)\mathbb{P}\left(Z_n=z\right).
\]
We split the sum into $\|z\|>a(n)$ and $\|z\|\leq a(n)$. 

By \eqref{eq:LLT_for_Y}, there exists $\alpha>0$ such that for all $z\in \mathbb{Z}^D$, 
\[
\mathbb{P}\left(Y_n=x-z\right)\leq \frac{\alpha}{n^{D/2}}.
\]
We deduce that
\begin{align}\label{eq:case_z_geq_a_n}
\sum_{\|z\|>a(n)}\mathbb{P}\left(Y_n=x-z\right)\mathbb{P}\left(Z_n=z\right)&\leq \sum_{\|z\|>a(n)}\frac{\alpha}{n^{D/2}}\mathbb{P}\left(Z_n=z\right) \nonumber \\
&=\frac{\alpha}{n^{D/2}}\mathbb{P}\left(\left\|Z_n\right\|>a(n)\right)\leq \frac{C\alpha}{n^{D/2}\sqrt[4]{\log(n)}}.
\end{align}
We now turn to look at the sum when $\|z\|\leq a(n)$. Firstly if $\|x\|>\sqrt{n}\sqrt[9]{\log(n)}$, then for all $n>3$, $a(n)<\frac{\|x\|}{2}$. Consequently  for all $z\in\Z^d$ with $\|z\|\leq a(n)$,
\[
\|x-z\|\geq \|x\|-\|z\|>\frac{1}{2}\sqrt{n}\sqrt[9]{\log(n)}. 
\]
This and \eqref{eq:LLT_for_Y} imply that for all $n>3$ and uniformly on $\|z\|\leq a(n)$, 
\begin{align*}
\mathbb{P}\left(Y_n=x-z\right)&=\frac{e^{-\frac{\|x-z\|^2}{2n\sigma^2}}}{\left(2\pi n\sigma^2\right)^{D/2}}+o\left(\frac{1}{n^{D/2}}\right)\\
&\leq \frac{e^{-\frac{\left(\log(n)\right)^{2/9}}{4\sigma^2}}}{\left(2\pi n\sigma^2\right)^{D/2}}+o\left(\frac{1}{n^{D/2}}\right)=o\left(\frac{1}{n^{D/2}}\right).
\end{align*}
We conclude that for such $x$, 
\[
\sum_{\|z\|\leq a(n)}\mathbb{P}\left(Y_n=x-z\right)\mathbb{P}\left(Z_n=z\right)=o\left(\frac{1}{n^{D/2}}\right). 
\]
Taking in mind that for all $x$ with $\|x\|\geq \sqrt{n}\sqrt[9]{\log(n)}$, 
\[
e^{-\frac{\|x\|^2}{2n\sigma^2}}\leq e^{-\frac{(\log(n))^{2/9}}{2n\sigma^2}}=o\left(1\right),
\]
we have shown that
\begin{equation}\label{eq:LCLT_X_for_large_x}
\sup_{\|x\|>\sqrt{n}\sqrt[9]{\log(n)}}\left|n^{D/2}\mathbb{P}\left(X_n=x\right)-\frac{e^{-\frac{\|x\|^2}{2n}}}{\left(2\pi \sigma^2\right)^{D/2}}\right|=o(1).    
\end{equation}
When $\|x\|\leq \sqrt{n}\sqrt[9]{\log(n)}$, then for all $z$ with $\|z\|\leq a(n)$, we have 
\begin{align*}
\left|\|x-z\|^2-\|x\|^2\right|&\leq 2|\langle x,z\rangle|+\|z\|^2\\
&\leq \|x\|\|z\|+\|z\|^2\\
&\leq 2\left(\sqrt{n}\sqrt[9]{\log(n)}\right)a(n)= \frac{2n}{(\log(n))^{1/72}}.
\end{align*}
Consequently, for all $z$ with $\|z\|\leq a(n)$,
\begin{align*}
\mathbb{P}\left(Y_n=x-z\right)&=\frac{e^{-\frac{\|x-z\|^2}{2n\sigma^2}}}{\left(2\pi n\sigma^2\right)^{D/2}}+o\left(\frac{1}{n^{D/2}}\right)\\
&= e^{\pm\frac{2}{(\log(n))^{1/72}\sigma^2}}\frac{e^{-\frac{\|x\|^2}{2n\sigma^2}}}{\left(2\pi n\sigma^2\right)^{D/2}}+o\left(\frac{1}{n^{D/2}}\right)\\
&=\frac{e^{-\frac{\|x\|^2}{2n\sigma^2}}}{\left(2\pi n\sigma^2\right)^{D/2}}\left(1+o\left(\frac{1}{n^{D/2}}\right)\right).
\end{align*}
Here the term $1+o\left(n^{-D/2}\right)$ is uniform over all $x,z$ with $\|x\|\leq \sqrt{n}\sqrt[9]{\log(n)}$ and $\|z\|\leq a(n)$.  It follows that for such $x$, 
\begin{align*}
\sum_{\|z\|\leq a(n)}\mathbb{P}\left(Y_n=x-z\right)\mathbb{P}\left(Z_n=z\right)&=\frac{e^{-\frac{\|x\|^2}{2n\sigma^2}}}{\left(2\pi n\sigma^2\right)^{D/2}}\left(1+o\left(\frac{1}{n^{D/2}}\right)\right)\mathbb{P}\left(\|Z_n\|\leq a(n)\right)\\
&=\frac{e^{-\frac{\|x\|^2}{2n\sigma^2}}}{\left(2\pi n\sigma^2\right)^{D/2}}\left(1+o\left(\frac{1}{n^{D/2}}\right)\right).
\end{align*}
We conclude that 
\begin{equation}\label{eq:LCLT_X_for_small_x}
\sup_{\|x\|\leq \sqrt{n}\sqrt[9]{\log(n)}}\left|n^{D/2}\mathbb{P}\left(X_n=x\right)-\frac{e^{-\frac{\|x\|^2}{2n}}}{\left(2\pi n\sigma^2\right)^{D/2}}\right|=o(1).    
\end{equation}
The claim now follows from \eqref{eq:case_z_geq_a_n}, \eqref{eq:LCLT_X_for_large_x} and \eqref{eq:LCLT_X_for_small_x}. 
\end{proof} 

\section{Proof of Theorem \ref{thm:LCLT_for_U}}\label{Appendix B}

Recall that writing $I_n:=\left\{k\in2\N:\ p_{k}<n<d_k\right\}$ and $\mathsf{U}_n:=\sum_{k\in I_n}V_k$. For $n\in\N$, let
\[
\phi_n(t):=\mathbb{E}_m\left(\exp\left(it\mathsf{U}_n\right)\right)
\]In \cite{MR4374685} the following is proved. 

\begin{thm}\label{thm:steps_in_LCLT_for U_n}
\begin{enumerate}[label=(\alph*)]
\item\cite[Corollary 12]{MR4374685}\label{item_a:U_n_converge} $\frac{1}{\sqrt{n}}\mathsf{U}_n$ converges in distribution to a centered  normal random variable with variance\footnote{There is a typo in the definition of $\sigma^2$ in \cite[Corollary 12]{MR4374685}} $\sigma^2=2\left(\ln(2)\right)^2$. 
\item\cite[Lemma 14]{MR4374685}\label{item_b:phi_n_estimate} There exists $c>0$ such that for all $\sqrt[4]{n}\leq x\leq \pi\sqrt{n}$,
\[
\left|\phi_n\left(\frac{x}{\sqrt{n}}\right)\right|\leq \exp\left(-x\sqrt[4]{n}\right)\leq \exp\left(-d\sqrt{|x|}\right). 
\]
where $d=\frac{c}{\sqrt{\pi}}$. 
\item\cite[Lemma 15]{MR4374685}\label{item_c:phi_n_estimate} There exists $N\in\N$ and $L>0$ such that for all $n>N$ and $|x|\leq \sqrt[4]{n}$,
\[
\left|\phi_n\left(\frac{x}{\sqrt{n}}\right)\right|\leq \exp\left(-Lx^2\right). 
\]
\end{enumerate}
\end{thm}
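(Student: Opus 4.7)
The plan is to exploit two independence properties hard-wired into the construction of the $\bar f_k$'s and then reduce each of parts (a), (b), (c) to a concrete characteristic-function estimate. Writing
\[
V_k=\sum_{i=2^k}^{n-1}\bigl[p_k\bigl(U^i\bar f_k-U^{d_k+i}\bar f_k\bigr)+p_{k+1}\bigl(U^i\bar f_{k+1}-U^{d_{k+1}+i}\bar f_{k+1}\bigr)\bigr],
\]
the $4(n-2^k)$ occurrences of $\bar f_k\circ T^j$ and $\bar f_{k+1}\circ T^j$ appear at pairwise distinct shifts in $[0,2d_{k+1}+p_{k+1}]$, so by property (b') of \Cref{subsec:llt dim1} they are mutually independent within $V_k$; property (c') then makes the blocks $\{V_k\}_{k\in I_n}$ mutually independent. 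Setting $\eta_\ell(s):=1-\alpha_\ell^2(1-\cos s)$, the characteristic function factors as
\[
\phi_n(t)=\prod_{k\in I_n}\eta_k(p_k t)^{2(n-2^k)}\,\eta_{k+1}(p_{k+1}t)^{2(n-2^k)}.
\]

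For part (a), I plan to apply L\'evy continuity. Using $p_\ell^2\alpha_\ell^2=1/(\ell\log\ell)$, one computes $\mathrm{Var}(V_k)=2(n-2^k)\bigl[1/(k\log k)+1/((k+1)\log(k+1))\bigr]$ and, summing, $\mathrm{Var}(\mathsf{U}_n)\sim 2n\sum_{\sqrt{\log n}<\ell<\log n}1/(\ell\log\ell)$; the dyadic integral $\int_{\sqrt{\log n}}^{\log n}d\ell/(\ell\ln\ell)$ equals $\ln 2$, giving $\mathrm{Var}(\mathsf{U}_n)\sim n\sigma^2$ with $\sigma^2=2(\ln 2)^2$. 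To upgrade to the CLT, expand
\[
\log\phi_n(s/\sqrt n)=\sum_{k\in I_n}2(n-2^k)\bigl[\log\eta_k(p_k s/\sqrt n)+\log\eta_{k+1}(p_{k+1}s/\sqrt n)\bigr]
\]
and split at $k^*:=\log(\sqrt n/|s|)$. For $\ell\leq k^*$, the Taylor expansion $\log\eta_\ell(u)=-\alpha_\ell^2 u^2/2+O(\alpha_\ell^2 u^4)+O(\alpha_\ell^4)$ produces the leading term $-s^2\sum 1/(\ell\log\ell)\to -\sigma^2 s^2/2$. For $\ell>k^*$, combine $|\log\eta_\ell|=O(\alpha_\ell^2)$ with $(n-2^k)\alpha_\ell^2\leq s^2/(\ell\log\ell)$ (using $p_\ell>\sqrt n/|s|$) to bound the tail by $O(s^2/\log\log n)=o(1)$. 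Hence $\phi_n(s/\sqrt n)\to e^{-\sigma^2 s^2/2}$, giving (a).

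For parts (b) and (c), use the elementary bound $\eta_\ell(s)\leq\exp(-\alpha_\ell^2(1-\cos s))=\exp(-2\alpha_\ell^2\sin^2(s/2))$ to obtain
\[
|\phi_n(t)|\leq\exp\!\Bigl(-4\sum_{k\in I_n}(n-2^k)\bigl[\alpha_k^2\sin^2(p_k t/2)+\alpha_{k+1}^2\sin^2(p_{k+1}t/2)\bigr]\Bigr).
\]
For (c), with $t=x/\sqrt n$ and $|x|\leq n^{1/4}$, restrict the sum to $k\in I_n$ with $p_k t\leq\pi$, where $\sin^2(p_k t/2)\geq(p_k t/\pi)^2$. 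Substituting and using $(n-2^k)p_\ell^2\alpha_\ell^2\geq cn/(\ell\log\ell)$ for $2^k\leq n/2$ bounds the exponent below by $ct^2\cdot 2n\sum 1/(\ell\log\ell)\geq c'x^2$. The constant $c'$ is uniform over $|x|\leq n^{1/4}$ because the cutoff $\log(\pi\sqrt n/|x|)\geq\tfrac{1}{4}\log n+O(1)$ keeps the summation range within a constant factor of the full window $[\sqrt{\log n},\log n]$; this gives (c) with $L=c'$.

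For (b), the range $n^{1/4}\leq x\leq\pi\sqrt n$ is more delicate because $p_k t$ may leave $[0,\pi]$ and the Taylor lower bound for $\sin^2$ fails. The plan is to focus on a window of indices $k\in[k_0,k_0+\Lambda]\cap I_n$ with $\Lambda\asymp\log n$ and $p_{k_0}t\asymp 1$, and exploit a deterministic dwell-time bound for the doubling map on $\R/2\pi\Z$: any dwell of the orbit $\{2^k t\bmod 2\pi\}$ within distance $2^{-K}$ of $2\pi\Z$ lasts at most $\asymp K$ iterates, so a positive proportion of window indices satisfy $\sin^2(p_k t/2)\geq c$; the interleaving $p_{k+1}=2p_k+1$ fills the scales missed by the even $k$'s. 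For such $k$ one has $(n-2^k)\alpha_k^2\geq c\cdot x^2/(k\log k)$, and summing over the good proportion yields exponent $\gtrsim x\sqrt[4]{n}$. The second inequality $\exp(-x\sqrt[4]{n})\leq\exp(-d\sqrt{|x|})$ is immediate: for $|x|\geq n^{1/4}$ one has $\sqrt{|x|}\sqrt[4]{n}\geq n^{3/8}\geq d$ for all large $n$. The main obstacle is the deterministic dwell-time estimate for the doubling map that must be quantified uniformly in $t\in[n^{-1/4},\pi]$ across a window of $\asymp\log n$ indices; this is the principal technical content of the argument in \cite{MR4374685} and is where the careful bookkeeping between the $p_k$ and $p_{k+1}$ scales is essential.
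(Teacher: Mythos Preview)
The paper does not prove this theorem: all three parts are direct quotations from \cite{MR4374685}, so within the present paper there is no argument to compare against. Your sketch is an attempt to reconstruct those proofs, and for parts (a) and (c) it does so correctly. The factorisation of $\phi_n$ via the two layers of independence, the variance asymptotic through $\int_{\sqrt{\log n}}^{\log n}d\ell/(\ell\ln\ell)=\ln 2$, the split at $k^*=\log(\sqrt n/|s|)$ in the L\'evy argument, and the sub-Gaussian bound in (c) via $\sin^2(p_kt/2)\geq(p_kt/\pi)^2$ on the window $k\leq\log(\pi\sqrt n/|x|)$ are all the right mechanisms.

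For part (b) there are two problems. First, the displayed bound $\exp(-x\sqrt[4]{n})$ in the statement is a typo for $\exp(-c\sqrt[4]{n})$: compare \cref{thm:U_f_n_all_estimates}\ref{item_b:psi_n_estimate}, and note that the formula $d=c/\sqrt{\pi}$ only makes sense under the latter reading (it is exactly what one gets from $c\,n^{1/4}\geq d\sqrt{x}$ at the extremal $x=\pi\sqrt n$). Your target ``exponent $\gtrsim x\sqrt[4]{n}$'' inherits this typo and is in fact unreachable at the lower endpoint $x=n^{1/4}$: for $t=n^{-1/4}$ the indices $k$ with $p_kt\asymp 1$ sit near $k\asymp\tfrac14\log n$, and the usable exponent from that window is only $\asymp\sqrt n/(\log n\,\log\log n)$, comfortably $\gg n^{1/4}$ but far short of $x\sqrt[4]{n}=\sqrt n$. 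Second, even for the corrected target you do not actually carry out the argument: you name the right ingredients (doubling-map dwell time, the shift $p_{k+1}=2p_k+1$ that rescues the pair when $p_kt$ lands near $2\pi\Z$) but then defer the quantitative estimate to \cite{MR4374685}, which is precisely what the present paper does. So for (b) your proposal is a roadmap, not a proof.
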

For $n\in\N$, write $Z_n:=\mathsf{U}_n-\mathsf{U}(f,n)$ and set
\[
\varphi_n(t):=\mathbb{E}_m\left(\exp\left(itZ_n\right)\right)
\]
\begin{lemma}\label{lem:junk_disappear}
$Z_n$ converges in probability to $0$, and there exists $N\in\N$, such that for all $n>N$, we have
\[
\min_{|t|\leq \pi}\left|\varphi_n(t)\right|\geq \frac{1}{2}. 
\]
\end{lemma}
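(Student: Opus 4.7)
The plan is to use \cref{rem:change_of_U_term} to give a completely explicit formula for $Z_n$, estimate its $L^2$-norm using the properties of the sequence $\{\bar{f}_k\}$, and deduce both conclusions from an $L^2$ bound.

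First I would dispense with the trivial case. By \cref{rem:change_of_U_term}, if $\log(n-1)\notin 2\N$ then $Z_n\equiv 0$, so $\varphi_n\equiv 1$ and there is nothing to prove. Otherwise set $k=\log(n-1)\in 2\N$, so $n-1=2^k=p_k$ (since $k$ is even). In this case $Z_n=V_k$, and inspecting the definition of $V_k$ the summation index $i$ runs from $2^k$ to $n-1=2^k$, so the sum collapses to a single index $i=n-1$:
\[
Z_n = p_k\bigl(U^{n-1}\bar{f}_k-U^{d_k+n-1}\bar{f}_k\bigr)+p_{k+1}\bigl(U^{n-1}\bar{f}_{k+1}-U^{d_{k+1}+n-1}\bar{f}_{k+1}\bigr).
\]
Using properties \ref{item_b:bar_f} and \ref{item_c:bar_f} of the construction, the four summands are mutually independent (the two within level $k$ are independent by \ref{item_b:bar_f} since $d_k+n-1\leq 2d_k+p_k$, and those at level $k+1$ are independent of everything at level $k$ by \ref{item_c:bar_f}); they also have mean zero. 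Hence, since $\|\bar{f}_j\|_2^2=\alpha_j^2$,
\[
\|Z_n\|_2^2 \leq 2p_k^2\alpha_k^2+2p_{k+1}^2\alpha_{k+1}^2 \leq \frac{2}{k\log k}+\frac{2}{(k+1)\log(k+1)},
\]
because $p_j\alpha_j=1/\sqrt{j\log j}$ for $j\geq 2$.

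Since $k=\log(n-1)\to\infty$ as $n\to\infty$ (through the subsequence for which $Z_n\neq 0$), we conclude $\|Z_n\|_2\to 0$, which gives convergence in probability. For the second assertion, use the elementary bound $|1-e^{itx}|\leq |tx|$ to get
\[
|1-\varphi_n(t)|\leq \mathbb{E}\left|1-e^{itZ_n}\right|\leq |t|\,\mathbb{E}|Z_n|\leq |t|\,\|Z_n\|_2.
\]
Taking the supremum over $|t|\leq\pi$ gives $\sup_{|t|\leq\pi}|1-\varphi_n(t)|\leq \pi\|Z_n\|_2\to 0$, so for all sufficiently large $n$ we have $|\varphi_n(t)|\geq 1/2$ uniformly on $|t|\leq\pi$, as required.

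The only real wrinkle is bookkeeping: one has to be careful to verify that when $\log(n-1)=k\in 2\N$ the constraints $p_{k+1}\geq n$ (pushing $k$ out of $\hat{I}_n$) and $p_k<n<d_k$ (keeping $k$ in $I_n$) really hold, and to check the independence of the four pieces of $V_k$ from properties \ref{item_b:bar_f}--\ref{item_c:bar_f}. Once this is in place, the argument is entirely routine.
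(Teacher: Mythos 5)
Your proposal is correct and follows essentially the same route as the paper: identify $Z_n$ via \cref{rem:change_of_U_term} as the (at most one) leftover term $V_{\log(n-1)}$, bound $\|Z_n\|_2^2=O(1/\log n)$ from the values of $p_k\alpha_k$, and deduce both claims from this $L^2$ estimate. The only cosmetic difference is that you control $|1-\varphi_n(t)|$ by the first-moment bound $|t|\,\mathbb{E}|Z_n|\le |t|\,\|Z_n\|_2$, whereas the paper invokes the second-order Taylor expansion of the characteristic function; both are routine and equally valid.
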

\begin{proof}
By \cref{rem:change_of_U_term}, 
\[
Z_n=1_{\left[\log(n-1)\in 2\N\right]}V_{\log(n-1)}. 
\]
When $\log(n-1)\in 2\N$, we have $p_{\log(n-1)}=n-1$ and $p_{\log(n-1)+1}=n$. Consequently, writing $k=\log(n-1)$, we have
\[
V_{\log(n-1)}=(n-1)\left(U^{n-1}f_{k}-U^{d_{k}+n-1}f_{k}\right)+n\left(U^{n}f_{k+1}-U^{d_{k}+n}f_{k+1}\right).
\]
In this case, since $V_{\log(n-1)}$ is a sum of square integrable zero mean random variables, we deduce that
\[
\left\|V_{\log(n-1)}\right\|_2^2\leq 2\left((n-1)^2\alpha_{k}^2+n^2\alpha_{k+1}^2\right)\leq \frac{8}{\log(n)},
\]
where the last inequality holds since 
\[
n\alpha_{k+1}\leq (n-1)\alpha_k<\frac{n-1}{(n-1)\sqrt{\log(n-1)}}.
\]
A simple argument using Markov's inequality shows that $Z_n$ converges to $0$ in probability (hence to $\delta_0$ in distribution). 

As $\mathbb{E}_m\left(Z_n\right)=0$, it follows from \cite[Theorem 3.3.8 and formula (3.3.3)]{Durrett_2010} that for all $t\in [-\pi,\pi]$,
\[
\varphi_n(t)=1-\frac{t^2\left\|Z_n\right\|_2^2}{2}\pm t^2\left\|Z_n\right\|_2^2.
\]
The second claim is now a consequence of the second moment estimate since
\[
\sup_{|t|\leq \pi} t^2\left\|Z_n\right\|_2^2\leq \frac{8\pi}{\log(n)}\xrightarrow[n\to\infty]{} 0.    
\]
\end{proof}

Finally let $\psi_n :\R\to \mathbb{C}$ be defined by
\[
\psi_n(t):=\mathbb{E}_m\left(\exp\left(it\mathsf{U}(f,n)\right)\right)
\]

\begin{thm}\label{thm:U_f_n_all_estimates} \begin{enumerate}[label=(\alph*)]
\item\label{item_a:U_f_n} $\frac{1}{\sqrt{n}}\mathsf{U}(f,n)$ converges in distribution to a centered  normal random variable with variance $\sigma^2=2\left(\ln 2\right)^2$. 
\item\label{item_b:psi_n_estimate} There exists $N\in\N$ and $c>0$ such that for all $n>N$ and $\sqrt[4]{n}\leq x\leq \pi\sqrt{n}$,
\[
\left|\psi_n\left(\frac{x}{\sqrt{n}}\right)\right|\leq 2\exp\left(-c\sqrt[4]{n}\right)\leq 2\exp\left(-d\sqrt{|x|}\right). 
\]
where $d=\frac{c}{\sqrt{\pi}}$. 
\item\label{item_c:psi_n_estimate} There exists $N\in\N$ and $L>0$ such that for all $n>N$ and $|x|\leq \sqrt[4]{n}$,
\[
\psi_n\left(\frac{x}{\sqrt{n}}\right)\leq 2\exp\left(-Lx^2\right). 
\]
\end{enumerate}
\end{thm}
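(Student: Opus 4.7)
The backbone of my approach is the decomposition recalled in \cref{rem:change_of_U_term} and \cref{lem:junk_disappear}:
\[
\mathsf{U}_n \;=\; \mathsf{U}(f,n) + Z_n, \qquad Z_n \;=\; \mathbf{1}_{[\log(n-1)\in 2\N]}\, V_{\log(n-1)},
\]
together with a crucial observation: $\mathsf{U}(f,n)$ and $Z_n$ are \emph{independent}. Indeed, each $V_k$ depends only on the shifts of $\bar{f}_k$ and $\bar{f}_{k+1}$, so the collection of base indices appearing in $\mathsf{U}(f,n) = \sum_{k\in\hat{I}_n} V_k$ is $\hat{I}_n \cup (\hat{I}_n + 1)$. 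When $k_0 := \log(n-1) \in 2\N$, every $k \in \hat{I}_n$ satisfies $p_{k+1} < n = p_{k_0+1}$, which forces $k+1 \leq k_0 - 1 < k_0$; in particular neither $k_0$ nor $k_0+1$ belongs to $\hat{I}_n \cup (\hat{I}_n+1)$. Properties \ref{item_b:bar_f} and \ref{item_c:bar_f} of the base construction then yield the claimed independence, and consequently the characteristic functions factor:
\[
\phi_n(t) \;=\; \psi_n(t)\,\varphi_n(t), \qquad t \in \R.
\]

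With the factorization in hand, part \ref{item_a:U_f_n} follows at once from Slutsky's theorem: \cref{thm:steps_in_LCLT_for U_n}\ref{item_a:U_n_converge} gives $\mathsf{U}_n/\sqrt{n} \Rightarrow \mathcal{N}(0,\sigma^2)$, while \cref{lem:junk_disappear} shows $Z_n \to 0$ in probability, so a fortiori $Z_n/\sqrt{n} \to 0$ in probability. Writing $\mathsf{U}(f,n)/\sqrt{n} = \mathsf{U}_n/\sqrt{n} - Z_n/\sqrt{n}$ therefore inherits the same weak limit.

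Parts \ref{item_b:psi_n_estimate} and \ref{item_c:psi_n_estimate} are obtained by rearranging the factorization as $\psi_n(t) = \phi_n(t)/\varphi_n(t)$ on the set where $\varphi_n \neq 0$. The second conclusion of \cref{lem:junk_disappear} supplies the uniform lower bound $|\varphi_n(t)| \geq 1/2$ for all $|t| \leq \pi$ and all $n$ large enough. Substituting $t = x/\sqrt{n}$ (which lies in $[-\pi,\pi]$ in both regimes $|x| \leq \sqrt[4]{n}$ and $\sqrt[4]{n} \leq x \leq \pi\sqrt{n}$) yields
\[
\left|\psi_n\!\left(\tfrac{x}{\sqrt{n}}\right)\right| \;\leq\; 2\left|\phi_n\!\left(\tfrac{x}{\sqrt{n}}\right)\right|,
\]
and the estimates \cref{thm:steps_in_LCLT_for U_n}\ref{item_b:phi_n_estimate} and \ref{item_c:phi_n_estimate} on $|\phi_n|$ transfer immediately, producing exactly the inequalities claimed (the factor of $2$ is already present in the statements).

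The only non-routine step is the independence bookkeeping in the first paragraph; after that, everything else is a mechanical transfer of the known estimates on $\mathsf{U}_n$ to $\mathsf{U}(f,n)$ via the multiplicative characteristic-function identity. I do not anticipate any further obstacles.
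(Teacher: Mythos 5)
Your proposal is correct and follows essentially the same route as the paper: the identity $\mathsf{U}_n=\mathsf{U}(f,n)+Z_n$ with independence of the two summands, Slutsky's theorem for part (a), and the factorization $\phi_n=\psi_n\varphi_n$ combined with the lower bound $|\varphi_n|\geq\tfrac12$ from \cref{lem:junk_disappear} to transfer the estimates of \cref{thm:steps_in_LCLT_for U_n} to $\psi_n$. One small slip in your independence bookkeeping: when $k_0=\log(n-1)\in2\N$ one has $p_{k_0+1}=2^{k_0+1}+1=2n-1$, not $n$, but the needed conclusion still holds since $p_{k+1}<n=2^{k_0}+1$ forces $k+1\leq k_0$ and parity then excludes $k_0$ and $k_0+1$ from $\hat{I}_n\cup(\hat{I}_n+1)$.
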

\begin{proof}
Since $\mathsf{U}(f,n)=\mathsf{U}_n-Z_n$, we deduce \cref{item_a:U_f_n} from \cref{thm:steps_in_LCLT_for U_n}.\ref{item_a:U_n_converge} and \cref{lem:junk_disappear}. In order to prove \cref{item_b:psi_n_estimate} and \cref{item_c:psi_n_estimate}, note that $Z_n$ and $\mathsf{U}(f,n)$ are independent and $\mathsf{U}_n=\mathsf{U}(f,n)+Z_n$. This implies that  for all $|x|\leq \pi \sqrt{n}$, we have
\begin{equation}\label{eq:Yes!!!}
\phi_n\left(\frac{x}{\sqrt{n}}\right)=\psi_n\left(\frac{x}{\sqrt{n}}\right)\varphi_n\left(\frac{x}{\sqrt{n}}\right).
\end{equation}
By \cref{lem:junk_disappear} for all large $n$ we have
\begin{equation}\label{eq:Yes!!}
\sup_{|x|\leq \sqrt{n}\pi}\left|\varphi_n\left(\frac{x}{\sqrt{n}}\right)\right|\geq \frac{1}{2}.
\end{equation}
Items \ref{item_b:psi_n_estimate} and \ref{item_c:psi_n_estimate} follow from \eqref{eq:Yes!!!}, \eqref{eq:Yes!!}, \cref{thm:steps_in_LCLT_for U_n}.\ref{item_b:phi_n_estimate} and \cref{thm:steps_in_LCLT_for U_n}.\ref{item_c:phi_n_estimate}. 
\end{proof} Now we are ready to prove \cref{thm:LCLT_for_U}.

\begin{proof}[Proof of \cref{thm:LCLT_for_U}] Recall that $\psi_n(t)$ is the characteristic function of $\mathsf{U}(f,n)$. By Fourier inversion formula for $m\in \Z$ we get,
\[
\mu (\mathsf{U}(f,n) = m) = \dfrac{1}{2\pi} \int_{-\pi}^{\pi} \psi_n (t) e^{-itm} dt.
\] By change of variables $t = \dfrac{x}{\sqrt{n}}$ we get,
\[
\sqrt{n} \mu (\mathsf{U}(f,n) = m) = \dfrac{1}{2\pi} \int_{-\pi \, \sqrt{n}}^{\pi \, \sqrt{n}} \psi_n \bigg(\frac{x}{\sqrt{n}}\bigg) e^{-(ixm)/\sqrt{n}} dx.
\] Observe that,
\[
\dfrac{1}{\sqrt{2\pi\sigma^2}} e^{-m^2/2n \sigma^2} = \dfrac{1}{2\pi} \int_{\R} e^{-\sigma^2 x^2/2} \, e^{-(ixm)/\sqrt{n}} dx.
\] Hence we need to show
\[
\underset{m\in \Z}{\mathrm{sup}}\left\vert \dfrac{1}{2\pi} \int_{-\pi \, \sqrt{n}}^{\pi \, \sqrt{n}} \psi_n \bigg(\frac{x}{\sqrt{n}}\bigg) e^{-(ixm)/\sqrt{n}} dx - \dfrac{1}{2\pi} \int_{\R} e^{-\sigma^2 x^2/2} \, e^{-(ixm)/\sqrt{n}} dx  \right\vert \underset{n \rightarrow \infty}{\longrightarrow} 0.
\]Since, 
\[
\int_{|x| \geq \pi\,\sqrt{n}} e^{-\sigma^2 x^2/2} \, e^{-(ixm)/\sqrt{n}} dx \underset{n \rightarrow \infty}{\longrightarrow} 0,
\]
using triangular inequality, we need to show
\[
\int_{-\pi \, \sqrt{n}}^{\pi \, \sqrt{n}} \left\vert \psi_n \bigg(\frac{x}{\sqrt{n}}\bigg) - e^{-\sigma^2 x^2/2} \right\vert dx \underset{n \rightarrow \infty}{\longrightarrow} 0.
\]
By \cref{thm:U_f_n_all_estimates}.\ref{item_a:U_f_n}, $\frac{1}{\sqrt{n}} \mathsf{U}(f,n)$ converges in distribution to a normal law with $\sigma^2 = 2 (\mathrm{ln }\, 2)^2$. We use Levy's continuity theorem to deduce that that characteristic function converges pointwise. In other words for all $x \in \R$, 
\[
\Psi_n (x) : = \mathbf{1}_{[-\pi\,\sqrt{n}, \pi\,\sqrt{n}\,]} (x) \left\vert \psi_n \bigg(\frac{x}{\sqrt{n}}\bigg) - e^{-\sigma^2 x^2/2} \right\vert \underset{n \rightarrow \infty}{\longrightarrow} 0.
\] Note that by \cref{thm:U_f_n_all_estimates} items \ref{item_b:psi_n_estimate} and \ref{item_c:psi_n_estimate}, for large $n$ and $|x| \leq \pi\,\sqrt{n}$ the function $\Psi_n (x)$ is bounded by an integrable function, hence we can apply dominated convergence theorem to conclude
\[
\int_{-\pi \, \sqrt{n}}^{\pi \, \sqrt{n}} \left\vert \psi_n \bigg(\frac{x}{\sqrt{n}}\bigg) - e^{-\sigma^2 x^2/2} \right\vert dx = \int_{\R} \Psi_n (x) dx  \underset{n \rightarrow \infty}{\longrightarrow} 0.
\]
\end{proof}

\section{Proof of Proposition \ref{prop: Austin}}\label{Appendix C}
A probability preserving system $(X,\B,\mu,T)$ is \textbf{a Gaussian automorphism} if there exists $f\in L^2(X,\mu)$ such that
\begin{itemize}
\item $\int fd\mu=0$. 
\item The process $\left\{f\circ T^n\right\}_{n\in\Z}$ is Gaussian and the  
 $\sigma$-algebra generated by the functions $\left\{f\circ T^n\right\}_{n\in\Z}$ is $\B \mod \mu$. 
\end{itemize}
A Gaussian automorphism is spectrally determined and as a consequence for every $d\in\Z\setminus\{0\}$ it has a $d$'th root. That is there exists $S$ a probability preserving transformation of $(X,\B,\mu)$ such that $T=S^d$. In addition, when $T$ is an ergodic, zero-entropy, Gaussian automorphism then its $d$'th root is also an ergodic, zero-entropy, Gaussian automorphism,

\begin{proof}[Proof of \cref{prop: Austin}]
By the main theorem in \cite{2024arXiv240708630A}, there exists $S,T$ two ergodic, zero entropy Gaussian automorphisms of a probability space $(X,\B,\mu)$ and $f\in L^2(\mu)$ such that the averages 
\[
\mathbb{A}_n(f,f):=\sum_{k=0}^{n-1}f\circ T^nf\circ S^n
\]
do not converge in $L^2(\mu)$, As $S$ and $T$ are Gaussian automorphisms, there exists $R,Q$, two ergodic, zero-entropy, Gaussian automorphisms of $(X,\B,\mu)$ such that $R^c=T$ and $Q^d=S$. Since for all $n\in\N$,
\[
B_n(f,f):=\sum_{k=0}^{n-1}f\circ R^{cn} f\circ Q^{dn}=\sum_{k=0}^{n-1}f\circ T^nf\circ S^n,
\]
the averages $B_n(f,f)$ do not converge in $L^2(\mu)$.
\end{proof}

\bibliographystyle{alpha}
\bibliography{embedding}

\end{document}